\documentclass[paper=a4,headsepline,DIV13]{scrartcl}

\usepackage[utf8]{inputenc}
\usepackage[T1]{fontenc}
\usepackage{lmodern}
\usepackage{amsmath,amsfonts,amssymb,amsthm,mathtools}
\usepackage{graphicx,subfigure}
\usepackage{braket,dsfont,url,enumerate}
\usepackage{hyperref}
\usepackage{pgfplots}


\usepackage[textsize=small]{todonotes}
\setlength{\marginparwidth}{2cm}
\presetkeys{todonotes}{color=red!70}{}

\renewenvironment{abstract}{\minisec{Abstract}}{\par\vspace{.1in}}
\newenvironment{keywords}{\minisec{Key Words}}{\par\vspace{.1in}}
\newenvironment{AMS}{\minisec{AMS subject classification}}{\par\vspace{.1in}}

\theoremstyle{plain}
\newtheorem{theorem}{Theorem}[section]
\newtheorem{corollary}[theorem]{Corollary}
\newtheorem{lemma}[theorem]{Lemma}
\newtheorem{proposition}[theorem]{Proposition}

\theoremstyle{remark}
\newtheorem{remark}[theorem]{Remark}

\numberwithin{equation}{section}

\newcommand{\abs}[1]{\lvert#1\rvert}
\newcommand{\Abs}[1]{\left\lvert#1\right\rvert}
\newcommand{\norm}[1]{\lVert#1\rVert}

\newcommand{\R}{\mathds{R}}
\newcommand{\N}{\mathds{N}}
\newcommand{\HH}{\mathds{H}}

\newcommand{\PP}{\mathcal{P}}
\newcommand{\QQ}{\mathcal{Q}}
\newcommand{\TT}{\mathcal{T}}
\newcommand{\II}{\mathcal{I}}
\newcommand{\NO}{\mathcal{N}_\Om}
\newcommand{\NY}{\mathcal{N}_Y}
\newcommand{\NOY}{\mathcal{N}_{\Om,Y}}

\newcommand{\Om}{\Omega}

\newcommand{\V}{\mathfrak{v}}
\newcommand{\U}{\mathfrak{u}}
\newcommand{\F}{\mathfrak{f}}

\newcommand{\trO}{\operatorname{tr}_\Om}

\renewcommand{\phi}{\varphi}
\renewcommand{\epsilon}{\varepsilon}

\pagestyle{myheadings}


\begin{document}
\title{\boldmath{$hp$}-Finite Elements for Fractional Diffusion}

\author{%
Dominik Meidner\footnotemark[2]
\and
Johannes Pfefferer\footnotemark[2]
\and
Klemens Schürholz\footnotemark[2]
\and
Boris Vexler\footnotemark[2]
}

\markright{Meidner, Pfefferer, Schürholz, Vexler: $hp$-Finite Elements for Fractional Diffusion}

\maketitle

\renewcommand{\thefootnote}{\fnsymbol{footnote}}

\footnotetext[2]{Technical University of Munich, Department of Mathematics, Chair of Optimal
Control, Garching / Germany (meidner@ma.tum.de,
pfefferer@ma.tum.de, klemens.schuerholz@tum.de, vexler@ma.tum.de)}

\renewcommand{\thefootnote}{\arabic{footnote}}

\begin{abstract}
	The purpose of this work is to introduce and analyze a numerical scheme to efficiently solve boundary value problems involving the spectral fractional Laplacian. The approach is based on a reformulation of the problem posed on a semi-infinite cylinder in one more spatial dimension. After a suitable truncation of this cylinder, the resulting problem is discretized with linear finite elements in the original domain and with $hp$-finite elements in the extended direction. The proposed approach yields a drastic reduction of the computational complexity in terms of degrees of freedom and even has slightly improved convergence properties compared to a discretization using linear finite elements for both the original domain and the extended direction. The performance of the method is illustrated by numerical experiments.
\end{abstract}

\begin{keywords}
	Fractional Laplace operator, nonlocal operators, finite elements, $hp$-finite elements, discretization error estimates, anisotropic meshes
\end{keywords}

\begin{AMS}
	35S15, 65R20, 65N12, 65N30
\end{AMS}


\section{Introduction}
In this work, we are concerned with boundary value problems involving the fractional Laplacian,
being the prototype of a nonlocal operator. To be more specific: Let $\Omega\subset\R^d$ for
$d\in\set{1,2,3}$ be a bounded, convex, polygonal or polyhedral domain.  We are interested in the
solution of the boundary value problem
\begin{equation}\label{eq:Prob}
  \begin{aligned}
    (-\Delta)^s\U&=\F&\quad&\text{in }\Omega,\\
    \U&=0&\quad&\text{on }\partial\Omega,
  \end{aligned}
\end{equation}
where $(-\Delta)^s$ denotes the spectral fractional Laplacian of order $s\in(0,1)$ defined by the
eigenvalues and eigenfunctions of the standard Laplacian, precisely introduced in Section
\ref{sec:ContProb}. The main purpose of this paper is to introduce and analyze a numerical scheme
to efficiently solve problem~\eqref{eq:Prob}.

Our approach is based on the following
equivalent reformulation of problem \eqref{eq:Prob} posed on the semi-infinite cylinder
$\set{(x,y)\in\Omega\times (0,\infty)}\subset \R^{d+1}$: Let $u$ be the weak solution of the
extended problem
\[
  \begin{aligned}
    \operatorname{div}(y^{1-2s}\nabla u)&=0&\quad&\text{in } \Omega\times (0,\infty),\\
    u&=0&&\text{on }\partial\Omega\times[0,\infty),\\
    \partial_{\nu^{1-2s}} u&=d_s \F&&\text{on }\Omega\times\set{0},
  \end{aligned}
\]
see Section \ref{sec:ContProb} for more details.  Then, the trace $\U=u(\cdot,0)$ is the solution
of the fractional boundary value problem \eqref{eq:Prob}.

In contrast to the nonlocal problem \eqref{eq:Prob}, the extended problem is localized. However, a
direct application of a finite element method to the extended problem is not feasible because of the
semi-infinite domain. As remedy, the exponential decay of $u$ in direction $y$ towards infinity (see
Proposition \ref{prop:truncation}) can be employed such that a truncation of the semi-infinite
cylinder to $\Omega\times(0,Y)$ becomes reasonable. The extended problem posed on the truncated
cylinder can be discretized using finite elements. However, due to the degenerate/singular nature of
the extended problem, anisotropic meshes are favorable in order to obtain an optimally convergent
numerical scheme. Moreover, the height $Y$ of the truncated cylinder needs to be chosen dependent on
the mesh parameter to ensure the aforementioned convergence. This approach was already pursued in
\cite{Nochetto2015} using a discretization with first degree tensor product finite elements on graded meshes in the extended direction, see
also~\cite{chen2016multilevel,chen2015pde,nochetto2015convergence,nochetto2016pde} for related
results. If $h_\Om$ denotes the mesh parameter and $\NO$ the number of degrees of freedom in $\Om$
then the approach from \cite{Nochetto2015} yields a discretization error of order $O(h_\Om \abs{\ln h_\Om}^s)$ in the corresponding energy norm associated
with~\eqref{eq:Prob} while solving problems with
$O(\NO^{1+1/d})$ degrees of freedom.

In this work, we introduce and analyze a discretization of the
truncated problem  with linear finite elements in the original domain $\Omega$ and with $hp$-finite elements on a geometric mesh in the extended direction. This drastically reduces the computational complexity to $O(\NO(\ln\NO)^2)$
degrees of freedom and even yields a slightly better convergence rate of order $O(h_\Om)$.
Especially, when $\NO$ is large, the difference between the factor $(\ln\NO)^2$ and $\NO^{1/d}$
becomes clearly perceptible. For instance, in our numerical experiments we could reduce the number
of degrees of freedom by a factor of about $111$ to obtain an error of less than $9\cdot 10^{-3}$ in
the case $s=0.8$, see Section~\ref{sec:NumEx} for more details. We also notice that our approach and
results are not limited to the spectral fractional Laplacian. They naturally extend by only minor
modifications to fractional powers of general second order elliptic operators.

Let us briefly give an overview on other numerical approaches from the literature to solve boundary
value problems involving the fractional Laplacian: Due to the spectral definition of the operator, it seems to be
natural to compute an approximating, discrete spectral decomposition of the standard Laplacian in
order to get an approximation of the solution of \eqref{eq:Prob}, see
\cite{MR2252038,MR2300467,MR2800568}. However, this may result in solving a large number of discrete
eigenvalue problems. Another approach to determine an approximation to the solution of problem
\eqref{eq:Prob} is analyzed in \cite{bonito2015numerical1}, see also
\cite{bonito2017approximation,bonito2017numerical,bonito2015numerical2} for related results. In that
reference, $(-\Delta)^{-s}$ is represented in terms of Bochner integrals involving
$(I-t^2\Delta)^{-1}$ for $t\in(0,\infty)$. Subsequently, different quadrature formulas to
approximate this integral are analyzed which require multiple evaluations of
$(I-t_i^2\Delta_h)^{-1}$ with $t_i$ being a quadrature point and $-\Delta_h$ denoting a finite
element discretization to $-\Delta$. Numerical approaches for the integral definition of the
fractional Laplacian, which is not equivalent to the spectral definition considered in the present
paper, can be found in \cite{acosta2016short, acosta2017finite, acosta2017fractional,
acosta2016regularity, borthagaray2016finite, d2013fractional, guan2015theta, guan2017analysis,
huang2014numerical}.

This paper is organized as follows: In Section \ref{sec:ContProb}, we state the definition of the
fractional Laplacian, formulate the extended problem in detail, and introduce the functional
framework needed for the subsequent error analysis. Moreover, in this section, we are concerned with
several properties of the solution of the extended problem such as a series representation and
corresponding regularity results. The discrete, extended problem posed on the truncated cylinder is formulated at the beginning of Section
\ref{sec:Disc}. In the extended direction, we distinguish between graded meshes and $h$-FEM, and geometric meshes and $hp$-FEM, see Sections \ref{sec:lin_def} and \ref{sec:p_def}. The error analysis is given in Section \ref{sec:ErrorEst}. Thereby, in Section~\ref{sec:lin}, we mainly recover the results of~\cite{Nochetto2015}. The
reason for doing this is twofold. First, we are able to slightly improve the mesh grading condition used in~\cite{Nochetto2015}. However, the main
reason to analyze the $h$-FEM on graded meshes before developing the analysis for the $hp$-method considered in Section~\ref{sec:p} is, that the techniques we use are almost identical for both cases, but the details are simpler for $h$-FEM. Implementation aspects and numerical experiments, which underline the efficiency of our approach, are presented in Section \ref{sec:NumEx}. In the appendix, we collect different results for special functions defined by the modified Bessel functions of second kind. These are especially needed in Section \ref{sec:ContProb} for the discussion of the solution of the extended problem.

Finally, we notice that, in the following, $c$ denotes a generic constant which will always be
independent of the mesh
parameter $h_\Omega$ when we analyze the discretization error.

\section{Continuous Problem}\label{sec:ContProb}

Let $-\Delta$ be the $L^2(\Omega)$ realization of the Laplacian with homogeneous Dirichlet boundary
conditions.  It is well-known that $-\Delta$ has a compact resolvent and its eigenvalues form a
non-decreasing sequence $0<\lambda_1\le\lambda_2\le\cdots\le\lambda_k\le\cdots$ satisfying
$\lim_{k\to\infty}\lambda_k=\infty$. We denote by $\varphi_k$ the  orthonormal $L^2(\Om)$
eigenfunctions associated with $\lambda_k$ fulfilling
\[
  \int_{\Omega} \nabla \varphi_{k} \cdot \nabla v\,dx=\lambda_k\int_{\Omega}\varphi_{k}v\,dx \quad \forall v\in H^1_0(\Omega).
\]
For any $s\geq0$, we introduce the fractional order Sobolev space
\[
  \HH^s(\Omega)=\Set{\V\in L^2(\Omega) | \norm{\V}_{\HH^s(\Omega)}^2=\sum_{k=1}^\infty
  \lambda_k^s\V_k^2<\infty \text{ with } \V_k=\int_{\Omega}\V\varphi_k\,dx}.
\]
Moreover, we denote by $\HH^{-s}(\Om)$ the dual space of $\HH^s(\Om)$. Then, the spectral fractional
Laplacian is defined for $s\in(0,1)$ on the space $\HH^s(\Omega)$ as the limit
\[
(-\Delta)^s\U=\sum_{k=1}^\infty\lambda_k^s\U_k\varphi_k\in\HH^{-s}(\Om)\quad \text{with}\quad
\U_k=\int_{\Omega}\U\varphi_k\,dx.
\]
Due to the Cauchy-criterion the limit exists for any $\U\in \HH^s(\Om)$. Thus, problem \eqref{eq:Prob} has to be understood as: Given $\F\in\HH^s(\Omega)$, find $\U\in \HH^s(\Om)$ such that
\begin{equation}\label{eq:Probweak}
	\sum_{k=1}^\infty\lambda_k^s\U_k\V_k=\int_\Om \F\V\,dx\quad \forall \V\in\HH^s(\Om)\quad \text{with}\quad
	\V_k=\int_{\Omega}\V\varphi_k\,dx.
\end{equation}
\begin{proposition}\label{prop:SolUF}
  For any $\F\in\HH^{-s}(\Om)$, problem~\eqref{eq:Prob} admits a unique solution $\U\in\HH^s(\Om)$
  fulfilling $\norm{\U}_{\HH^s(\Om)}=\norm{\F}_{\HH^{-s}(\Om)}$. Moreover, there is the series representation
  \[
    \U=\sum_{k=1}^\infty\U_k\phi_k \quad\text{with}\quad \U_k=\lambda_k^{-s}\F_k\quad\text{and}\quad \F_k=\int_{\Omega}\F\varphi_k\,dx.
  \]
\end{proposition}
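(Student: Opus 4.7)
The plan is to interpret equation~\eqref{eq:Probweak} as the variational characterization of the Riesz representer of $\F$ inside the Hilbert space $\HH^s(\Om)$.

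First, I would observe that the definition of $\norm{\cdot}_{\HH^s(\Om)}$ given in the excerpt arises from the inner product $(\V,\W)_{\HH^s(\Om)}=\sum_{k=1}^\infty \lambda_k^s \V_k \W_k$, and that $(\HH^s(\Om),(\cdot,\cdot)_{\HH^s(\Om)})$ is a Hilbert space: this follows from identifying $\HH^s(\Om)$ isometrically with the weighted sequence space $\ell^2_{\lambda^s}$ via the map $\V\mapsto(\V_k)_k$, using that $\{\phi_k\}$ is a complete $L^2$-orthonormal system. Under this identification, the left-hand side of~\eqref{eq:Probweak} is exactly the $\HH^s(\Om)$ inner product of $\U$ and $\V$. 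Correspondingly, the $\HH^{-s}(\Om)$-norm is given by $\norm{\F}_{\HH^{-s}(\Om)}^2=\sum_{k=1}^\infty \lambda_k^{-s}\F_k^2$, where $\F_k=\langle\F,\phi_k\rangle_{\HH^{-s}(\Om),\HH^s(\Om)}$ (which agrees with $\int_\Om \F\phi_k\,dx$ whenever $\F\in L^2(\Om)$).

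Next, for $\F\in\HH^{-s}(\Om)$, the right-hand side $\V\mapsto\langle\F,\V\rangle$ is by definition a bounded linear functional on $\HH^s(\Om)$ of norm $\norm{\F}_{\HH^{-s}(\Om)}$. The Riesz representation theorem therefore yields a unique $\U\in\HH^s(\Om)$ with $(\U,\V)_{\HH^s(\Om)}=\langle\F,\V\rangle$ for every $\V\in\HH^s(\Om)$, and $\norm{\U}_{\HH^s(\Om)}=\norm{\F}_{\HH^{-s}(\Om)}$. Rewriting this equation in terms of the coefficients gives precisely~\eqref{eq:Probweak}, so existence, uniqueness and the isometry statement are established.

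For the series representation, I would test~\eqref{eq:Probweak} with $\V=\phi_j$. Since $(\phi_j)_k=\delta_{jk}$ by $L^2$-orthonormality of the eigenfunctions, the left-hand side collapses to $\lambda_j^s\U_j$ while the right-hand side equals $\F_j$; hence $\U_j=\lambda_j^{-s}\F_j$. It remains to check that $\sum_{k=1}^\infty \U_k\phi_k$ converges to $\U$ in $\HH^s(\Om)$, but this is immediate from $\sum_k\lambda_k^s\U_k^2=\sum_k\lambda_k^{-s}\F_k^2=\norm{\F}_{\HH^{-s}(\Om)}^2<\infty$ together with the completeness of $\{\phi_k\}$ in $\HH^s(\Om)$. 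No step presents a genuine obstacle; the only point that needs a bit of care is the precise interpretation of $\F_k$ as a duality pairing when $\F\notin L^2(\Om)$, which is handled by the identification of $\HH^{-s}(\Om)$ with $\ell^2_{\lambda^{-s}}$ above.
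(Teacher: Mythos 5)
Your proposal is correct and follows the same route as the paper: existence, uniqueness and the norm identity via the Riesz representation theorem in the Hilbert space $\HH^s(\Om)$, and the series representation by testing~\eqref{eq:Probweak} with the eigenfunctions and using their orthonormality. You merely supply more detail (the $\ell^2$-identification, the duality interpretation of $\F_k$, and the convergence of the series), all of which is sound.
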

\begin{proof}
	The existence of an unique solution and the equality of the norms is a consequence of the Riesz representation theorem. The series representation of $\U$ is obtained by testing \eqref{eq:Probweak} with $\varphi_{m}\in \HH^s(\Om)$ and using the orthogonality of the eigenfunctions.
\end{proof}
\begin{remark}
  Due to the definition of the fractional Laplacian and the previous result, we observe that problem
  \eqref{eq:Prob} is already meaningful without additionally imposing the homogeneous Dirichlet
  boundary conditions since these are already included in the definition of the operator. Moreover,
  we notice that the regularity of $\U$ can be described in classical fractional Sobolev spaces as
  well, since
  \[
    \HH^s(\Omega)=
    \begin{cases}
      H^s(\Omega),&\text{for } 0<s<\frac 12,\\
      H_{00}^{\frac 12}(\Omega),&\text{for } s=\frac 12,\\
      H_0^s(\Omega),&\text{for }\frac 12<s<1.
    \end{cases}
  \]
  For more details we refer to, e.g.,~\cite{Nochetto2015}.
\end{remark}

Problem \eqref{eq:Prob} can equivalently be posed on a semi-infinite cylinder. In $\R^d$, this is
due to Caffarelli and Silvestre \cite{Caffarelli2007}. The restriction to bounded domains $\Omega$
was considered by Stinga and Torrea in \cite{Stinga2010}, see also \cite{Cabre2010,Capella2011}.
This kind of extension is the basis for the computational approaches in the subsequent sections.

In order to state the extended problem, we first introduce the required notation. We denote by
$C=\Omega \times (0,\infty)$ the aforementioned semi-infinite cylinder and by $\partial_LC =
\partial\Omega \times [0,\infty)$ its lateral boundary. We also need to define a truncated cylinder:
for $Y > 0$, the truncated cylinder is given by $C_Y=\Om\times(0,Y)$ with its lateral boundary
$\partial_LC_Y = \partial\Omega \times [0,Y]$. As $C$ and $C_Y$ are objects in $\R^{d+1}$, we use
$y$ to denote the extended variable, such that a vector $(x,y) \in \R^{d+1}$ admits the
representation $(x,y) = (x_1, x_2,\dots, x_d, y)$. Similarly, the gradient in $\R^{d+1}$ has the
representation
$\nabla=(\nabla_x,\partial_y)=(\partial_{x_1},\partial_{x_2},\dots,\partial_{x_d},\partial_y)$.

Next we introduce weighted Sobolev spaces with a weight function $y^\alpha$ for $\alpha \in (-1,1)$.
In this regard, let $D\subset \R^d\times(0,\infty)$ be an open set, such as $C$ or $C_Y$. Then, we
define the weighted space $L^2(D,y^\alpha)$ as the space of all measurable functions on $D$ with
finite norm $\norm{v}_{L^2(D,y^\alpha)}=\norm{y^{\frac\alpha2}v}_{L^2(D)}$. Similarly, the space
$H^1(D,y^\alpha)$ denotes the space of all functions $v\in L^2(D,y^\alpha)$ whose weak derivatives
of first order belong to $L^2(D,y^\alpha)$.

To study the extended problems, we introduce the space
\[
  \mathring{H}^1_L(C,y^\alpha)=\Set{v\in H^1(y^\alpha,C) | v=0\text{ on } \partial_LC}.
\]
The space $\mathring{H}^1_L(C_Y,y^\alpha)$ is defined analogously, but endowed with zero Dirichlet
boundary conditions also on $\Om\times\set{Y}$:
\[
  \mathring{H}^1_L(C_Y,y^\alpha)=\Set{v\in H^1(y^\alpha,C_Y) | v=0\text{ on }
  \partial_LC_Y\cup (\Omega\times \set{Y})}.
\]

For $v\in \mathring{H}^1_L(C,y^\alpha)$, we denote by $\trO v$ the trace of $v$ onto
$\Om\times\set{0}$, i.e., $\trO v=v(\cdot ,0)$. 
\begin{proposition}\label{prop:trace}
  For $\alpha=1-2s$, it holds
  \[
    \trO \mathring{H}^1_L(C,y^\alpha)=\HH^s(\Om)\quad\text{and}\quad
    \norm{\trO v}_{\HH^s(\Om)}\le c\norm{v}_{\mathring{H}^1_L(C,y^\alpha)}.
  \]
\end{proposition}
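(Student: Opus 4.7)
The plan is to prove both the trace bound and the surjectivity of $\trO$ via the spectral expansion in the $\Om$-variable, reducing everything to a scalar trace problem on $(0,\infty)$ with weight $y^\alpha$.

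First I would fix $v\in \mathring{H}^1_L(C,y^\alpha)$ and expand it in the $L^2(\Om)$ eigenbasis:
\[
  v(x,y)=\sum_{k=1}^\infty v_k(y)\phi_k(x),\qquad v_k(y)=\int_\Om v(x,y)\phi_k(x)\,dx.
\]
Using the orthonormality of $\set{\phi_k}$ in $L^2(\Om)$ together with the eigenvalue relation $\int_\Om\abs{\nabla_x\phi_k}^2\,dx=\lambda_k$ and Parseval, the norm splits as
\[
  \Norm{v}_{\mathring H^1_L(C,y^\alpha)}^2
  =\sum_{k=1}^\infty\int_0^\infty y^\alpha\bigl(\lambda_k\,v_k(y)^2+\abs{v_k'(y)}^2\bigr)\,dy.
\]
In particular each scalar coefficient $v_k$ lies in the weighted space $H^1((0,\infty),y^\alpha)$, hence admits a pointwise value $v_k(0)$, and $\trO v=\sum_k v_k(0)\phi_k$ in $L^2(\Om)$.

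The key step is a scalar weighted trace inequality: for every $w\in H^1((0,\infty),y^\alpha)$ and every $\lambda>0$,
\[
  \lambda^s\,w(0)^2\le c\int_0^\infty y^\alpha\bigl(\lambda\,w(y)^2+\abs{w'(y)}^2\bigr)\,dy,
\]
with $c$ independent of $\lambda$. I would prove this by rescaling $y=t/\sqrt\lambda$; both terms on the right scale by $\lambda^{(1-\alpha)/2}=\lambda^s$, reducing the inequality to the case $\lambda=1$, which is the standard one-dimensional trace estimate for $H^1((0,\infty),y^\alpha)$ into $\R$ (provable directly by writing $w(0)^2=-\int_0^\infty (\eta(y)w(y)^2)'\,dy$ for a fixed smooth cutoff $\eta$ with $\eta(0)=1$ and $\eta\equiv 0$ for $y\ge 1$, then applying Cauchy–Schwarz and using $\alpha\in(-1,1)$ so that $\int_0^1 y^{-\alpha}\,dy<\infty$). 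Summing this inequality over $k$ against $v_k$ and invoking the Parseval identity for $\HH^s(\Om)$ yields the claimed trace bound and, in particular, the inclusion $\trO\mathring H^1_L(C,y^\alpha)\subset \HH^s(\Om)$.

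For the reverse inclusion I would construct a bounded right inverse. Given $\U=\sum_k\U_k\phi_k\in\HH^s(\Om)$, set
\[
  v(x,y)=\sum_{k=1}^\infty\U_k\,\psi\bigl(\sqrt{\lambda_k}\,y\bigr)\phi_k(x),
\]
where $\psi$ is the minimizer of $\int_0^\infty t^\alpha(\abs{\psi'}^2+\psi^2)\,dt$ subject to $\psi(0)=1$; this $\psi$ decays at infinity and is expressible through modified Bessel functions of the second kind, as collected in the appendix. The Euler–Lagrange equation together with the rescaling $y=t/\sqrt{\lambda_k}$ gives
\[
  \int_0^\infty y^\alpha\bigl(\lambda_k\psi(\sqrt{\lambda_k}y)^2+\abs{\tfrac{d}{dy}\psi(\sqrt{\lambda_k}y)}^2\bigr)\,dy
  =c_\psi\,\lambda_k^s
\]
with $c_\psi$ independent of $k$. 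Summing against $\U_k^2$ and using $\U\in\HH^s(\Om)$ shows $v\in\mathring H^1_L(C,y^\alpha)$ with $\trO v=\U$, completing the proof.

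The main obstacle is the uniform-in-$\lambda$ scalar trace estimate in the weighted space; the rest is bookkeeping with the spectral decomposition. The degenerate/singular weight $y^\alpha$ requires that the cutoff argument genuinely use $\alpha\in(-1,1)$, which is exactly the range covered by $s\in(0,1)$.
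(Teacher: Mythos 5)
The paper offers no proof of this proposition: it simply refers to Cabr\'e--Sol\`a-Morales (Prop.~1.8) for $s=\frac12$ and Capella et al.\ (Prop.~2.1) for $s\neq\frac12$. Your argument is therefore a genuine self-contained proof where the paper gives only a citation, and it is essentially the standard one from those references: diagonalize in the eigenbasis of $-\Delta$, reduce to a one-dimensional weighted trace inequality whose $\lambda$-dependence is extracted by the rescaling $y\mapsto y/\sqrt\lambda$ (which produces exactly the factor $\lambda^{(1-\alpha)/2}=\lambda^s$ in both terms of the energy), and obtain surjectivity from the explicit Bessel-type extension --- the same profile $\psi_s(\sqrt{\lambda_k}\,y)$ that the paper itself uses in Proposition~\ref{prop:series}. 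The scaling computation is correct, the extension has energy $c_\psi\lambda_k^s\U_k^2$ per mode, and summing against Parseval gives both inclusions; in fact for surjectivity you do not even need the minimizer, since any fixed profile with $\psi(0)=1$ and finite weighted $H^1((0,\infty),t^\alpha)$ norm yields the same $\lambda_k^s$ scaling.

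One step needs repair. In your parenthetical proof of the scalar inequality at $\lambda=1$ you write $w(0)^2=-\int_0^\infty(\eta w^2)'\,dy$ and then invoke Cauchy--Schwarz together with $\int_0^1 y^{-\alpha}\,dy<\infty$; but the resulting terms $\int_0^1\eta'w^2\,dy$ and $\int_0^1\eta\,w\,w'\,dy$ are not controlled by $\int_0^1 y^\alpha\bigl(w^2+\abs{w'}^2\bigr)\,dy$ alone when $\alpha>0$ (i.e.\ $s<\frac12$), because $\int_0^1 w^2\,dy$ is not dominated by $\int_0^1 y^\alpha w^2\,dy$ for a degenerate weight. A clean fix: for a.e.\ $y\in(0,1)$ write $\abs{w(0)}\le\abs{w(y)}+\int_0^y\abs{w'(t)}\,dt\le\abs{w(y)}+\bigl(\int_0^1 t^{-\alpha}\,dt\bigr)^{1/2}\bigl(\int_0^1 t^\alpha\abs{w'(t)}^2\,dt\bigr)^{1/2}$, then multiply by $y^\alpha$ and integrate over $(0,1)$; this uses only $\alpha\in(-1,1)$ and gives the uniform constant you need before rescaling. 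You should also record (or cite) the density of smooth functions in the $A_2$-weighted space $\mathring{H}^1_L(C,y^\alpha)$ so that the identification $\trO v=\sum_k v_k(0)\phi_k$ is legitimate; with these two points supplied, your proof is complete.
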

\begin{proof}
  See~\cite[Proposition 1.8]{Cabre2010} for $s=\frac12$ and~\cite[Proposition 2.1]{Capella2011} for
  $s\neq\frac12$.
\end{proof}

Now, we are able to state the extended problem: Given $\F\in \HH^{-s}(\Omega)$, find $u\in
\mathring{H}^1_L(C,y^\alpha)$ such that
\begin{equation}\label{eq:weakExtension}
  \int_{C}y^\alpha \nabla u \cdot \nabla v \, d(x,y)= d_s\langle \F, \trO v\rangle_{\HH^{-s}(\Om),\HH^s(\Om)} \quad
  \forall v\in \mathring{H}^1_L(C,y^\alpha)
\end{equation}
with $\alpha=1-2s$ and $d_s=2^\alpha \frac{\Gamma(1-s)}{\Gamma(s)}$.  That is, the function $u\in
\mathring{H}^1_L(C,y^\alpha)$ is a weak solution of
\[
  \begin{aligned}
    \operatorname{div}(y^\alpha\nabla u)&=0&\quad&\text{in } C,\\
    u&=0&&\text{on }\partial_L C,\\
    \partial_{\nu^\alpha} u&=d_s \F&&\text{on }\Omega\times\set{0},
  \end{aligned}
\]
where we have set $\partial_{\nu^\alpha} u(x,0)=\lim_{y\to 0}y^\alpha \partial_yu(x,y)$. Note that
subsequently, the parameter $\alpha$ will always be equal to $1-2s$ for the considered $s\in(0,1)$.

In the remainder of this section, we discuss several properties of the solution $u$ to
\eqref{eq:weakExtension}.  Due to the following proposition, it is reasonable to determine the
solution $u$ of \eqref{eq:weakExtension} in order to get the solution $\U$ of \eqref{eq:Prob}.
\begin{proposition}\label{prop:equi}
  For $\F\in\HH^{-s}(\Om)$, the extended problem~\eqref{eq:weakExtension} admits a unique solution $u\in
  \mathring{H}^1_L(C,y^\alpha)$. Furthermore, $\U=\trO u\in\HH^s(\Om)$ solves~\eqref{eq:Prob}.
\end{proposition}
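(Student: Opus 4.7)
My plan is to prove the two claims of the proposition in sequence. For existence and uniqueness of $u \in \mathring{H}^1_L(C,y^\alpha)$, I would invoke the Lax-Milgram lemma on the bilinear form
\[
a(u,v) = \int_{C} y^\alpha \nabla u \cdot \nabla v\, d(x,y).
\]
Boundedness of $a$ follows from Cauchy-Schwarz in $L^2(C,y^\alpha)$, and coercivity on $\mathring{H}^1_L(C,y^\alpha)$ follows from a weighted Poincaré inequality available because of the homogeneous Dirichlet condition on $\partial_L C$. Continuity of the linear form $v \mapsto d_s\langle \F, \trO v\rangle_{\HH^{-s}(\Om),\HH^{s}(\Om)}$ is immediate from Proposition \ref{prop:trace}.

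For the identification $\U = \trO u$ as the solution of \eqref{eq:Prob}, I would work in the eigenbasis $\set{\phi_k}$. For each $k$, let $\psi_k$ be the unique solution of the ODE $-(y^\alpha\psi_k')' + \lambda_k y^\alpha \psi_k = 0$ on $(0,\infty)$ satisfying $\psi_k(0)=1$ and decaying at infinity; these are rescalings of the modified Bessel function of the second kind $K_s$, and the appendix provides the crucial boundary identity $\lim_{y\to 0^+} y^\alpha \psi_k'(y) = -d_s \lambda_k^s$ together with the integrability of $\psi_k$ and $\psi_k'$ against $y^\alpha$. My candidate for $u$ is
\[
u(x,y) = \sum_{k=1}^\infty \lambda_k^{-s} \F_k\, \phi_k(x)\, \psi_k(y).
\]
I would first verify that this series belongs to $\mathring{H}^1_L(C,y^\alpha)$, with norm controlled by $\norm{\F}_{\HH^{-s}(\Om)}$, using the mode-wise weighted $y$-integrals of $\psi_k^2$, $\lambda_k\psi_k^2$ and $(\psi_k')^2$ from the appendix. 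Then, testing with arbitrary $v \in \mathring{H}^1_L(C,y^\alpha)$ expanded as $v(\cdot,y) = \sum_k v_k(y)\phi_k(x)$ and integrating by parts in $y$ mode by mode, the ODE kills the bulk term and the boundary identity yields
\[
a(u,v) = d_s \sum_{k=1}^\infty \F_k v_k(0) = d_s \langle \F, \trO v\rangle_{\HH^{-s}(\Om),\HH^s(\Om)},
\]
so $u$ satisfies \eqref{eq:weakExtension} and, by the uniqueness obtained in the first step, coincides with the Lax-Milgram solution. Taking the trace gives $\trO u = \sum_k \lambda_k^{-s}\F_k \phi_k$, which is exactly the series representation of $\U$ furnished by Proposition \ref{prop:SolUF}, completing the proof.

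The main technical obstacle is justifying the term-by-term integration by parts in $y$ and the interchange of the sum over $k$ with the integration; this hinges on sharp control of $\psi_k$ and $\psi_k'$ (their behavior as $y\to 0^+$ and exponential decay as $y\to\infty$), which is exactly the content of the appendix on modified Bessel functions. Once those estimates are taken as a black box the argument is routine, but without them even the well-definedness of the candidate series in $\mathring{H}^1_L(C,y^\alpha)$ and of the boundary trace identity is unclear.
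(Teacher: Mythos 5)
Your proposal is correct in outline, but note that the paper does not actually prove this proposition: its ``proof'' is a one-line citation of \cite[Lemma 2.1]{Capella2011} (with \cite{Cabre2010} covering $s=\tfrac12$). What you have written is essentially a self-contained reconstruction of the argument in that reference: Lax--Milgram (with the weighted Poincar\'e inequality coming from the lateral Dirichlet condition, slicewise in $x$) for existence and uniqueness, then separation of variables, the Bessel-type ODE $-(y^\alpha\psi_{s,k}')'+\lambda_k y^\alpha\psi_{s,k}=0$, and the conormal limit to identify the trace. Two small points of calibration against the paper. First, the appendix does \emph{not} state the boundary identity $\lim_{y\to0^+}y^\alpha\psi_{s,k}'(y)=-d_s\lambda_k^s$; you have to derive it yourself from \eqref{eq:SolutionRegularity_representation_baseCase} (which gives $\psi_s'(z)=-c_sz^sK_{1-s}(z)$ after using $K_{s-1}=K_{1-s}$) together with the small-argument asymptotics $K_{1-s}(z)\sim \Gamma(1-s)2^{-s}z^{s-1}$ quoted in the proof of Lemma \ref{lem:psi-0}; the constants then combine to exactly $d_s=2^{1-2s}\Gamma(1-s)/\Gamma(s)$, which is the reason for the normalization of $d_s$. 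Second, in the final step $a(u,v)=d_s\sum_k\F_kv_k(0)$ you need $\sum_k\lambda_k^sv_k(0)^2\le c\norm{v}^2_{\mathring{H}^1_L(C,y^\alpha)}$ to recognize the sum as the duality pairing; that is precisely Proposition \ref{prop:trace}, so it is available, but it should be invoked explicitly rather than left implicit. With those two items filled in, your argument buys a self-contained proof where the paper relies on the literature, and it meshes cleanly with Propositions \ref{prop:SolUF} and \ref{prop:series}, whose series representations you recover along the way.
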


\begin{proof}
  See~\cite[Lemma 2.1]{Capella2011}.
\end{proof}

We have the following regularity result.
\begin{proposition}\label{prop:regU}
  Let $u\in \mathring{H}^1_L(C,y^\alpha)$ be the solution of~\eqref{eq:weakExtension} and
  $\F\in\HH^{1-s}(\Om)$. Then, it holds
  \[
    \norm{y^{\frac\alpha2}\nabla_x^2u}_{L^2(C)}^2+\norm{y^{\frac\alpha2}\partial_y\nabla_xu}_{L^2(C)}^2\le
    c\norm{\F}_{\HH^{1-s}(\Om)}^2.
  \]
\end{proposition}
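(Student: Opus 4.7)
The plan is to exploit the series representation of $u$ derived from separation of variables, reduce each norm to a one-dimensional weighted integral in $y$ times an $L^2(\Om)$ norm of derivatives of the eigenfunctions $\phi_k$, and then use the scaling behavior of the $y$-profile together with elliptic regularity of $\phi_k$.

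First I would write $u(x,y)=\sum_{k=1}^\infty \U_k \phi_k(x)\psi_k(y)$, where $\U_k=\lambda_k^{-s}\F_k$ by Proposition~\ref{prop:SolUF}, and $\psi_k$ is the (normalized) solution of the one-dimensional problem $-(y^\alpha \psi_k')'+\lambda_k y^\alpha\psi_k=0$ with $\psi_k(0)=1$ and $\psi_k(\infty)=0$. Up to a normalization constant, $\psi_k(y)=\tilde\psi(\sqrt{\lambda_k}\,y)$ is expressed in terms of the modified Bessel function $K_s$, which is precisely the kind of profile the appendix of the paper collects estimates for. The two quantities we need are the universal integrals
\begin{equation*}
	I_0:=\int_0^\infty t^\alpha|\tilde\psi(t)|^2\,dt,\qquad I_1:=\int_0^\infty t^\alpha|\tilde\psi'(t)|^2\,dt,
\end{equation*}
both of which are finite by the decay of $K_s$ at infinity and its behavior at zero (with $\alpha=1-2s\in(-1,1)$). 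Substituting $t=\sqrt{\lambda_k}\,y$ then yields the scaling
\begin{equation*}
	\int_0^\infty y^\alpha|\psi_k(y)|^2\,dy=\lambda_k^{s-1}I_0,\qquad \int_0^\infty y^\alpha|\psi_k'(y)|^2\,dy=\lambda_k^{s}I_1.
\end{equation*}

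Next I would combine this with the $L^2(\Om)$-orthogonality of the $\phi_k$. The terms of the expansion decouple and give, after Fubini,
\begin{equation*}
	\|y^{\alpha/2}\nabla_x^2 u\|_{L^2(C)}^2=\sum_{k=1}^\infty \U_k^2\,\|\nabla_x^2\phi_k\|_{L^2(\Om)}^2\,\lambda_k^{s-1}I_0
\end{equation*}
and
\begin{equation*}
	\|y^{\alpha/2}\partial_y\nabla_x u\|_{L^2(C)}^2=\sum_{k=1}^\infty \U_k^2\,\|\nabla_x\phi_k\|_{L^2(\Om)}^2\,\lambda_k^{s}I_1.
\end{equation*}
Here I use $\|\nabla_x\phi_k\|_{L^2(\Om)}^2=\lambda_k$ from the eigenvalue equation, and the convexity of $\Om$ together with standard $H^2$-regularity for Dirichlet Laplacians on convex polygonal/polyhedral domains to obtain $\|\nabla_x^2\phi_k\|_{L^2(\Om)}\le c\|\Delta\phi_k\|_{L^2(\Om)}=c\lambda_k$.

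Plugging in these bounds and $\U_k^2=\lambda_k^{-2s}\F_k^2$, both terms collapse to a constant times
\begin{equation*}
	\sum_{k=1}^\infty \lambda_k^{-2s}\F_k^2\,\lambda_k^2\,\lambda_k^{s-1}=\sum_{k=1}^\infty \lambda_k^{1-s}\F_k^2=\|\F\|_{\HH^{1-s}(\Om)}^2,
\end{equation*}
and analogously for the second term. The main obstacle, and the reason the appendix is needed, is verifying the finiteness (and numerical value) of the integrals $I_0,I_1$: near $y=0$ the profile $\tilde\psi$ behaves like $1+O(y^{2s})$ (or like $y^{2s}$ for the derivative), so integrability of $y^\alpha|\tilde\psi'|^2$ at the origin is borderline and needs the sharper expansions of $K_s$; at infinity the exponential decay of $K_s$ makes integrability routine. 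Once $I_0,I_1<\infty$ is in hand, the rest is the orthogonality computation sketched above.
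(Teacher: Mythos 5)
Your route is genuinely different from the paper's: the paper disposes of this proposition in one line by citing \cite[Theorem 2.7]{Nochetto2015}, which gives the identity $\norm{y^{\frac\alpha2}\Delta_xu}_{L^2(C)}^2+\norm{y^{\frac\alpha2}\partial_y\nabla_xu}_{L^2(C)}^2=d_s\norm{\F}_{\HH^{1-s}(\Om)}^2$, and then invokes convexity of $\Om$ to pass from $\Delta_x u$ to $\nabla_x^2 u$. What you propose is essentially a self-contained reproof of that cited identity via the series representation (Proposition~\ref{prop:series}), the scaling $\int_0^\infty y^\alpha\abs{\psi_{s,k}}^2\,dy=\lambda_k^{s-1}I_0$, $\int_0^\infty y^\alpha\abs{\psi_{s,k}'}^2\,dy=\lambda_k^{s}I_1$, and the orthogonality relations of the eigenfunctions. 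The mixed-derivative term is handled correctly: $\int_\Om\nabla\phi_k\cdot\nabla\phi_l\,dx=\lambda_k\delta_{kl}$, so the expansion does decouple there, and the exponent bookkeeping $\lambda_k^{-2s}\cdot\lambda_k\cdot\lambda_k^{s}=\lambda_k^{1-s}$ lands exactly on $\norm{\F}_{\HH^{1-s}(\Om)}^2$. The finiteness of $I_0,I_1$ follows from the paper's own appendix (Lemma~\ref{lem:psi-0} near zero, Lemma~\ref{lem:psi-2} with $n=1$ for the derivative, Lemma~\ref{lem:psi-3} at infinity), so that part is not really an obstacle.

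There is, however, one step that fails as written: the claimed identity
\[
\norm{y^{\frac\alpha2}\nabla_x^2 u}_{L^2(C)}^2=\sum_{k=1}^\infty \U_k^2\,\norm{\nabla_x^2\phi_k}_{L^2(\Om)}^2\,\lambda_k^{s-1}I_0 .
\]
The Hessians $\nabla_x^2\phi_k$ are \emph{not} mutually orthogonal in $L^2(\Om)$ for $d\ge2$: integrating $\sum_{i,j}\int_\Om\partial_{ij}\phi_k\,\partial_{ij}\phi_l\,dx$ by parts produces $\lambda_k\lambda_l\delta_{kl}$ plus boundary terms involving normal derivatives of the eigenfunctions, which do not vanish for $k\neq l$ on a general convex domain. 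So the cross terms survive and the equality is unjustified. The fix is the one the paper itself uses: apply the decoupling argument to $\Delta_x u$ instead, where $\Delta_x\phi_k=-\lambda_k\phi_k$ is a scalar multiple of $\phi_k$ and orthogonality is exact, obtaining $\norm{y^{\frac\alpha2}\Delta_x u}_{L^2(C)}^2=I_0\sum_k\lambda_k^{1-s}\F_k^2$; then use convexity of $\Om$ to get $\norm{\nabla_x^2 u(\cdot,y)}_{L^2(\Om)}\le\norm{\Delta_x u(\cdot,y)}_{L^2(\Om)}$ for a.e.\ $y$ and integrate in $y$. With that rerouting (and a word on why term-by-term differentiation of the series is legitimate), your argument closes and reproduces the proposition without the external citation.
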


\begin{proof}
  \cite[Theorem 2.7]{Nochetto2015} yields
  \[
    \norm{y^{\frac\alpha2}\Delta_xu}_{L^2(C)}^2+\norm{y^{\frac\alpha2}\partial_y\nabla_xu}_{L^2(C)}^2=
    d_s\norm{\F}_{\HH^{1-s}(\Om)}^2
  \]
  with $d_s$ given above. Convexity of $\Om$ then implies the assertion.
\end{proof}

For a series expansion of the solution to~\eqref{eq:weakExtension}, we introduce the function
$\psi_s\colon[0,\infty)\to\R$ given by
\begin{equation}\label{eq:psi}
  \psi_s(z)= c_s z^s K_s(z) \qquad\text{with}\qquad c_s=\frac{2^{1-s}}{\Gamma(s)}.
\end{equation}
Here, $K_s(z)$ denotes the modified Bessel function of second kind, see, e.g.,~\cite[Section
9.6]{Abramowitz1964} and $\Gamma(s)$ denotes the gamma function.

\begin{proposition}\label{prop:series}
  For $\F\in\HH^{-s}(\Om)$, let $\U\in\HH^s(\Om)$ be the solution of~\eqref{eq:Prob} and $u\in \mathring{H}^1_L(C,y^\alpha)$
  be the solution of~\eqref{eq:weakExtension}. Then, it holds
  \[
    u(x,y)=\sum_{k=1}^\infty \U_k\phi_k(x)\psi_{s,k}(y),
  \]
  where $\U_k=\int_\Om\U\phi_k\,dx$ and $\psi_{s,k}(y)=\psi_s(\sqrt{\lambda_k}y)$.
\end{proposition}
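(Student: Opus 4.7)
The natural approach is separation of variables. I expand $u$ in the $L^2(\Omega)$-orthonormal basis $\set{\varphi_k}$ by writing $u(x,y)=\sum_k U_k(y)\varphi_k(x)$ with $U_k(y)=\int_\Omega u(x,y)\varphi_k(x)\,dx$, and I aim to identify each coefficient as $U_k(y)=\U_k\psi_{s,k}(y)$. To reduce~\eqref{eq:weakExtension} to a one-dimensional problem for $U_k$, I would test with functions of the product form $v(x,y)=\varphi_k(x)\eta(y)$ for $\eta\in C^\infty_c([0,\infty))$. Using $\int_\Omega \nabla_x\varphi_k\cdot\nabla_x\varphi_j\,dx=\lambda_k\delta_{kj}$ and Fubini, this produces the weak ODE
\[
  \int_0^\infty y^\alpha\bigl(U_k'(y)\eta'(y)+\lambda_k U_k(y)\eta(y)\bigr)\,dy = d_s\F_k\,\eta(0),
\]
equivalent to $-(y^\alpha U_k')'+\lambda_k y^\alpha U_k=0$ on $(0,\infty)$ together with the Neumann condition $\lim_{y\to 0}y^\alpha U_k'(y)=-d_s\F_k$, subject to the decay forced by $u\in\mathring{H}^1_L(C,y^\alpha)$.

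\textbf{Solving the ODE.} The substitution $z=\sqrt{\lambda_k}\,y$ turns the equation into a modified Bessel equation of order $s$. The space of solutions that are square-integrable against $y^\alpha$ near $y=\infty$ is one-dimensional and spanned by $\psi_{s,k}$, since the companion solution $I_s$ grows exponentially. It remains to determine the multiplicative constant. From the appendix (specifically the asymptotics of $K_s$ near the origin and the derivative identity $(z^s K_s(z))'=-z^s K_{s-1}(z)$) one obtains the two normalizations
\[
  \psi_s(0)=1,\qquad \lim_{z\to 0^+}z^\alpha\psi_s'(z)=-d_s,
\]
where the choice $c_s=2^{1-s}/\Gamma(s)$ is precisely what makes the first identity hold. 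The chain rule then gives $\lim_{y\to 0^+}y^\alpha\partial_y\psi_{s,k}(y)=-d_s\lambda_k^s$. Matching the Neumann condition yields $U_k(y)=\lambda_k^{-s}\F_k\,\psi_{s,k}(y)$, and Proposition~\ref{prop:SolUF} identifies the prefactor with $\U_k$.

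\textbf{Assembly and convergence.} It remains to justify that the candidate series $\sum_k \U_k\varphi_k(x)\psi_{s,k}(y)$ converges in $\mathring{H}^1_L(C,y^\alpha)$ so that, by uniqueness from Proposition~\ref{prop:equi}, it equals $u$. Using orthogonality of $\set{\varphi_k}$ in $L^2(\Om)$ and of $\set{\nabla\varphi_k}$ in $L^2(\Om)$, the weighted norm of the partial sums decouples into
\[
  \sum_{k=1}^N \U_k^2\Bigl(\lambda_k\!\int_0^\infty\! y^\alpha\psi_{s,k}(y)^2\,dy+\!\int_0^\infty\! y^\alpha\psi_{s,k}'(y)^2\,dy\Bigr).
\]
A change of variables $z=\sqrt{\lambda_k}y$ scales both bracketed integrals like $\lambda_k^{-s}$ times a finite $k$-independent integral of $\psi_s^2$, respectively $(\psi_s')^2$, against $z^\alpha$; these integrals are finite by the exponential decay of $K_s$ at infinity and its explicit asymptotics at zero, both collected in the appendix. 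Hence the weighted energy of the partial sums is controlled by $\sum_k \lambda_k^s\U_k^2=\norm{\U}_{\HH^s(\Om)}^2$, which is finite by Proposition~\ref{prop:SolUF}. Cauchy completeness then yields convergence, and linearity plus the computation above shows the limit solves~\eqref{eq:weakExtension}.

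\textbf{Main obstacle.} The bookkeeping above is straightforward once the two normalizations $\psi_s(0)=1$ and $\lim_{z\to 0^+}z^\alpha\psi_s'(z)=-d_s$ are in hand; the genuine work lies in extracting these limits and the scaling integrability bounds from the modified-Bessel asymptotics. This is precisely the technical content deferred to the appendix, and once those properties of $\psi_s$ are invoked the proof reduces to the separation-of-variables calculation sketched above.
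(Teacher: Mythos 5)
Your argument is correct in substance, but it takes a genuinely different route from the paper: the paper does not prove this proposition at all, it simply cites \cite[Proposition 2.1]{Capella2011} for $s\neq\frac12$ and \cite[Proposition 2.2]{Cabre2010} for $s=\frac12$, and the only actual computation in its proof is the verification $c_{\frac12}z^{\frac12}K_{\frac12}(z)=e^{-z}$, which reconciles the exponential profile of Cabr\'e--Tan with the unified Bessel formula $\psi_s(z)=c_sz^sK_s(z)$. What you have written is essentially a self-contained reconstruction of the argument in those references: separation of variables against the eigenbasis, reduction to the degenerate ODE $-(y^\alpha U_k')'+\lambda_ky^\alpha U_k=0$, selection of the decaying Bessel branch, and matching of the two normalizations $\psi_s(0)=1$ and $\lim_{z\to0^+}z^\alpha\psi_s'(z)=-d_s$ (both of which do check out against the appendix: Lemma~\ref{lem:psi-0}, identity~\eqref{eq:SolutionRegularity_representation_baseCase}, the asymptotics \cite[9.6.9]{Abramowitz1964}, and $K_{-\nu}=K_\nu$). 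This buys self-containedness and makes transparent where the constant $d_s$ comes from, at the cost of length; the paper's citation buys brevity and shifts the burden to the literature. One small slip in your convergence step: after the substitution $z=\sqrt{\lambda_k}y$ the integral $\int_0^\infty y^\alpha\psi_{s,k}^2\,dy$ scales like $\lambda_k^{s-1}$ (not $\lambda_k^{-s}$) and $\int_0^\infty y^\alpha(\psi_{s,k}')^2\,dy$ like $\lambda_k^{s}$, so that with the prefactor $\lambda_k$ both contributions are $O(\lambda_k^{s})$; your stated conclusion $\sum_k\lambda_k^s\U_k^2<\infty$ is the right one, so this is only a bookkeeping typo and does not affect the proof.
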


\begin{proof}
  For $s\neq\frac12$, see~\cite[Proposition 2.1]{Capella2011}. For $s=\frac12$, the result was
  proved with $\psi_{\frac12}(z)=e^{-z}$ in~\cite[Proposition 2.2]{Cabre2010}. However,
  by~\cite[9.6.23]{Abramowitz1964} and the relation $\Gamma(\frac12)=\sqrt{\pi}$, it holds
  \[
    c_{\frac12}z^{\frac12}K_{\frac12}(z)=e^{-z}=\psi_{\frac12}(z).
  \]
  Hence, $\psi_s(z)=c_s z^s K_s(z)$ holds also in the case $s=\frac12$.
\end{proof}

We state an estimate of the derivatives of $\psi_{s,k}$ which we will use later. The result is based
on properties of $\psi_s$ which are analyzed in the appendix.

\begin{corollary}\label{cor:psi-2}
  Let $r\in[0,1]$. There exists a constant $c > 0$ depending only on $s$, such that for any $y>0$
  and $n\in\N$ it holds
  \[
    \abs{y^n\psi_{s,k}^{(n)}(y)}\le c8^n  n!\lambda_k^{s-\frac r2}y^{2s-r}.
  \]
\end{corollary}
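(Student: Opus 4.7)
The plan is to reduce the estimate to a dimensionless bound on $\psi_s$ itself via the chain rule, and then to invoke the bounds on $\psi_s^{(n)}$ that are prepared in the appendix. Since $\psi_{s,k}(y)=\psi_s(\sqrt{\lambda_k}\,y)$, the chain rule gives $\psi_{s,k}^{(n)}(y)=\lambda_k^{n/2}\psi_s^{(n)}(\sqrt{\lambda_k}\,y)$. Setting $z=\sqrt{\lambda_k}\,y$, I would collect the factors as
\[
  y^n\psi_{s,k}^{(n)}(y)=(\sqrt{\lambda_k}\,y)^n\,\psi_s^{(n)}(\sqrt{\lambda_k}\,y)=z^n\psi_s^{(n)}(z),
\]
while the target right-hand side transforms as $z^{2s-r}=\lambda_k^{s-r/2}y^{2s-r}$. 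The corollary is therefore equivalent to the scale-invariant statement
\[
  \abs{z^n\psi_s^{(n)}(z)}\le c\,8^n\,n!\,z^{2s-r}\qquad\text{for all }z>0,\ n\in\N,\ r\in[0,1].
\]

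To verify the latter, I would split according to whether $z\le 1$ or $z\ge 1$. In the small-$z$ regime, $r\ge 0$ forces $z^{2s-r}\ge z^{2s}$, so it suffices to prove the stronger bound $\abs{z^n\psi_s^{(n)}(z)}\le c\,8^n n!\,z^{2s}$. This reflects the behavior of $\psi_s(z)=c_s z^s K_s(z)$ near the origin: the standard series expansion of $z^s K_s(z)$ gives $\psi_s(z)=1+O(z^{2s})+O(z^2)$, and the factorial constants $8^n n!$ arise either from Cauchy estimates on a fixed disc of radius $\sim 1/8$ or directly from term-by-term differentiation of the Bessel series -- this is exactly the type of bound established in the appendix. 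In the large-$z$ regime, $r\le 1$ gives $z^{2s-r}\ge z^{2s-1}$, and one can exploit the exponential decay of $K_s$ and of all its derivatives at infinity, so that any polynomial factor $z^n$ together with $n!$ is absorbed comfortably by $e^{-z}$; again the factorial control is made quantitative in the appendix.

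The genuinely technical point is the appendix estimate tracking the factorial dependence $8^n n!$ on $n$; once that bound on $\psi_s^{(n)}$ is available, the corollary is an immediate consequence of the chain rule and the substitution $z=\sqrt{\lambda_k}\,y$, with the two regimes combined to yield a single constant $c$ depending only on $s$. Substituting back then gives the claimed $\abs{y^n\psi_{s,k}^{(n)}(y)}\le c\,8^n n!\,\lambda_k^{s-r/2}y^{2s-r}$ and concludes the proof.
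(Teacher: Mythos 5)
Your proof is correct and follows exactly the paper's own route: the chain rule gives $y^n\psi_{s,k}^{(n)}(y)=z^n\psi_s^{(n)}(z)$ with $z=\sqrt{\lambda_k}\,y$, and the dimensionless bound $\abs{z^n\psi_s^{(n)}(z)}\le c\,8^n n!\,z^{2s-r}$ is precisely Lemma~\ref{lem:psi-2} from the appendix, after which the substitution yields the claim. Your additional sketch of how that appendix lemma would be proved is not needed for the corollary itself (and the paper's actual argument there uses an explicit derivative formula for $z^sK_s(z)$ together with monotonicity of $z^{\nu_0}e^zK_\nu(z)$ rather than Cauchy estimates), but the reduction step, which is all the corollary requires, matches the paper.
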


\begin{proof}
  Simple calculations yield
  \[
    \psi_{s,k}^{(n)}(y) = \frac{d^n}{d y^n} \psi(\sqrt{\lambda_k} y)
    = (\sqrt{\lambda_k})^n\psi^{(n)}(\sqrt{\lambda_k} y) .
  \]
  Consequently, we obtain by means of Lemma~\ref{lem:psi-2}
  \[
    \abs{y^n \psi_{s,k}^{(n)}(y)} =
    \abs{(\sqrt{\lambda_k} y)^n\psi^{(n)}(\sqrt{\lambda_k} y)} \leq
    c8^n n!(\sqrt{\lambda_k} y)^{2s-r}=c8^n n!\lambda_k^{s-\frac r2}y^{2s-r}.\qedhere
  \]
\end{proof}

Next, we state a result about the exponential decay of $\psi_{s,k}$ and its derivative. It is based
on corresponding results for $\psi_s$ and its derivative, proved in the appendix.

\begin{corollary}\label{cor:psi-3}
  Let $y\geq1$, $r_1\geq \min(s,\frac12)-s$ and $r_2\geq \min(1-s,\frac12)-s$. Then, there exists a
  constant $c_1$ only depending on $r_1$, $s$, and $\lambda_1$ and a constant $c_2$ only depending on
  $r_2$, $s$, and $\lambda_1$ such that
  \[
    \abs{y^{r_1}\psi_{s,k}(y)}\leq c_1 \lambda_k^{-\frac{r_1}2}e^{-\frac{\sqrt{\lambda_k}}2y}\quad
    \text{and}\quad \abs{y^{r_2}\psi_{s,k}'(y)}\leq c_2 \lambda_k^{-\frac{r_2}2}
    e^{-\frac{\sqrt{\lambda_k}}2y}.
  \]
\end{corollary}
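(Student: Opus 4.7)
The plan is to mirror the proof of Corollary \ref{cor:psi-2}: reduce the two claimed bounds to the analogous decay estimates for the un-scaled function $\psi_s$ itself, via the substitution $z=\sqrt{\lambda_k}\,y$, and then invoke the appendix lemma on the exponential decay of $\psi_s$ and $\psi_s'$. Concretely, I would start from $\psi_{s,k}(y)=\psi_s(\sqrt{\lambda_k}\,y)$ and, by the chain rule, $\psi_{s,k}'(y)=\sqrt{\lambda_k}\,\psi_s'(\sqrt{\lambda_k}\,y)$. Writing $z=\sqrt{\lambda_k}\,y$, the factor $y^{r}$ becomes $\lambda_k^{-r/2}z^{r}$, so that
\[
  y^{r_1}\psi_{s,k}(y)=\lambda_k^{-r_1/2}\,z^{r_1}\psi_s(z),\qquad
  y^{r_2}\psi_{s,k}'(y)=\lambda_k^{(1-r_2)/2}\,z^{r_2}\psi_s'(z).
\]

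The hypothesis $y\ge 1$ together with $\lambda_k\ge\lambda_1$ ensures $z\ge\sqrt{\lambda_1}$, which is the regime covered by the appendix. The lower bounds $r_1\ge\min(s,\tfrac12)-s$ and $r_2\ge\min(1-s,\tfrac12)-s$ are precisely the thresholds needed for the (potentially singular) behavior of $\psi_s$ and $\psi_s'$ near $0$ to be compensated by $z^{r_1}$ and $z^{r_2}$, respectively, so that the appendix bound remains uniformly valid on $[\sqrt{\lambda_1},\infty)$.

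The appendix input I expect to invoke is of the shape $z^{r_1}|\psi_s(z)|\le c\,e^{-z/2}$ and $z^{r_2+1}|\psi_s'(z)|\le c\,e^{-z/2}$ on $[\sqrt{\lambda_1},\infty)$, with constants depending on $r_i$, $s$, and $\lambda_1$. These follow from the large-$z$ Bessel asymptotic $K_s(z)\sim\sqrt{\pi/(2z)}\,e^{-z}$, which gives $\psi_s(z),\psi_s'(z)=O(z^{s-1/2}e^{-z})$, combined with the fact that $\psi_s(0)=1$ and $\psi_s'(z)=O(z^{\min(2s,1)-1})$ near zero; absorbing a polynomial into half of the exponential is the standard device. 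Plugging these estimates into the scaled identities above gives the first bound immediately, and for the derivative produces $c\,\lambda_k^{(1-r_2)/2}z^{-1}e^{-z/2}=c\,\lambda_k^{-r_2/2}\,y^{-1}e^{-\sqrt{\lambda_k}y/2}$, whose residual $y^{-1}$ is dropped using $y\ge 1$. This yields exactly the stated prefactors $\lambda_k^{-r_i/2}$.

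The only genuine work is hidden in the appendix, namely obtaining the uniform decay estimates for $\psi_s$ and $\psi_s'$ down to $z=\sqrt{\lambda_1}$ across all $s\in(0,1)$; the delicate case is $s<\tfrac12$, where $\psi_s'$ has an integrable but unbounded power singularity at the origin and the optimal threshold for $r_2$ has to be read off from that. Once those scalar bounds are in place, the present corollary is a purely mechanical rescaling with the chain-rule factor $\sqrt{\lambda_k}$.
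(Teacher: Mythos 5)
Your proposal is correct and follows essentially the same route as the paper: rescale via $z=\sqrt{\lambda_k}\,y$, invoke the appendix decay lemma for $\psi_s$ and $\psi_s'$, and absorb the chain-rule factor $\sqrt{\lambda_k}=z/y\le z$ by raising the exponent to $r_2+1$ and discarding the leftover $y^{-1}$ using $y\ge1$. The only cosmetic difference is that you fix $a=\sqrt{\lambda_1}$ directly in the appendix lemma, whereas the paper takes $a=\sqrt{\lambda_k}$ and then bounds $a^{s_0}e^{a}K_s(a)$ by its value at $\sqrt{\lambda_1}$ using the monotonicity of $z^{s_0}e^{z}K_s(z)$; both yield the same constants depending only on $r_i$, $s$, and $\lambda_1$.
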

\begin{proof}
  Let $s_0=\min(s,\frac12)$. According to \cite[Theorem 5]{MillerSamko}, we get for $z\geq0$ that
  $z^{s_0}e^zK_s(z)$ is a decreasing function. Consequently, we obtain
  \begin{equation}\label{eq:monotonelambda1}
    (\sqrt{\lambda_k})^{s_0}e^{\sqrt{\lambda_k}}K_s(\sqrt{\lambda_k})\leq (\sqrt{\lambda_1})^{s_0}e^{\sqrt{\lambda_1}}K_s(\sqrt{\lambda_1})
  \end{equation}
  since the sequence $(\lambda_k)_{k\in\N}$ is non-decreasing. Moreover,
  due to the definition of $\psi_{s,k}$, we deduce
  \[
    \abs{y^{r_1}\psi_{s,k}(y)}= \lambda_k^{-\frac{r_1}2}\abs{(\sqrt{\lambda_k}y)^{r_1}\psi_{s}(\sqrt{\lambda_k}y)}.
  \]
  Thus, by setting $a=\sqrt{\lambda_k}$ and $z=\sqrt{\lambda_k}y\ge a$ in Lemma \ref{lem:psi-3}
  (\ref{it:1}), we obtain the validity of the first inequality of the assertion by means of
  \eqref{eq:monotonelambda1}. The second inequality can be deduced in the same manner employing
  Lemma \ref{lem:psi-3} (\ref{it:2}).
\end{proof}

As already mentioned, for computational reasons, the semi-infinite cylinder $C$ will be truncated to
$C_Y=\Om\times(0,Y)$ for some $Y>0$ later on. Because of this, the behavior of $\nabla u$ for
$y\to\infty$ will play a role. It can be estimated as follows:

\begin{proposition}\label{prop:truncation}
  For $\F\in\HH^{-s}(\Om)$, let $u\in\mathring{H}^1_L(C,y^\alpha)$ be the solution
  of~\eqref{eq:weakExtension}. Then, there exists a constant $c>0$ such that for every $Y\ge 1$, it
  holds
  \[
    \norm{y^{\frac\alpha2}\nabla u}_{L^2(C\setminus C_Y)}\le c
    e^{-\frac{\sqrt{\lambda_1}}2Y}\norm{\F}_{\HH^{-s}(\Om)}.
  \]
\end{proposition}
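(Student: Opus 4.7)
The plan is to expand $u$ in the series from Proposition~\ref{prop:series}, exploit the $L^2(\Om)$-orthogonality of the eigenfunctions $\phi_k$ to reduce the weighted $L^2$-norm over $C\setminus C_Y$ to a sum of one-dimensional integrals in $y$, and then control each integral by the pointwise decay bounds of Corollary~\ref{cor:psi-3}. The identity $\norm{\F}_{\HH^{-s}(\Om)}^2=\sum_k\lambda_k^{-s}\F_k^2=\sum_k\lambda_k^s\U_k^2$ from Proposition~\ref{prop:SolUF} will match the right-hand side of the estimate.

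First I would split $|\nabla u|^2=|\nabla_x u|^2+|\partial_y u|^2$ and, using Fubini together with $L^2$-orthonormality of $\phi_k$ and $\int_\Om\nabla\phi_j\cdot\nabla\phi_k\,dx=\lambda_k\delta_{jk}$, arrive at the identity
\[
  \norm{y^{\alpha/2}\nabla u}_{L^2(C\setminus C_Y)}^2=\sum_{k=1}^\infty \U_k^2\int_Y^\infty y^\alpha\bigl(\lambda_k\psi_{s,k}(y)^2+\psi_{s,k}'(y)^2\bigr)\,dy.
\]
The splitting is essential since the two parts are handled by the two different bounds in Corollary~\ref{cor:psi-3}.

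Next I would apply Corollary~\ref{cor:psi-3} with $r_1=r_2=\alpha/2=(1-2s)/2$. Squaring the pointwise bounds produces factors $\lambda_k^{-\alpha/2}e^{-\sqrt{\lambda_k}y}$, and the elementary integral $\int_Y^\infty e^{-\sqrt{\lambda_k}y}\,dy=\lambda_k^{-1/2}e^{-\sqrt{\lambda_k}Y}$, together with $\tfrac12-\tfrac\alpha2=s$, reduces the bracketed quantity above to a bound of the form $c(\lambda_k^s+\lambda_k^{s-1})e^{-\sqrt{\lambda_k}Y}$. Using $\lambda_k\ge\lambda_1$ I can then bound $e^{-\sqrt{\lambda_k}Y}\le e^{-\sqrt{\lambda_1}Y}$ uniformly in $k$ and absorb $\lambda_k^{s-1}$ into $\lambda_1^{-1}\lambda_k^s$, so that the remaining sum is proportional to $\sum_k\lambda_k^s\U_k^2=\norm{\F}_{\HH^{-s}(\Om)}^2$. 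Taking square roots yields the asserted inequality with the exponent $-\tfrac12\sqrt{\lambda_1}Y$.

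The main subtlety I anticipate is verifying that the single choice $r_1=r_2=\alpha/2$ is legitimate in Corollary~\ref{cor:psi-3} across both ranges $s\le\tfrac12$ and $s>\tfrac12$: for $\psi_{s,k}$ the hypothesis reduces to $(1-2s)/2\ge 0$ when $s\le\tfrac12$ and to equality when $s>\tfrac12$, while for $\psi_{s,k}'$ it is an equality when $s\le\tfrac12$ and a strict inequality when $s>\tfrac12$ (because $\alpha<0$ makes $\alpha/2>\alpha$). Once this case check is done, and the requirement $y\ge 1$ of the corollary is covered by the assumption $Y\ge 1$, the rest is mechanical manipulation of series and the single elementary exponential integral.
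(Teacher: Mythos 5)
Your proposal is correct and follows essentially the same route as the paper: series expansion via Proposition~\ref{prop:series}, orthogonality to reduce to one-dimensional integrals in $y$, Corollary~\ref{cor:psi-3} with $r_1=r_2=\frac{\alpha}{2}=\frac12-s$, the elementary exponential integral, and Proposition~\ref{prop:SolUF} to identify $\sum_k\lambda_k^s\U_k^2$ with $\norm{\F}_{\HH^{-s}(\Om)}^2$. Your explicit verification that the single choice $r_1=r_2=\frac\alpha2$ is admissible in both ranges of $s$ is a detail the paper leaves implicit, and it checks out.
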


\begin{proof}
  The result can be found in \cite[Proposition 3.1]{Nochetto2015}. For the sake of completeness, we
  state a (slightly different) proof here. According to Proposition \ref{prop:series}, we obtain by
  using the definition of the eigenfunctions $\varphi_k$ and its orthogonality
  \begin{align*}
    \norm{y^{\frac\alpha2}\nabla u}_{L^2(C\setminus C_Y)}^2
    &=\int_{Y}^\infty y^\alpha\int_\Omega\bigl\{\abs{\nabla_xu(x,y)}^2+\abs{\partial_yu(x,y)}^2\bigr\}\,dx\,dy\\
    &=\sum_{k=1}^\infty \U_k^2\int_{Y}^\infty \bigl\{\lambda_k\abs{y^{\frac\alpha2}\psi_{s,k}(y)}^2+ \abs{y^{\frac\alpha2}\psi_{s,k}'(y)}^2\bigr\}\,dy\\
    &\leq c\sum_{k=1}^\infty \U_k^2\int_{Y}^\infty
    \bigl\{\lambda_k^{s+\frac12}e^{-\sqrt{\lambda_k}y}+\lambda_k^{s-\frac12}
    e^{-\sqrt{\lambda_k}y}\bigr\}\,dy,
  \end{align*}
  where we employed Corollary \ref{cor:psi-3} in the last step with
  $r_1=r_2=\frac{\alpha}{2}=\frac12-s$. Calculating the integral and using that
  $(\lambda_k)_{k\in\N}$ is a non-decreasing sequence, yields
  \[
    \norm{y^{\frac\alpha2}\nabla u}_{L^2(C\setminus C_Y)}^2\leq c\sum_{k=1}^\infty \U_k^2
    \bigl\{\lambda_k^{s}e^{-\sqrt{\lambda_k}Y}+\lambda_k^{s-1} e^{-\sqrt{\lambda_k}Y}\bigr\}\leq
    ce^{-\sqrt{\lambda_1}Y}\sum_{k=1}^\infty \lambda_k^{s}\U_k^2 =
    ce^{-\sqrt{\lambda_1}Y}\norm{\F}^2_{\HH^{-s}(\Om)},
  \]
  where we used Proposition \ref{prop:SolUF} in the last step.
\end{proof}

\section{Discretization}\label{sec:Disc}

Let $\TT_\Om$ be a conforming and quasi-uniform triangulation of $\Omega$ which is admissible in the
sense of Ciarlet. For each $\TT_\Om=\set{K}$, let $K\subset \R^d$  be an element that is
isoparametrically equivalent either to the unit cube or to the unit simplex in $\R^d$.  We introduce
the global mesh parameter $h_\Omega$ with respect to the triangulation of $\Omega$ by
$h_\Omega=\max_{K\in\TT_\Omega}\operatorname{diam}K$. We always assume that $h_\Omega\leq \frac12$.
On $\TT_\Om$, we define a finite element space $V_h$ as
\[
  V_h=\Set{\V\in C^0(\overline{\Om}) | \V\bigr\rvert_{K}\in\mathds{P}_1(K),~K\in \TT_\Om,~\V\bigr\rvert_{\partial\Om}=0}.
\]
In case that $K$ is a simplex then $\mathds{P}_1(K)=\PP_1(K)$, the set of polynomials on the element
$K$ of degree at most $1$. If $K$ is a cube then $\mathds{P}_1(K)$ equals $\QQ_1(K)$, the set of
polynomials on $K$ of degree at most 1 in each variable.  The number of degrees of freedom in $V_h$
is denoted by $\NO$. It holds $\NO =O(h_\Om^{-d})$.

Furthermore, let $\II_Y=\set{I_m}$ be a triangulation of the interval $(0,Y)$ in the sense
that $[0,Y]=\bigcup_{m=1}^{M}I_m$ with $I_m=[y_{m-1},y_{m}]$ and $M\in\N$ exactly specified below in
the Sections~\ref{sec:lin_def} and~\ref{sec:p_def}. Moreover, let $h_m=\abs{I_m}$.  Next,
we introduce a polynomial degree vector $p=(p_1,p_2,\dots,p_M)\in\N^M$ which will assign to each
element $I_m\in \II_Y$ a maximal polynomial degree $p_m$. It will be exactly specified in
the Sections~\ref{sec:lin_def} and~\ref{sec:p_def}. On $\II_Y$, we define the finite element space
$V_M$ as
\[
  V_M=\Set{\xi\in C^0(\overline I) | \xi\bigr\rvert_{I_m}\in\PP_{p_m}(I_m),~I_m\in \II_Y,~\xi(Y)=0},
\]
where $\PP_{p_m}(I_m)$ denotes the space of polynomials up to degree $p_m$ on $I_m$.

Now, the triangulations $\TT_{\Om,Y}$ of the cylinder $C_Y$ are constructed as tensor product
triangulations by means of $\TT_\Om$ and $\II_Y$, i.e., $\TT_{\Om,Y}=\set{T}$ with $T=K\times I_m$
for $K\in\TT_\Om$ and $I_m\in\II_Y$.  By means of the previous considerations, we define the finite
element space $V_{h,M}$ posed on the tensor product mesh $\TT_{\Om,Y}$ by
\[
  V_{h,M}=V_h\otimes V_M=\operatorname{span}\Set{v | v(x,y)=\V(x)\xi(y),~\V\in V_h,~\xi\in
  V_M}\subset\mathring{H}^1_L(C_Y,y^\alpha).
\]
Note, that each function $v_h\in V_{h,M}$ vanishes on the lateral boundary of $C_Y$ and on its top.
As a consequence, the extension by zero of $v_h$ to the semi-infinite cylinder $C$ belongs to
$\mathring{H}^1_L(C,y^\alpha)$. Without further mention, we consider this type of extension for each
$v_h\in V_{h,M}$ whenever needed.

With the just introduced notation, we define approximations to the solution $u$ of
\eqref{eq:weakExtension} as follows: Find $u_h\in V_{h,M}$ satisfying
\begin{equation}\label{eq:weakdiscreteExtension}
  \int_{C_Y}y^\alpha \nabla u_h \cdot \nabla v_h \, d(x,y)= d_s \langle \F, \trO
  v_h\rangle_{\HH^{-s}(\Om),\HH^s(\Om)} \quad \forall v_h\in V_{h,M},
\end{equation}
where we recall that $\alpha=1-2s$ and $d_s=2^\alpha \tfrac{\Gamma(1-s)}{\Gamma(s)}$. Note that
$\operatorname{tr}_\Omega u_h=u_h(\cdot,0)$ will be used as an approximation of $\U$.

We distinguish two possible types of discretization in the artificial $y$ direction, which will be
defined in the following two sections.

\subsection{Graded meshes and \boldmath{$h$}-FEM}\label{sec:lin_def}

In this section, let $M\in\N$ to be determined later. We set
\[
  y_m=\biggl(\frac{m}{M}\biggr)^{\frac1\mu} Y\quad \text{for}\quad
  m=0,1,\ldots,M\quad \text{and}\quad p=(1,1,\ldots,1)\in \N^M,
\]
where $\mu\in(0,1]$ represents the grading parameter in direction $y$. Hence, for $\mu<1$ the
triangulation $\TT_{\Om,Y}$ is anisotropic. Moreover, due to the choice of the polynomial degree
vector $p$, the discrete space $V_{h,M}$ consists of globally continuous and piecewise multilinear
functions on $C_Y$. 

We start with a result regarding the diameter $h_m$ of the elements $I_m$.

\begin{lemma}\label{lemma:gradedmesh}
  It holds
  \[
    h_1= M^{-\frac1\mu}Y\quad\text{and}\quad
    \frac{2^{\frac{\mu-1}\mu}}{\mu}y_m^{1-\mu}Y^\mu M^{-1}\leq h_m\leq \frac{1}{\mu} y_m^{1-\mu}Y^\mu
    M^{-1}\quad\text{for } m=2,3,\ldots,M.
  \]
\end{lemma}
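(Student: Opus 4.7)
The plan is to exploit the explicit form $y_m=(m/M)^{1/\mu}Y$ and reduce everything to estimating the increment $m^{1/\mu}-(m-1)^{1/\mu}$ via the mean value theorem.

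First, I would dispatch the case $m=1$ directly: since $y_0=0$, one immediately has $h_1=y_1=M^{-1/\mu}Y$, which is the first claim.

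For $m\geq2$, I would factor out the common scaling to write
\[
  h_m=y_m-y_{m-1}=\frac{Y}{M^{1/\mu}}\bigl(m^{1/\mu}-(m-1)^{1/\mu}\bigr),
\]
and then apply the mean value theorem to $f(t)=t^{1/\mu}$ on $[m-1,m]$ to obtain some $\xi\in(m-1,m)$ with
\[
  m^{1/\mu}-(m-1)^{1/\mu}=\frac{1}{\mu}\,\xi^{1/\mu-1}.
\]
Because $\mu\in(0,1]$, the exponent $1/\mu-1$ is nonnegative, so $t\mapsto t^{1/\mu-1}$ is monotone nondecreasing. The upper bound follows immediately from $\xi\leq m$, giving $h_m\leq (Y/(\mu M^{1/\mu}))m^{1/\mu-1}$; rewriting $m^{1/\mu-1}=m^{1/\mu}/m$ and absorbing a factor $Y^\mu/Y^\mu$ yields $h_m\leq\frac{1}{\mu}y_m^{1-\mu}Y^\mu M^{-1}$, since $y_m^{1-\mu}=(m/M)^{(1-\mu)/\mu}Y^{1-\mu}$ and $(m/M)^{(1-\mu)/\mu}M^{-1}=m^{1/\mu-1}M^{-1/\mu}$. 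For the lower bound, I would use $\xi\geq m-1$ together with the elementary inequality $m-1\geq m/2$ valid for $m\geq2$, producing
\[
  \xi^{1/\mu-1}\geq (m-1)^{1/\mu-1}\geq\Bigl(\frac{m}{2}\Bigr)^{1/\mu-1}=2^{(\mu-1)/\mu}m^{1/\mu-1},
\]
which after the same algebraic rewriting gives $h_m\geq\frac{2^{(\mu-1)/\mu}}{\mu}y_m^{1-\mu}Y^\mu M^{-1}$.

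There is no real obstacle here: the whole argument is a one-line application of the mean value theorem followed by bookkeeping of the exponents of $m$ and $M$. The only spot where some care is needed is verifying the identity $m^{1/\mu-1}M^{-1/\mu}=(m/M)^{(1-\mu)/\mu}M^{-1}$ used to convert the bounds into the stated form involving $y_m^{1-\mu}Y^\mu M^{-1}$, and in making sure the constraint $m\geq2$ is invoked precisely to justify $m-1\geq m/2$, which is what produces the factor $2^{(\mu-1)/\mu}$ in the lower bound.
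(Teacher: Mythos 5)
Your proof is correct and follows essentially the same route as the paper: the mean value theorem applied to $t\mapsto t^{1/\mu}$ on $[m-1,m]$, followed by the bounds $\tfrac{m}{2}\le m-1\le \xi\le m$ and the exponent bookkeeping to express everything in terms of $y_m^{1-\mu}Y^\mu M^{-1}$. The algebraic identity you single out, $m^{1/\mu-1}M^{-1/\mu}Y=y_m^{1-\mu}Y^\mu M^{-1}$, checks out, so there is nothing to add.
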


\begin{proof}
  The first equality is obvious due to the definition of $y_1$. For the second, we observe that
  there exists a $m_*\in(m-1,m)$ such that
  \[
    h_m=y_m-y_{m-1}=(m^{\frac1\mu}-(m-1)^{\frac1\mu})Y M^{-\frac1\mu}=\frac1\mu m_*^{\frac1\mu-1}Y
    M^{-\frac1\mu}=\frac1\mu \biggl(\biggl(\frac{m_*}{M}\biggr)^{\frac{1}\mu}Y\biggr)^{1-\mu} Y^\mu
    M^{-1}
  \]
  due to the mean value theorem. Since $\frac12m\leq m-1\leq m$ for $m\ge 2$, the result follows.
\end{proof}

Since we consider a discretization with linear polynomials in direction $y$, the number of degrees
of freedom $\NY$ in $V_M$ is proportional to $M$, i.e., it holds $\NY=O(M)$.

\subsection{Geometric meshes and \boldmath{$hp$}-FEM}\label{sec:p_def}

For the second possibility of discretization presented here, the mesh in direction $y$ is chosen
as a geometric mesh. That is, for chosen $\sigma\in(0,1)$ and $M\in\N$ to be determined later, the
nodes $y_0,y_1,\dots,y_M$ are given by
\begin{equation}\label{eq:geo_mesh}
  y_0=0 \qquad \text{and} \qquad y_m=\sigma^{M-m} Y\quad \text{for}\quad m=1,2,\dots,M.
\end{equation}
Further, we define $p=(p_1,p_2,\dots,p_M)\in\N^M$ to be a linear degree vector with slope $\beta >
0$. That is, there exists a constant $c \ge 1$ such that for all $m=1,2,\dots,M$ the following
relation is fulfilled:
\begin{equation}\label{eq:lin_deg_vec}
  1 + \beta \ln\frac{h_m}{h_1}\leq p_m \leq 1 + c\beta  \ln\frac{h_m}{h_1}.
\end{equation}

We start with collecting some basic results for the discretization considered in this subsection.

\begin{lemma}\label{lem:GeoMesh_geoMeshProps}
  For a geometric mesh given by~\eqref{eq:geo_mesh}, there holds for $h_m=\abs{I_m}$
  \[
    \begin{aligned}
      h_1 &= y_1 = \sigma^{M-1}Y,\\
      h_m &= (1-\sigma) y_m = (\sigma^{-1} - 1) y_{m-1}&\quad &\text{for }  m = 2,3,\dots,M,\\
      h_m &= (1-\sigma)\sigma^{1-m}h_1&&\text{for }  m = 2,3,\dots,M.
    \end{aligned}
  \]
\end{lemma}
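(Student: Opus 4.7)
The plan is to verify each of the three identities by direct substitution into the defining formula~\eqref{eq:geo_mesh}. No clever estimate or auxiliary tool is needed; the entire lemma is a bookkeeping exercise on the geometric sequence $y_m = \sigma^{M-m}Y$.

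First I would handle the initial interval. Setting $m=1$ in~\eqref{eq:geo_mesh} gives $y_1 = \sigma^{M-1}Y$, and since $y_0 = 0$ we conclude $h_1 = y_1 - y_0 = y_1 = \sigma^{M-1}Y$, which is the first claim.

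Next, for $m \in \{2,\dots,M\}$ I would compute
\[
  h_m = y_m - y_{m-1} = \sigma^{M-m}Y - \sigma^{M-m+1}Y = \sigma^{M-m}Y\,(1-\sigma).
\]
Factoring this product in two different ways yields the two equivalent representations in the second identity: pulling $\sigma^{M-m}Y = y_m$ out of the front gives $h_m = (1-\sigma)y_m$, while pulling out $\sigma^{M-m+1}Y = y_{m-1}$ gives $h_m = (\sigma^{-1}-1)y_{m-1}$.

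Finally, for the third identity I would take the expression $h_m = (1-\sigma)\sigma^{M-m}Y$ obtained above, write $\sigma^{M-m} = \sigma^{1-m}\sigma^{M-1}$, and then invoke the first identity $h_1 = \sigma^{M-1}Y$ to collapse the factor $\sigma^{M-1}Y$ into $h_1$, yielding $h_m = (1-\sigma)\sigma^{1-m}h_1$. Since each step is a one-line algebraic manipulation, there is no real obstacle — the only thing to be careful about is that the formulas for $m\geq 2$ genuinely fail at $m=1$ (because $y_0=0$ is not of the form $\sigma^{M-m+1}Y$), which is exactly why the statement singles out $m=1$ separately.
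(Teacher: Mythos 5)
Your proof is correct and follows essentially the same route as the paper: compute $h_m = y_m - y_{m-1} = \sigma^{M-m}(1-\sigma)Y$ directly from \eqref{eq:geo_mesh}, factor it in the two indicated ways, and divide by $h_1 = \sigma^{M-1}Y$ for the last identity. Nothing is missing.
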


\begin{proof}
  The first equation of the assertion is obvious due to the definition of the geometric mesh. For
  $m=2,3,\dots,M$, we obtain
  \[
    h_m = y_m-y_{m-1} = (\sigma^{M-m} - \sigma^{M-m+1})Y =
    \begin{cases}
      \sigma^{M-m}(1 - \sigma)Y= (1-\sigma) y_m,\\
      \sigma^{M-m+1}(\sigma^{-1} - 1)Y= (\sigma^{-1} - 1) y_{m-1}.
    \end{cases}
  \]
  As a consequence, we get
  \[
    \frac{h_m}{h_1}= \frac{\sigma^{M-m}(1 - \sigma)Y}{\sigma^{M-1}Y} =
    (1-\sigma)\sigma^{1-m}.\qedhere
  \]
\end{proof}

\begin{lemma}\label{lem:GeoMesh_linDegDepOnK}
  For a geometric mesh given by~\eqref{eq:geo_mesh} and a linear degree vector $p\in\N^M$ in the
  sense of~\eqref{eq:lin_deg_vec}, it holds $p_1 =1$ and there is a constant $c\ge 1$ such that
  \[
    1 + \beta  \big(\ln(1-\sigma)+(1-m)\ln\sigma\big)
    \leq p_m\leq 1 + \beta c \big(\ln(1-\sigma)+(1-m)\ln\sigma\big)
  \]
  for all $m=2,3,\dots,M$.
\end{lemma}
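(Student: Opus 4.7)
The plan is to prove the lemma by a direct substitution, using the third identity from Lemma~\ref{lem:GeoMesh_geoMeshProps} to rewrite the logarithmic factor appearing in the definition of the linear degree vector.

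First, I would treat the case $m=1$ separately. Since $h_1/h_1=1$, we have $\ln(h_1/h_1)=0$, so the defining inequality~\eqref{eq:lin_deg_vec} collapses to $1 \leq p_1 \leq 1$, giving $p_1=1$.

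For the main case $m \geq 2$, I would invoke Lemma~\ref{lem:GeoMesh_geoMeshProps}, which states
\[
  \frac{h_m}{h_1} = (1-\sigma)\sigma^{1-m}.
\]
Taking logarithms yields
\[
  \ln\frac{h_m}{h_1} = \ln(1-\sigma) + (1-m)\ln\sigma.
\]
Substituting this identity into both the lower and upper bounds of~\eqref{eq:lin_deg_vec} immediately produces the inequalities claimed in the lemma statement, with the same constants $\beta$ and $c\ge 1$ as in the defining relation.

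There is no real obstacle: the lemma is essentially a restatement of~\eqref{eq:lin_deg_vec} specialized to the explicit form of the geometric mesh sizes established in the preceding lemma. The only minor point worth a brief remark is that the expression $\ln(1-\sigma)+(1-m)\ln\sigma$ need not be nonnegative for every $m$ (depending on how $\sigma\in(0,1)$ compares to $1/2$ and on $m$), but this plays no role in the derivation itself since we merely propagate the inequalities of~\eqref{eq:lin_deg_vec} through the substitution.
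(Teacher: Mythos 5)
Your proof is correct and follows exactly the same route as the paper's: establish $p_1=1$ from $\ln(h_1/h_1)=0$, then substitute the identity $h_m/h_1=(1-\sigma)\sigma^{1-m}$ from Lemma~\ref{lem:GeoMesh_geoMeshProps} into~\eqref{eq:lin_deg_vec}. Nothing further is needed.
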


\begin{proof}
  Since $\ln\frac{h_1}{h_1}=0$, there obviously holds $p_1=1$. According to
  Lemma~\ref{lem:GeoMesh_geoMeshProps}, we get for $m=2,3\dots,M-1$
  \[
    \ln\frac{h_m}{h_1} = \ln\left( (1-\sigma)\sigma^{1-m} \right) =  \ln(1-\sigma)+(1-m)\ln\sigma,
  \]
  which shows the second assertion.
\end{proof}

\begin{lemma}\label{lem:GeoMesh_dofsInY}
  Let $\II_Y$ be a geometric triangulation of $(0,Y)$ given by~\eqref{eq:geo_mesh} and let
  $p\in\N^M$ be a linear degree vector as defined in~\eqref{eq:lin_deg_vec}. Then, for the number of
  degrees of freedom $\NY$ in $V_M$, it holds
  \[
    \NY = 1+\sum_{m=1}^M p_m =O(M^2).
  \]
\end{lemma}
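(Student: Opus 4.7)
The first identity is a routine dimension count for $C^0$ piecewise-polynomial spaces on the one-dimensional triangulation $\II_Y$. The plan is to sum the local dimensions $\dim \PP_{p_m}(I_m) = p_m + 1$ over all $M$ elements and subtract the $M-1$ continuity constraints imposed at the interior nodes $y_1, \ldots, y_{M-1}$, giving
\[
  \sum_{m=1}^M (p_m + 1) \;-\; (M-1) \;=\; 1 + \sum_{m=1}^M p_m.
\]
(Equivalently: one free vertex DOF at $y_0 = 0$ plus, for each element $I_m$, one new right-endpoint value and $p_m - 1$ interior bubble modes.)

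For the asymptotic bound, I would invoke the upper estimate from Lemma \ref{lem:GeoMesh_linDegDepOnK}, which states
\[
  p_m \leq 1 + \beta c\bigl(\ln(1-\sigma) + (1-m)\ln\sigma\bigr) \qquad\text{for } m = 2,3,\ldots,M.
\]
Since $\sigma \in (0,1)$, the coefficient $-\beta c\ln\sigma$ is strictly positive, so the right-hand side is a linear function of $m$ with positive slope. Combined with $p_1 = 1$, this yields $p_m = O(m)$ uniformly in $m$ and $M$. Summing and using $\sum_{m=1}^M m = M(M+1)/2$ then gives
\[
  \sum_{m=1}^M p_m \;\leq\; M + \beta c\sum_{m=2}^M \bigl(\ln(1-\sigma) + (1-m)\ln\sigma\bigr) \;=\; O(M^2),
\]
and hence $\NY = 1 + \sum_{m=1}^M p_m = O(M^2)$.

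There is no real obstacle here: the first equality is a standard bookkeeping exercise, and the asymptotic statement reduces to the summation of an arithmetic progression via Lemma \ref{lem:GeoMesh_linDegDepOnK}. A matching lower bound $\NY \geq c M^2$, while not part of the assertion, would follow analogously from the lower bound in Lemma \ref{lem:GeoMesh_linDegDepOnK}, so that in fact $\NY = \Theta(M^2)$; this will be relevant later when trading off accuracy against complexity in the $hp$-setting.
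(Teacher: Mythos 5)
Your proof is correct and follows essentially the same route as the paper: the identity comes from summing the local dimensions $p_m+1$ and subtracting the $M-1$ shared nodal values, and the $O(M^2)$ bound follows from the linear growth $p_m = O(m)$ supplied by Lemma~\ref{lem:GeoMesh_linDegDepOnK} together with the arithmetic-series sum. The paper's proof is just a terser version of exactly this argument.
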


\begin{proof}
  On each interval $I_m$ we locally have $p_m+1$ degrees of freedom. Moreover, two neighboring
  intervals always share one degree of freedom. Thus, we have
  \[
    \NY =\sum_{m=1}^M (p_m +1) - (M-1) =1+\sum_{m=1}^M p_m,
  \]
  which proves the assertion having in mind Lemma \ref{lem:GeoMesh_linDegDepOnK}.
\end{proof}

\section{Error Estimates}\label{sec:ErrorEst}

We start with providing a general error estimate between the solutions $u\in
\mathring{H}^1_L(C,y^\alpha)$ of~\eqref{eq:weakExtension} and $u_h\in V_{h,M}$
of~\eqref{eq:weakdiscreteExtension} in weighted norms.

\begin{lemma}\label{lem:bestapprox}
  Let  $u\in \mathring{H}^1_L(C,y^\alpha)$ be the solution of~\eqref{eq:weakExtension} and $u_h\in
  V_{h,M}$ be the solution of~\eqref{eq:weakdiscreteExtension}. Then, it holds
  \[
    \norm{y^{\frac\alpha2}\nabla(u-u_h)}_{L^2(C)}\le \min_{v_h\in V_{h,M}}\norm{y^{\frac\alpha2}\nabla(
    u-v_h)}_{L^2(C_Y)}+\norm{y^{\frac\alpha2}\nabla u}_{L^2(C\setminus C_Y)}.
  \]
\end{lemma}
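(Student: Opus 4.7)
The plan is to combine a Céa-type best-approximation argument on the truncated cylinder $C_Y$ with an exact pythagorean splitting across the interface $\Om\times\set{Y}$. Recall that every $v_h\in V_{h,M}$ is extended by zero to $C\setminus C_Y$, so $\nabla v_h$ vanishes there. In particular, this applies to $u_h$ itself.

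First I would perform the orthogonal splitting
\[
  \norm{y^{\frac\alpha2}\nabla(u-u_h)}_{L^2(C)}^2
  = \norm{y^{\frac\alpha2}\nabla(u-u_h)}_{L^2(C_Y)}^2
  + \norm{y^{\frac\alpha2}\nabla u}_{L^2(C\setminus C_Y)}^2,
\]
which is immediate from $\nabla u_h\equiv 0$ on $C\setminus C_Y$. This isolates the unavoidable truncation contribution as the second summand.

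Next I would handle the first summand by Galerkin orthogonality. For any $v_h\in V_{h,M}$, using that the zero extension of $v_h$ lies in $\mathring{H}^1_L(C,y^\alpha)$ and agrees with its restriction to $C_Y$, testing \eqref{eq:weakExtension} and \eqref{eq:weakdiscreteExtension} with $v_h$ yields
\[
  \int_{C_Y} y^\alpha \nabla(u-u_h)\cdot\nabla v_h\,d(x,y)=0\quad\forall v_h\in V_{h,M}.
\]
Then a standard Céa argument, inserting $u-v_h+v_h-u_h$ and using Cauchy--Schwarz in the weighted inner product, gives
\[
  \norm{y^{\frac\alpha2}\nabla(u-u_h)}_{L^2(C_Y)}\le \min_{v_h\in V_{h,M}}\norm{y^{\frac\alpha2}\nabla(u-v_h)}_{L^2(C_Y)}.
\]

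Finally I would conclude by taking square roots in the pythagorean identity and applying $\sqrt{a^2+b^2}\le a+b$ for $a,b\ge 0$ together with the Céa bound. The only mild subtlety, and the place where one must be careful, is the extension-by-zero step: functions in $V_{h,M}$ already vanish on the top face $\Om\times\set{Y}$ and on $\partial_L C_Y$, so their extensions are genuinely in $\mathring{H}^1_L(C,y^\alpha)$; without this the continuous equation \eqref{eq:weakExtension} could not be tested with $v_h$ and Galerkin orthogonality would fail. Everything else is routine.
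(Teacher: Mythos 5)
Your proof is correct and rests on the same two ingredients as the paper's: the Galerkin orthogonality obtained by testing \eqref{eq:weakExtension} with the zero extension of $v_h$, and the fact that $u_h$ vanishes on $C\setminus C_Y$. The only difference is organizational — you split off the truncation term first via the Pythagorean identity and then apply Céa on $C_Y$, whereas the paper runs a single Cauchy–Schwarz chain over all of $C$ — and both yield the stated bound.
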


\begin{proof}
  Testing \eqref{eq:weakExtension} with $v_h\in V_{h,M}$ and then subtracting
  \eqref{eq:weakdiscreteExtension} from this equation yields
  \begin{equation}\label{eq:GalerkinO}
    \int_{C_Y}y^\alpha \nabla(u-u_h)\cdot \nabla v_h \,d(x,y)=0.
  \end{equation}
  Based on this, we deduce for all $v_h\in V_{h,M}$
  \begin{align*}
    \norm{y^{\frac\alpha2}\nabla(u-u_h)}_{L^2(C)}^2
    &=\int_{C_Y} y^\alpha \nabla(u-u_h)\cdot\nabla(u-v_h)\,d(x,y)+\int_{C\setminus C_Y}y^\alpha\nabla (u-u_h)\cdot \nabla u\,d(x,y)\\
    &\leq
    \norm{y^{\frac\alpha2}\nabla(u-u_h)}_{L^2(C)}\bigl\{\norm{y^{\frac\alpha2}\nabla(u-v_h)}_{L^2(C_Y)}+\norm{y^{\frac\alpha2}\nabla
    u}_{L^2(C\setminus C_Y)}\bigr\}.
  \end{align*}
  Dividing by $\norm{y^{\frac\alpha2}\nabla(u-u_h)}_{L^2(C)}$ ends the proof.
\end{proof}

Whereas the last term in the estimate of Lemma~\ref{lem:bestapprox} can be treated by means of
Proposition~\ref{prop:truncation}, we have to estimate the first term. To this end, we introduce the
following approximation operators separately for the $x$ and $y$ variables:

By $\pi_x\colon L^2(\Om)\to V_h$, we denote the $L^2$ projection with respect to the $x$ variable on
$\Omega$. For the interpolation with respect to the $y$ direction, we consider each interval $I_m$
separately. For fixed $m\in\set{1,2,\dots,M}$, let $q\in\N$ and $y_{m-1}=x_0,x_1,\ldots,x_q=y_m$
be the Gauss-Lobatto points in $I_m$ and let $l_{i,q}$ denote the corresponding Lagrange polynomials
of order $q$. Then, we define the Gauss-Lobatto interpolant $i_q\colon
C^0(\overline{I_m})\to\PP_q(I_m)$ by
\[
  (i_q\xi)(y)=\sum_{i=0}^q\xi(x_i)l_{i,q}(y).
\]
Further, we define an interpolant $\tilde i_q$ which admits $(\tilde i_q\xi)(y_m)=0$ given by
\[
  (\tilde i_q\xi)(y)=\sum_{i=0}^{q-1}\xi(x_i)l_{i,q}(y).
\]
Based on this, we define the interpolation $i_y^p\colon C^0((0,Y])\to V_M$ for a linear
degree vector $p=(p_1,p_2,\dots,p_M)\in\N^M$ by
\begin{equation}\label{eq:def_i}
  (i_y^p\xi)\bigr\rvert_{I_m}=
  \begin{cases}
    \xi(y_1)&\text{on }I_1,\\
    i_{p_m}\xi\bigr\rvert_{I_m}&\text{on }I_m,~m=2,3,\dots,M-1,\\
    \tilde i_{p_M}\xi\bigr\rvert_{I_M}&\text{on }I_M.
  \end{cases}
\end{equation}
In particular, it holds that $i_y^p\xi$ is constant on $I_1$ and $(i_y^p\xi)(Y)=0$.
For any function $v\in \mathring{H}^1_L(C,y^\alpha)$, we set
\[
  (\pi_x v)(\cdot,y)=\pi_x v(\cdot,y)\quad\text{for a.a.\ }y\in (0,\infty).
\]
Moreover, for the solution $u$ of~\eqref{eq:weakExtension}, the application of $i_y^p$ is defined as
\begin{equation}\label{def:ip}
  (i_y^pu)(x,\cdot)=\sum_{k=1}^\infty \U_k\phi_k(x)i_y^p\psi_{s,k}(\cdot)\quad\text{for a.a.\ } x\in\Om,
\end{equation}
which is well-defined because $\psi_{s,k}\in C^0(\R_+)$.  Thus, by construction, we have
$\pi_xi_y^pu\in V_{h,M}$.

For the first term on the right-hand side of the estimate in Lemma~\ref{lem:bestapprox}, we have the
following result.

\begin{lemma}\label{lem:generalint}
  Let  $u\in \mathring{H}^1_L(C,y^\alpha)$ be the solution of~\eqref{eq:weakExtension}. Then, it
  holds
  \[
    \min_{v_h\in V_{h,M}}\norm{y^{\frac\alpha2}\nabla( u-v_h)}_{L^2(C_Y)}\le
    \norm{y^{\frac\alpha2}\nabla(u-\pi_x u)}_{L^2(C_Y)}+
    c\norm{y^{\frac\alpha2}\nabla(u-i_y^pu)}_{L^2(C_Y)}.
  \]
\end{lemma}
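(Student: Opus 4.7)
The plan is to exhibit a specific $v_h\in V_{h,M}$, namely $v_h=\pi_x i_y^p u$, and estimate the error by a triangle inequality; the bulk of the work then reduces to showing that the $L^2(\Omega)$-projection $\pi_x$ is stable in the weighted gradient norm.

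First I would verify that $\pi_x i_y^p u$ actually belongs to $V_{h,M}$. On each subinterval $I_m$ with $m\ge 2$, the definition \eqref{def:ip} combined with \eqref{eq:def_i} gives
\[
  (i_y^p u)(x,y)=\sum_{i} u(x,x_i)\,l_{i,p_m}(y),
\]
so that
\[
  (\pi_x i_y^p u)(x,y)=\sum_{i} \bigl[\pi_x u(\cdot,x_i)\bigr](x)\,l_{i,p_m}(y),
\]
which is a finite sum of tensor products of functions in $V_h$ and $V_M$; the same holds on $I_1$ (where $i_y^p u$ is constant in $y$) and on $I_M$ (where the top boundary condition is built into $\tilde i_{p_M}$). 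Since $\pi_x$ preserves the homogeneous Dirichlet condition on $\partial\Omega$ and $i_y^p u(\cdot,Y)=0$, the candidate lies in $V_{h,M}$. Then splitting
\[
  u-\pi_x i_y^p u=(u-\pi_x u)+\pi_x(u-i_y^p u)
\]
and applying the triangle inequality reduces the claim to showing
\[
  \norm{y^{\frac\alpha2}\nabla \pi_x(u-i_y^p u)}_{L^2(C_Y)}\le c\,\norm{y^{\frac\alpha2}\nabla(u-i_y^p u)}_{L^2(C_Y)}.
\]

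For this, I would separate the $x$- and $y$-components of the gradient and invoke Fubini, using that the weight $y^\alpha$ depends only on $y$. Since $\pi_x$ acts in $x$ alone, it commutes with $\partial_y$, yielding $\partial_y \pi_x w=\pi_x\partial_y w$ and hence, by $L^2$-stability of $\pi_x$,
\[
  \int_0^Y y^\alpha\norm{\partial_y\pi_x w(\cdot,y)}_{L^2(\Omega)}^2\,dy\le\int_0^Y y^\alpha\norm{\partial_y w(\cdot,y)}_{L^2(\Omega)}^2\,dy.
\]
For the $x$-derivatives, I would apply the classical $H^1_0$-stability of the $L^2$-projection on quasi-uniform meshes (available since $\TT_\Omega$ is quasi-uniform and $\Omega$ is convex polygonal/polyhedral), giving $\norm{\nabla_x\pi_x g}_{L^2(\Omega)}\le c\norm{\nabla_x g}_{L^2(\Omega)}$ for every $g\in H^1_0(\Omega)$, and integrating against the weight $y^\alpha$ in the $y$-direction. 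Setting $w=u-i_y^p u$ and summing the two contributions yields the stability estimate displayed above.

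The main subtlety is that the stability argument needs the $H^1$-bound of $\pi_x$ to hold uniformly in $h_\Omega$, so the proof implicitly relies on quasi-uniformity of $\TT_\Omega$, which is part of the standing assumptions in Section~\ref{sec:Disc}; otherwise all steps are either definitional (verifying that $v_h\in V_{h,M}$) or direct applications of the triangle inequality and Fubini. Combining everything gives
\[
  \min_{v_h\in V_{h,M}}\norm{y^{\frac\alpha2}\nabla(u-v_h)}_{L^2(C_Y)}\le \norm{y^{\frac\alpha2}\nabla(u-\pi_x u)}_{L^2(C_Y)}+c\,\norm{y^{\frac\alpha2}\nabla(u-i_y^p u)}_{L^2(C_Y)},
\]
as asserted.
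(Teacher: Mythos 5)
Your proposal is correct and follows essentially the same route as the paper: choose $v_h=\pi_x i_y^p u$, split via the intermediate function $\pi_x u$, commute $\pi_x$ with $\partial_y$, and invoke $L^2$- and $H^1$-stability of the $L^2$-projection (the paper's "well known stability estimates", which indeed rest on quasi-uniformity of $\TT_\Om$). Your explicit verification that $\pi_x i_y^p u\in V_{h,M}$ is a detail the paper leaves to the phrase "by construction", but it adds nothing substantively different.
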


\begin{proof}
  First, we set $v_h=\pi_xi_y^pu\in V_{h,M}$. Then, by introducing $\pi_x u$ as an intermediate
  function, we deduce
  \[
    \min_{v_h\in V_{h,M}}\norm{y^{\frac\alpha2}\nabla( u-v_h)}_{L^2(C_Y)}\le
    \norm{y^{\frac\alpha2}\nabla(u-\pi_x u)}_{L^2(C_Y)}+
    \norm{y^{\frac\alpha2}\nabla\pi_x(u-i_y^pu)}_{L^2(C_Y)}.
  \]
  According to the definition of $\pi_x$, we have $\partial_y \pi_x (u-i_y^pu)=\pi_x\partial_y
  (u-i_y^pu)$ almost everywhere in $C_Y$. As a consequence, we deduce by well known stability
  estimates for the $L^2$ projection $\pi_x$
  \begin{align*}
    \norm{y^{\frac\alpha2}\nabla\pi_x(u-i_y^pu)}_{L^2(C_Y)}&=\int_{0}^{Y}y^\alpha
    \int_\Omega\bigl\{\abs{\nabla_x\pi_x(u-i_y^pu)}^2+\abs{\partial_y\pi_x(u-i_y^pu)}^2\bigr\}\,dx\,dy\\
    &=\int_{0}^{Y}y^\alpha
    \int_\Omega\bigl\{\abs{\nabla_x\pi_x(u-i_y^pu)}^2+\abs{\pi_x\partial_y (u-i_y^pu)}^2\bigr\}\,dx\,dy\\
    &\leq c \int_{0}^{Y}y^\alpha
    \int_\Omega\bigl\{\abs{\nabla_x(u-i_y^pu)}^2+\abs{\partial_y (u-i_y^pu)}^2\bigr\}\,dx\,dy,
  \end{align*}
  which shows the assertion.
\end{proof}

\begin{lemma}\label{lem:Clement}
  For $\F\in\HH^{1-s}(\Om)$, let  $u\in \mathring{H}^1_L(C,y^\alpha)$ be the solution
  of~\eqref{eq:weakExtension}. Then, it holds
  \[
    \norm{y^{\frac\alpha2}\nabla(u-\pi_x u)}_{L^2(C_Y)} \le ch_\Omega \norm{\F}_{\HH^{1-s}(\Om)}.
  \]
\end{lemma}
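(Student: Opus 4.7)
My plan is to split the gradient into its tangential and extended components and estimate each separately, exploiting the crucial fact that $\pi_x$ commutes with $\partial_y$.

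I would begin by writing
\[
  \norm{y^{\frac\alpha2}\nabla(u-\pi_x u)}_{L^2(C_Y)}^2
  = \norm{y^{\frac\alpha2}\nabla_x(u-\pi_x u)}_{L^2(C_Y)}^2 + \norm{y^{\frac\alpha2}\partial_y(u-\pi_x u)}_{L^2(C_Y)}^2
\]
and treat the two terms independently. For the $y$-derivative term, since $\pi_x$ acts only in $x$, it commutes with $\partial_y$, so $\partial_y(u-\pi_x u)=(I-\pi_x)\partial_y u$ pointwise in $y$. The standard $L^2$ error estimate for the $L^2$ projection onto $V_h$ (valid since $\TT_\Om$ is quasi-uniform) gives
\[
  \norm{(I-\pi_x)\partial_y u(\cdot,y)}_{L^2(\Om)} \le c\,h_\Om \norm{\nabla_x \partial_y u(\cdot,y)}_{L^2(\Om)}
\]
for a.a.\ $y\in(0,Y)$. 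Multiplying by $y^\alpha$ and integrating over $(0,Y)$ yields
\[
  \norm{y^{\frac\alpha2}\partial_y(u-\pi_x u)}_{L^2(C_Y)} \le c\,h_\Om \norm{y^{\frac\alpha2}\nabla_x\partial_y u}_{L^2(C)},
\]
which by Proposition~\ref{prop:regU} is bounded by $c\,h_\Om\norm{\F}_{\HH^{1-s}(\Om)}$.

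For the tangential term, the argument is similar but requires an $H^1$-type error estimate for the $L^2$ projection. Since $\TT_\Om$ is quasi-uniform, $\pi_x$ is $H^1$-stable, and combining with an interpolation operator into $V_h$ (e.g.\ Scott--Zhang) one obtains the standard bound
\[
  \norm{\nabla_x(v-\pi_x v)}_{L^2(\Om)}\le c\,h_\Om\norm{\nabla_x^2 v}_{L^2(\Om)}
\]
for any $v\in H^2(\Om)\cap H^1_0(\Om)$. Applying this pointwise in $y$ to $u(\cdot,y)$, multiplying by $y^\alpha$, integrating, and using Proposition~\ref{prop:regU} once more yields
\[
  \norm{y^{\frac\alpha2}\nabla_x(u-\pi_x u)}_{L^2(C_Y)}\le c\,h_\Om \norm{y^{\frac\alpha2}\nabla_x^2 u}_{L^2(C)}\le c\,h_\Om\norm{\F}_{\HH^{1-s}(\Om)}.
\]
Combining the two bounds gives the claim.

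The main technical point is the pointwise (in $y$) $H^1$-error estimate for $\pi_x$ in the $x$-direction: this uses quasi-uniformity of $\TT_\Om$ (for $H^1$-stability of the $L^2$ projection) together with convexity of $\Om$ (implicitly, through Proposition~\ref{prop:regU}, to control the second $x$-derivatives). Everything else is routine: the identity $\partial_y\pi_x=\pi_x\partial_y$ is the structural reason why no interpolation in $y$ enters this lemma, and Proposition~\ref{prop:regU} delivers both weighted second-order terms $\norm{y^{\alpha/2}\nabla_x^2u}_{L^2(C)}$ and $\norm{y^{\alpha/2}\partial_y\nabla_xu}_{L^2(C)}$ simultaneously with the required $\HH^{1-s}(\Om)$-norm on the data.
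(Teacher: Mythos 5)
Your proof is correct and follows essentially the same route as the paper: split $\nabla$ into $\nabla_x$ and $\partial_y$, use that $\pi_x$ commutes with $\partial_y$, apply the classical $L^2$- and $H^1$-error estimates for the $L^2$ projection pointwise in $y$ (the latter via $H^1$-stability on the quasi-uniform mesh), and conclude with Proposition~\ref{prop:regU}. The paper compresses all of this into one display under the heading ``classical estimates for the $L^2$ projection''; you have merely made the ingredients explicit.
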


\begin{proof}
  Analogously to the foregoing proof, by classical estimates for the $L^2$ projection
  $\pi_x$, we obtain
  \begin{align*}
    \norm{y^{\frac\alpha2}\nabla(u-\pi_x u)}_{L^2(C_Y)}^2&=\int_{0}^{Y}y^\alpha
    \int_\Omega\bigl\{\abs{\nabla_x(u-\pi_xu)}^2+\abs{\partial_y(u-\pi_xu)}^2\bigr\}\,dx\,dy\\
    &=\int_{0}^{Y}y^\alpha
    \int_\Omega\bigl\{\abs{\nabla_x(u-\pi_xu)}^2+\abs{\partial_yu-\pi_x(\partial_yu)}^2\bigr\}\,dx\,dy\\
    &\leq ch_\Omega^2\int_{0}^{Y}y^\alpha
    \int_\Omega\bigl\{\abs{\nabla^2_xu}^2+\abs{\nabla_x\partial_yu}^2\bigr\}\,dx\,dy\\
    &=c h_\Om^2\bigl\{
  \norm{y^{\frac\alpha2}\nabla_x^2u}_{L^2(C_Y)}^2+\norm{y^{\frac\alpha2}\partial_y\nabla_xu}_{L^2(C_Y)}^2\bigr\}.
\end{align*}
Then, Proposition~\ref{prop:regU} yields the assertion.
\end{proof}

Next, we are concerned with estimates for the second term in Lemma~\ref{lem:generalint}. To this
end, we consider the interpolation error on each subinterval $I_m\in\II_Y$. Employing the
decomposition from Proposition~\ref{prop:series}  and the definition of the eigenfunctions
$\varphi_{k}$, we obtain
\begin{equation}\label{eq:inty}
  \begin{aligned}
    \norm{y^{\frac\alpha2}\nabla(u-i_y^pu)}_{L^2(\Omega\times I_m)}^2&=\int_{I_m}y^\alpha
    \int_\Omega\bigl\{\abs{\nabla_x(u-i_y^pu)}^2+\abs{\partial_y(u-i_y^pu)}^2\bigr\}\,dx\,dy\\
    &=\int_{I_m}y^\alpha\sum_{k=1}^\infty \U_k^2 \bigl\{\lambda_k(\psi_{s,k}-i_y^p\psi_{s,k})^2+
  ((\psi_{s,k}-i_y^p\psi_{s,k})')^2\bigr\}\,dy\\
  &=\sum_{k=1}^\infty
  \U_k^2\bigl\{\lambda_k\norm{y^{\frac\alpha2}(\psi_{s,k}-i_y^p\psi_{s,k})}^2_{L^2(I_m)} +
  \norm{y^{\frac\alpha2}(\psi_{s,k}-i_y^p\psi_{s,k})'}^2_{L^2(I_m)}\bigr\}.
\end{aligned}
\end{equation}
By using this identity in the following two subsections, we will estimate the terms
\[
  \norm{y^{\frac\alpha2}\nabla(u-i_y^pu)}_{L^2(\Omega\times I_m)}^2,\quad i=1,2,\dots,M
\]
for the two types of triangulations $\II_Y$ and polynomial spaces $V_M$ introduced in the
Section~\ref{sec:lin_def} and~\ref{sec:p_def}.

Thereby, in Section~\ref{sec:lin}, we will mainly recover the results of~\cite{Nochetto2015}. The
reason for doing this is twofold. First, we are able to slightly improve the grading condition from
$\mu<\frac23 s$ (in our notation) of~\cite[Section~5.2]{Nochetto2015} to $\mu<s$. However, the main
reason to analyze the $h$-FEM on graded meshes before developing the analysis for the considered
$hp$-method is, that the techniques we use are almost identical for both cases, but the details are
simpler for $h$-FEM, of course.

Later, in Section~\ref{sec:p}, we will analyze the $hp$-method introduced in
Section~\ref{sec:p_def}, which yields a slightly improved rate of convergence ($h_\Om$ vs.
$h_\Om\abs{\ln h_\Om}^s$) compared to $h$-FEM but a drastic reduction of the computational
complexity in terms of degrees of freedom from $O(\NO^{1+1/d})$ to $O(\NO(\ln\NO)^2)$.

Note that in the following estimates, we will track the dependence on $Y$ explicitly since as a
last step $Y$ will be chosen $h$-dependent.

\subsection{Graded meshes and \boldmath{$h$}-FEM}\label{sec:lin}

As announced, we are concerned with estimates for \eqref{eq:inty} for the discretization defined in
Section~\ref{sec:lin_def}. For simplicity, in this subsection, we will write $i_yu$ for $i_y^pu$,
since we have $p=(1,1,\dots,1)$ here.

\begin{lemma}[Estimate on $I_1$]\label{lemma:linI1}
  For $\F\in L^2(\Om)$, let  $u\in \mathring{H}^1_L(C,y^\alpha)$ be the solution
  of~\eqref{eq:weakExtension} and let $M\ge h_\Om^{-1}$. Then, it holds
  \[
    \norm{y^{\frac\alpha2}\nabla(u-i_yu)}_{L^2(\Omega\times I_1)}^2\le
    ch_\Omega^{\frac{2s}\mu}Y^{2s}\norm{\F}^2_{L^2(\Omega)}.
  \]
\end{lemma}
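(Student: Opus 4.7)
The plan is to start from the identity \eqref{eq:inty} specialized to $m=1$. By the definition~\eqref{eq:def_i} of the interpolant on $I_1$, the function $i_y\psi_{s,k}$ equals the constant $\psi_{s,k}(y_1)$ on $I_1$, so $(\psi_{s,k}-i_y\psi_{s,k})'=\psi_{s,k}'$ on $I_1$. Thus \eqref{eq:inty} reduces to controlling, for each $k$, the two quantities
\[
  T_k^{(1)}:=\lambda_k\int_0^{y_1}y^\alpha\bigl(\psi_{s,k}(y)-\psi_{s,k}(y_1)\bigr)^2\,dy,\qquad
  T_k^{(2)}:=\int_0^{y_1}y^\alpha\bigl(\psi_{s,k}'(y)\bigr)^2\,dy,
\]
and showing that after multiplication by $\U_k^2$ and summation, both are bounded by $c\,y_1^{2s}\norm{\F}_{L^2(\Om)}^2$. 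The last step is merely to invoke $y_1=M^{-1/\mu}Y$ from Lemma~\ref{lemma:gradedmesh} together with $M\ge h_\Om^{-1}$ to get $y_1^{2s}\le h_\Om^{2s/\mu}Y^{2s}$.

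For the derivative term $T_k^{(2)}$, I would apply Corollary~\ref{cor:psi-2} with $n=1$ and $r=0$, which yields $\abs{\psi_{s,k}'(y)}^2\le c\,\lambda_k^{2s}y^{4s-2}$. Combined with $\alpha=1-2s$, this gives the elementary integral
\[
  T_k^{(2)}\le c\,\lambda_k^{2s}\int_0^{y_1}y^{2s-1}\,dy\le c\,\lambda_k^{2s}y_1^{2s},
\]
and summing using $\U_k^2\lambda_k^{2s}=\F_k^2$ (Proposition~\ref{prop:SolUF}) delivers the bound $c\,y_1^{2s}\norm{\F}_{L^2(\Om)}^2$ for this part.

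The delicate term is $T_k^{(1)}$, where a naive use of Corollary~\ref{cor:psi-2} with $r=0$ produces an extra factor $\lambda_k$ that spoils the $L^2$-sum. The trick is to split $(\psi_{s,k}(y)-\psi_{s,k}(y_1))^2\le 2\abs{\psi_{s,k}(y)}^2+2\abs{\psi_{s,k}(y_1)}^2$ and then apply Corollary~\ref{cor:psi-2} with $n=0$ and $r=1$, which gives the pointwise bound $\abs{\psi_{s,k}(y)}\le c\,\lambda_k^{s-1/2}y^{2s-1}$. Integrating each half separately against $y^\alpha$ produces
\[
  \int_0^{y_1}y^\alpha\bigl(\psi_{s,k}(y)-\psi_{s,k}(y_1)\bigr)^2\,dy\le c\,\lambda_k^{2s-1}y_1^{2s},
\]
since both $\int_0^{y_1}y^{2s-1}\,dy$ and $y_1^{4s-2}\int_0^{y_1}y^{1-2s}\,dy$ equal a constant times $y_1^{2s}$. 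Multiplying by $\lambda_k$ raises the exponent to the clean value $\lambda_k^{2s}$, so that $\sum_k\U_k^2\lambda_k^{2s}y_1^{2s}=y_1^{2s}\norm{\F}_{L^2(\Om)}^2$.

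The main obstacle is precisely this choice of exponent $r$ in Corollary~\ref{cor:psi-2}: $r=0$ is natural for the derivative term and yields the correct $L^2$-summability, but it fails for the value-difference term, which instead requires $r=1$ together with the pointwise triangle-inequality splitting to absorb the extra $\lambda_k$ factor. Everything else is bookkeeping with the weight $y^\alpha$ and the scaling $y_1=M^{-1/\mu}Y\le h_\Om^{1/\mu}Y$.
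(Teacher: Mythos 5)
Your treatment of the derivative term $T_k^{(2)}$ coincides with the paper's (Corollary~\ref{cor:psi-2} with $n=1$, $r=0$, then the weight bookkeeping and Proposition~\ref{prop:SolUF}), and the final scaling step $y_1=M^{-1/\mu}Y\le h_\Om^{1/\mu}Y$ is also as in the paper. The problem is the value-difference term $T_k^{(1)}$. Corollary~\ref{cor:psi-2} (and the underlying Lemma~\ref{lem:psi-2}) is stated for $n\in\N$, i.e.\ $n\ge1$, and this restriction is not cosmetic: for $n=0$ and $r=1$ the asserted bound reads $\abs{\psi_{s,k}(y)}\le c\,\lambda_k^{s-\frac12}y^{2s-1}$, which is \emph{false} for $s>\frac12$, since $\psi_{s,k}(0)=\psi_s(0)=1$ by Lemma~\ref{lem:psi-0} while the right-hand side tends to $0$ as $y\to0$. (Internally, the proof of Lemma~\ref{lem:psi-2} needs $\nu(m,n)=n-m-s>0$, which fails precisely at $n=0$.) Your triangle-inequality split $2\abs{\psi_{s,k}(y)}^2+2\abs{\psi_{s,k}(y_1)}^2$ then rests entirely on this invalid pointwise bound, and it also throws away the cancellation in $\psi_{s,k}(y)-\psi_{s,k}(y_1)$ that is the whole point of subtracting the interpolant; with only the true $n=0$ information $\abs{\psi_{s,k}}\le1$ the split cannot produce the factor $\lambda_k^{2s-1}y_1^{2s}$ for $s>\frac12$.

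The paper keeps the difference intact and removes the zeroth-order term by a weighted integration by parts on $I_1$ (writing $y^\alpha=\frac{1}{\alpha+1}(y^{\alpha+1})'$, with vanishing boundary terms because $\alpha+1>0$ and the difference vanishes at $y_1$), which gives the Hardy-type inequality
\[
  \norm{y^{\frac\alpha2}(\psi_{s,k}-\psi_{s,k}(y_1))}_{L^2(I_1)}\le \tfrac{2}{\alpha+1}\,
  \norm{y^{\frac\alpha2+1}\psi'_{s,k}}_{L^2(I_1)},
\]
after which Corollary~\ref{cor:psi-2} with $n=1$, $r=1$ applies legitimately and yields exactly your target $c\,\lambda_k^{2s-1}y_1^{2s}$. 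An equivalent repair of your argument is to write $\psi_{s,k}(y)-\psi_{s,k}(y_1)=-\int_y^{y_1}\psi'_{s,k}(t)\,dt$ and insert the $n=1$, $r=1$ bound on $\psi'_{s,k}$ before integrating against $y^\alpha$; either way, the estimate must be routed through the first derivative rather than through a pointwise bound on $\psi_{s,k}$ itself.
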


\begin{proof}
  First, we observe that the interpolant $i_y\psi_{s,k}$ is constant on $I_1$. By its
  definition~\eqref{eq:def_i}, it holds $(i_y \psi_{s,k})\bigr\rvert_{I_1}=\psi_{s,k}(y_1)$.
  Integration by parts and noting that
  $y^{\alpha+1}(\psi_{s,k}-\psi_{s,k}(y_1))^2\bigr\rvert_0^{y_1}=0$ yields
  \begin{align*}
    \norm{y^{\frac\alpha2}(\psi_{s,k}-\psi_{s,k}(y_1))}_{L^2(I_1)}^2
    &=\int_{I_1}y^\alpha(\psi_{s,k}-\psi_{s,k}(y_1))^2\,dy\\
    &=-\frac2{\alpha+1}\int_{I_1} y^{\alpha+1}(\psi_{s,k}-\psi_{s,k}(y_1))\psi'_{s,k}\,dy\\
    &\le
    \frac2{\alpha+1}\norm{y^{\frac\alpha2}(\psi_{s,k}-\psi_{s,k}(y_1))}_{L^2(I_1)}\norm{y^{\frac\alpha2+1}\psi'_{s,k}}_{L^2(I_1)}.
  \end{align*}
  Then, dividing by $\norm{y^{\frac\alpha2}(\psi_{s,k}-i_y\psi_{s,k})}_{L^2(I_1)}$ implies
  \[
    \norm{y^{\frac\alpha2}(\psi_{s,k}-\psi_{s,k}(y_1))}_{L^2(I_1)}\le c \norm{y^{\frac\alpha2+1}\psi'_{s,k}}_{L^2(I_1)}
  \]
  with $c=\frac1{1-s}$.  By means of Corollary~\ref{cor:psi-2} with $n=1$ and $r=1$, we obtain
  \[
    \norm{y^{\frac\alpha2}(\psi_{s,k}-\psi_{s,k}(y_1))}_{L^2(I_1)}\le c
    \norm{y^{\frac\alpha2+1}\psi_{s,k}'}_{L^2(I_1)}
    \le c\lambda_{k}^{s-\frac12}\norm{y^{s-\frac12}}_{L^2(I_1)}.
  \]
  Hence, we get by Lemma~\ref{lemma:gradedmesh} together with the assumption on $M$ that
  \[
    \lambda_k\norm{y^{\frac\alpha2}(\psi_{s,k}-\psi_{s,k}(y_1))}^2_{L^2(I_1)}\le
    c\lambda_{k}^{2s}h_1^{2s}\le c\lambda_{k}^{2s}h_\Omega^{\frac{2s}\mu}Y^{2s}.
  \]

  In a similar fashion, we obtain by  Corollary~\ref{cor:psi-2} with $n=1$ and $r=0$ the relation
  \[
    \norm{y^{\frac\alpha2}(\psi_{s,k}-\psi_{s,k}(y_1))'}^2_{L^2(I_1)}=\norm{y^{\frac\alpha2}\psi'_{s,k}}^2_{L^2(I_1)}\le
    c\lambda_k^{2s}\norm{y^{s-\frac12}}^2_{L^2(I_1)}\le c\lambda_k^{2s}h_1^{2s}\le
    c\lambda_k^{2s}h_\Omega^{\frac{2s}\mu}Y^{2s},
  \]
  where we again used Lemma \ref{lemma:gradedmesh} in the last step.

  The previous estimates together with \eqref{eq:inty} and Proposition~\ref{prop:SolUF} yield
  \[
    \norm{y^{\frac\alpha2}\nabla(u-i_yu)}_{L^2(\Omega\times I_1)}^2\leq
    ch_\Omega^{\frac{2s}\mu}Y^{2s}\sum_{k=1}^\infty\lambda_k^{2s}\U_k^2
    =ch_\Omega^{\frac{2s}\mu}Y^{2s}\sum_{k=1}^\infty\F_k^2,
  \]
  which implies the assertion.
\end{proof}

\begin{lemma}[Estimates on $I_m$ for $2\le m\le M-1$]\label{lemma:1toN}
  For $\F\in L^2(\Om)$, let $u\in \mathring{H}^1_L(C,y^\alpha)$ be the solution
  of~\eqref{eq:weakExtension}. Moreover, let $M\ge h_\Om^{-1}$ and $\mu\neq s$. Then, it holds
  \[
    \norm{y^{\frac\alpha2}\nabla(u-i_yu)}_{L^2(\Omega\times I_m)}^2\leq
    ch_\Omega^{2}Y^{2\mu}\bigl\{y_m^{2(s-\mu)}-y_{m-1}^{2(s-\mu)}\bigr\}\norm{\F}^2_{L^2(\Omega)}.
  \]
\end{lemma}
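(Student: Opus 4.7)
The plan is to mimic the strategy of Lemma~\ref{lemma:linI1}: starting from the series representation~\eqref{eq:inty}, the weighted $H^1$-error on $\Om\times I_m$ decomposes as
\[
\norm{y^{\frac\alpha2}\nabla(u-i_yu)}^2_{L^2(\Om\times I_m)} = \sum_{k=1}^\infty \U_k^2\bigl\{\lambda_k\norm{y^{\frac\alpha2}(\psi_{s,k}-i_y\psi_{s,k})}^2_{L^2(I_m)} + \norm{y^{\frac\alpha2}(\psi_{s,k}-i_y\psi_{s,k})'}^2_{L^2(I_m)}\bigr\}.
\]
I would bound the bracket uniformly in $k$ by $c\lambda_k^{2s}$ times a common geometric factor depending on $h_\Om$, $Y$, $y_m$, and $y_{m-1}$; summing in $k$ using $\sum_k \lambda_k^{2s}\U_k^2 = \norm{\F}_{L^2(\Om)}^2$ from Proposition~\ref{prop:SolUF} then yields the assertion.

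For $m\ge 2$, the graded construction gives $y_{m-1}/y_m = ((m-1)/m)^{1/\mu}\ge 2^{-1/\mu}$, so $y$ is comparable to $y_m$ on $I_m$ and the weight $y^\alpha$ is essentially constant there. Since $\psi_{s,k}-i_y\psi_{s,k}$ vanishes at both endpoints of $I_m$, a weighted Poincar\'e inequality combined with the $L^2$-stability of the linear Lagrange interpolant yields $\norm{y^{\frac\alpha2}(\psi_{s,k}-i_y\psi_{s,k})}_{L^2(I_m)}\le c\,h_m\norm{y^{\frac\alpha2}\psi_{s,k}'}_{L^2(I_m)}$, while the classical one-dimensional interpolation estimate gives $\norm{y^{\frac\alpha2}(\psi_{s,k}-i_y\psi_{s,k})'}_{L^2(I_m)}\le c\,h_m\norm{y^{\frac\alpha2}\psi_{s,k}''}_{L^2(I_m)}$.

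Next I would invoke Corollary~\ref{cor:psi-2} pointwise. Taking $n=1,\ r=1$ gives $\abs{\psi_{s,k}'(y)}\le c\lambda_k^{s-\frac12}y^{2s-2}$, whereas $n=2,\ r=0$ gives $\abs{\psi_{s,k}''(y)}\le c\lambda_k^s y^{2s-2}$. Substituting and noting that $\alpha+4s-4 = 2s-3$, both contributions collapse to the common bound
\[
\lambda_k\norm{y^{\frac\alpha2}(\psi_{s,k}-i_y\psi_{s,k})}^2_{L^2(I_m)} + \norm{y^{\frac\alpha2}(\psi_{s,k}-i_y\psi_{s,k})'}^2_{L^2(I_m)} \le c\,\lambda_k^{2s}\,h_m^2\int_{I_m} y^{2s-3}\,dy.
\]
Lemma~\ref{lemma:gradedmesh} combined with $M\ge h_\Om^{-1}$ gives $h_m^2\le c\,Y^{2\mu}h_\Om^2 y_m^{2(1-\mu)}$, and the comparability of $y$ to $y_m$ on $I_m$ allows absorbing $y_m^{2(1-\mu)}$ into the integrand as $y^{2(1-\mu)}$, leading to
\[
h_m^2 \int_{I_m} y^{2s-3}\,dy \le c\,Y^{2\mu}h_\Om^2 \int_{I_m} y^{2s-2\mu-1}\,dy = \frac{c}{2(s-\mu)}\,Y^{2\mu}h_\Om^2 \bigl(y_m^{2(s-\mu)}-y_{m-1}^{2(s-\mu)}\bigr),
\]
where the last integral evaluation is legitimate precisely because of the assumption $\mu\neq s$. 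Summing over $k$ closes the argument.

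The main subtlety is to obtain the correct power $\lambda_k^{2s}$ in \emph{both} the $L^2$- and the $H^1$-contribution. A naive application of Corollary~\ref{cor:psi-2} with $r=0$ throughout would leave $\lambda_k^{2s+1}$ in the $L^2$-part, which cannot be absorbed by the Parseval-type identity of Proposition~\ref{prop:SolUF}. Routing the $L^2$-part through Poincar\'e to $\psi_{s,k}'$ and then invoking the corollary with $r=1$ removes the extraneous factor of $\lambda_k$ and brings the $L^2$-part to the same footing as the $H^1$-part; the careful bookkeeping of the powers of $h_\Om$, $Y$, and $y_m$ in the grading step is the remaining technical point.
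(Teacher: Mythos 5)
Your proof is correct and follows essentially the same route as the paper: the decomposition \eqref{eq:inty}, the Poincar\'e-type bound routing the $L^2$-part through $\psi_{s,k}'$, Corollary~\ref{cor:psi-2} with $r=1$ for $\psi_{s,k}'$ and $r=0$ for $\psi_{s,k}''$ so that both contributions carry the factor $\lambda_k^{2s}$, the mesh-grading bound $h_m\le c\,y_m^{1-\mu}Y^\mu h_\Om$, and Parseval via Proposition~\ref{prop:SolUF}. The only cosmetic difference is that you freeze the weight and reinsert $y_m^{2(1-\mu)}$ into the integrand at the end, whereas the paper absorbs $h_m^2$ into the weighted norm $\norm{y^{\frac\alpha2+1-\mu}\cdot}_{L^2(I_m)}$ before applying the pointwise bounds; the two bookkeeping schemes are equivalent.
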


\begin{proof}
  For $m\ge2$, we have that $y_{m-1}\le y_m \le 2^{\frac1\mu} y_{m-1}$. It holds
  $i_y\psi_{s,k}=i_1\psi_{s,k}$ such that we conclude with standard estimates for the linear
  Lagrange interpolant $i_1$, Lemma~\ref{lemma:gradedmesh}, and the assumption on $M$ that
  \begin{align*}
    \norm{y^{\frac\alpha2}(\psi_{s,k}-i_1\psi_{s,k})}^2_{L^2(I_m)}
    &\leq c y_{m}^\alpha\norm{\psi_{s,k}-i_1\psi_{s,k}}^2_{L^2(I_m)} \leq cy_{m}^\alpha
    h_m^2\norm{\psi'_{s,k}}^2_{L^2(I_m)} \\
    &\leq cy_{m}^{\alpha+2-2\mu}h_\Omega^2Y^{2\mu}\norm{\psi'_{s,k}}^2_{L^2(I_m)} \leq
    ch_\Omega^2Y^{2\mu}\norm{y^{\frac\alpha2+1-\mu}\psi'_{s,k}}^2_{L^2(I_m)}.
  \end{align*}
  By using Corollary~\ref{cor:psi-2} with $n=1$ and $r=1$, this implies
  \[
    \lambda_k\norm{y^{\frac\alpha2}(\psi_{s,k}-i_1\psi_{s,k})}^2_{L^2(I_m)}
    \le c\lambda_k^{2s}h_\Omega^2Y^{2\mu}\norm{y^{s-\mu-\frac12}}^2_{L^2(I_m)}
    =c\lambda_k^{2s}h_\Omega^2Y^{2\mu}\bigl\{y_m^{2(s-\mu)}-y_{m-1}^{2(s-\mu)}\bigr\}.
  \]

  Similarly, using Corollary~\ref{cor:psi-2} with $n=1$ and $r=0$, we obtain for the term involving
  the derivative
  \begin{align*}
    \norm{y^{\frac\alpha2}(\psi_{s,k}-i_1\psi_{s,k})'}^2_{L^2(I_m)}
    &\leq ch_\Omega^2Y^{2\mu} \norm{y^{\frac\alpha2+1-\mu}\psi''_{s,k}}^2_{L^2(I_m)}
    \le c\lambda_k^{2s}h_\Omega^2Y^{2\mu}\norm{y^{s-\mu-\frac12}}^2_{L^2(I_m)}\\
    &=c\lambda_k^{2s}h_\Omega^2Y^{2\mu}\bigl\{y_m^{2(s-\mu)}-y_{m-1}^{2(s-\mu)}\bigr\}.
  \end{align*}

  The previous estimates in combination with \eqref{eq:inty} yield
  \[
    \norm{y^{\frac\alpha2}\nabla(u-i_yu)}_{L^2(\Omega\times I_m)}^2
    \leq ch_\Omega^2Y^{2\mu}\bigl\{y_m^{2(s-\mu)}-y_{m-1}^{2(s-\mu)}\bigr\}\sum_{k=1}^\infty\lambda_k^{2s}\U_k^2.
  \]
  Finally, applying Proposition~\ref{prop:SolUF}, we get
  \[
    \norm{y^{\frac\alpha2}\nabla(u-i_yu)}_{L^2(\Omega\times I_m)}^2
    \le ch_\Omega^2Y^{2\mu}\bigl\{y_m^{2(s-\mu)}-y_{m-1}^{2(s-\mu)}\bigr\}\sum_{k=1}^\infty\F_k^2,
  \]
  which states the assertion.
\end{proof}

\begin{lemma}[Estimate on $I_M$] \label{lemma:I_M}
  For $\F\in L^2(\Om)$, let $u\in \mathring{H}^1_L(C,y^\alpha)$ be the solution
  of~\eqref{eq:weakExtension}. Moreover, let $2h_\Om^{-1}\ge M\ge h_\Om^{-1}$, $\mu\neq s$, and
  \[
    Y\geq \max\biggl(\frac{3\abs{\ln h_\Omega}}{\sqrt{\lambda_1}},1\biggr).
  \]
  Then, it holds
  \[
    \norm{y^{\frac\alpha2}\nabla(u-i_yu)}_{L^2(\Omega\times I_M)}^2\leq c h_\Omega^2
    \left(Y^{2\mu}\bigl\{Y^{2(s-\mu)}-y_{M-1}^{2(s-\mu)}\bigr\}+1\right)\norm{\F}^2_{L^2(\Om)}.
  \]
\end{lemma}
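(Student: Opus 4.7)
The plan is to decompose, on the last interval $I_M$, the interpolation error as
\[
  \psi_{s,k} - i_y\psi_{s,k} = (\psi_{s,k} - i_1\psi_{s,k}) + (i_1\psi_{s,k} - \tilde i_1\psi_{s,k}),
\]
where $i_1$ denotes the standard two-point linear Lagrange interpolant on $I_M$ (interpolating at both endpoints $y_{M-1}$ and $Y$). Since $i_y\psi_{s,k} = \tilde i_1\psi_{s,k}$ on $I_M$ by~\eqref{eq:def_i}, the second summand is exactly the linear function $(y-y_{M-1})\psi_{s,k}(Y)/h_M$, which vanishes at $y_{M-1}$ and equals $\psi_{s,k}(Y)$ at $y=Y$. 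This is the only piece that is genuinely new relative to the analysis on interior intervals.

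For the first summand, I would observe that the proof of Lemma~\ref{lemma:1toN} only exploits the inequality $y_{m-1}\le y_m\le 2^{1/\mu}y_{m-1}$, which also holds for $m=M$. Repeating that chain of estimates---linear interpolation on $I_M$, Corollary~\ref{cor:psi-2}, insertion into~\eqref{eq:inty}, and Proposition~\ref{prop:SolUF}---yields the contribution $ch_\Om^2 Y^{2\mu}\{Y^{2(s-\mu)}-y_{M-1}^{2(s-\mu)}\}\norm{\F}^2_{L^2(\Om)}$, matching the first part of the asserted bound.

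For the second summand, plugging the explicit linear form into the analogue of~\eqref{eq:inty} reduces matters to weighted $L^2$-norms of $(y-y_{M-1})/h_M$ and $1/h_M$ on $I_M$. Since $y\sim Y$ on $I_M$, these are controlled by $cY^\alpha h_M(\psi_{s,k}(Y))^2$ and $cY^\alpha h_M^{-1}(\psi_{s,k}(Y))^2$, respectively. Applying Corollary~\ref{cor:psi-3} with $r_1=\max(1-2s,\tfrac12-s)$ gives $(\psi_{s,k}(Y))^2\le cY^{-2r_1}\lambda_k^{-r_1}e^{-\sqrt{\lambda_k}Y}$. After converting $\U_k^2$ to $\F_k^2$ via Proposition~\ref{prop:SolUF}, the surviving polynomial powers of $\lambda_k$ are uniformly bounded by the choice of $r_1$, and the exponential is split as $e^{-\sqrt{\lambda_k}Y}=e^{-\sqrt{\lambda_k}Y/2}\,e^{-\sqrt{\lambda_1}Y/2}$: the first factor absorbs the algebraic $Y$-growth coming from $h_M\sim Yh_\Om$ (via Lemma~\ref{lemma:gradedmesh} and $M\sim h_\Om^{-1}$), while the second is bounded by $h_\Om^{3/2}$ thanks to the hypothesis $Y\ge 3\abs{\ln h_\Om}/\sqrt{\lambda_1}$. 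Collectively, these yield $ch_\Om^2\norm{\F}^2_{L^2(\Om)}$, which is the $+1$ contribution in the asserted bound.

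The main obstacle is the bookkeeping of exponents in the choice of $r_1$: it must simultaneously satisfy the admissibility condition $r_1\ge\min(s,\tfrac12)-s$ of Corollary~\ref{cor:psi-3}, be large enough ($r_1\ge 1-2s$) so that the exponent of $\lambda_k$ surviving the replacement $\U_k=\lambda_k^{-s}\F_k$ is non-positive, and keep the prefactor $Y^{\alpha-2r_1}\le 1$ for $Y\ge 1$. The value $r_1=\max(1-2s,\tfrac12-s)$ threads all three conditions across both regimes $s\le\tfrac12$ and $s>\tfrac12$. A secondary subtlety is that $Y$ is not bounded from above, which forces the exponential splitting above so that the algebraic $Y$-factor stemming from $h_M$ cannot spoil the $h_\Om^2$ bound.
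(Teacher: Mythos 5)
Your proposal is correct and follows essentially the same route as the paper: insert the full Lagrange interpolant $i_1\psi_{s,k}$ (resp.\ $i_{p}$) as an intermediate function on $I_M$, reuse the interior-interval argument of Lemma~\ref{lemma:1toN} for the first piece, and control the boundary correction $\psi_{s,k}(Y)\,l_{1,1}$ via the exponential decay of Corollary~\ref{cor:psi-3} together with the lower bound on $Y$ and $h_M^{-1}\le cY^{-1}M\le ch_\Om^{-1}$. The only (immaterial) difference is bookkeeping: you use the single exponent $r_1=\max(1-2s,\tfrac12-s)$ for both the function and derivative terms, whereas the paper picks $r_1=1-s$ and $r_1=\tfrac12-s$ respectively to match the powers of $Y$ exactly; both choices satisfy the admissibility condition of Corollary~\ref{cor:psi-3} and lead to the same $O(h_\Om^2)$ contribution.
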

\begin{proof}
  We recall that $Y=y_M$ and $i_y\psi_{s,k} = \tilde i_1\psi_{s,k}$ on $I_M$. We introduce the
  Lagrange interpolation $i_1$ on $I_M$ as an intermediate function such that
  \begin{align*}
    \norm{y^{\frac\alpha2}(\psi_{s,k}-\tilde i_1\psi_{s,k})}_{L^2(I_M)}
    &\leq
    \norm{y^{\frac\alpha2}(\psi_{s,k}-i_1\psi_{s,k})}_{L^2(I_M)}+\norm{y^{\frac\alpha2}(i_1\psi_{s,k}-\tilde
    i_1\psi_{s,k})}_{L^2(I_M)}\\
    &=
    \norm{y^{\frac\alpha2}(\psi_{s,k}-i_1\psi_{s,k})}_{L^2(I_M)}+\psi_{s,k}(Y)\norm{y^{\frac\alpha2}l_{1,1}}_{L^2(I_M)}\\
    &\leq \norm{y^{\frac\alpha2}(\psi_{s,k}-i_1\psi_{s,k})}_{L^2(I_M)}+cY^{\frac{\alpha+1}2}\psi_{s,k}(Y),
  \end{align*}
  where we used that $\norm{l_{1,1}}_{L^\infty(I_M)}=\norm{\frac{y-y_{M-1}}{h_M}}_{L^\infty(I_M)}=1$ in the
  last step. As in the proof of Lemma \ref{lemma:1toN}, we deduce
  \[
    \lambda_k\norm{y^{\frac\alpha2}(\psi_{s,k}-i_1\psi_{s,k})}^2_{L^2(I_M)}\leq
    c\lambda_k^{2s}h_\Omega^2Y^{2\mu}\bigl\{Y^{2(s-\mu)}-y_{M-1}^{2(s-\mu)}\bigr\}.
  \]
  Since $Y\geq 1$ by assumption, we obtain using Corollary \ref{cor:psi-3} with
  $r_1=\frac{\alpha+1}2=1-s$ together with the monotonicity of $e^{-\sqrt{\lambda_k}y}$
  \[
    Y^{\frac{\alpha+1}2}\psi_{s,k}(Y)\leq c\lambda_k^{\frac{s-1}2}
    e^{-\frac{\sqrt{\lambda_k}}2Y}\leq c\lambda_k^{s-\frac12}
    e^{-\frac{\sqrt{\lambda_1}}2Y},
  \]
  where we notice that $(\lambda_k)_{k\in\N}$ is a non-decreasing sequence. Combining the previous results yields
  \begin{equation}\label{eq:I_M1}
    \lambda_k\norm{y^{\frac\alpha2}(\psi_{s,k}-\tilde i_1\psi_{s,k})}_{L^2(I_M)}^2\leq
    c\lambda_k^{2s}\left(h_\Omega^2Y^{2\mu}\bigl\{Y^{2(s-\mu)}-y_{M-1}^{2(s-\mu)}\bigr\}+
    e^{-\sqrt{\lambda_1}Y}\right).
  \end{equation}
  Similarly, we deduce
  \begin{align*}
    \norm{y^{\frac\alpha2}(\psi_{s,k}-\tilde i_1\psi_{s,k})'}_{L^2(I_M)}
    &\leq
    \norm{y^{\frac\alpha2}(\psi_{s,k}-i_1\psi_{s,k})'}_{L^2(I_M)}+\norm{y^{\frac\alpha2}(i_1\psi_{s,k}-\tilde
    i_1\psi_{s,k})'}_{L^2(I_M)}\\
    &=
    \norm{y^{\frac\alpha2}(\psi_{s,k}-i_1\psi_{s,k})'}_{L^2(I_M)}+\psi_{s,k}(Y)\norm{y^{\frac\alpha2}l_{1,1}'}_{L^2(I_M)}\\
    &\leq
    \norm{y^{\frac\alpha2}(\psi_{s,k}-i_1\psi_{s,k})'}_{L^2(I_M)}+ch_M^{-\frac12}Y^{\frac{\alpha}2}\psi_{s,k}(Y),
  \end{align*}
  where we used that $\norm{l_{1,1}'}_{L^\infty(I_M)}=\norm{h_M^{-1}}_{L^\infty(I_M)}=h_M^{-1}$ in
  the last step. The first term can again be estimated as in the proof Lemma \ref{lemma:1toN} such
  that
  \[
    \norm{y^{\frac\alpha2}(\psi_{s,k}-i_1\psi_{s,k})'}^2_{L^2(I_m)}\leq
    c\lambda_k^{2s}h_\Omega^2Y^{2\mu}\bigl\{Y^{2(s-\mu)}-y_{M-1}^{2(s-\mu)}\bigr\}.
  \]
  Employing Corollary \ref{cor:psi-3} with $r_1=\frac{\alpha}2=\frac12-s$ together with the
  monotonicity of $e^{-\sqrt{\lambda_k}y}$ yields for $Y\geq 1$
  \[
    h_M^{-\frac12}Y^{\frac{\alpha}2}\psi_{s,k}(Y)\leq
    ch_M^{-\frac12}\lambda_k^{\frac{s}2-\frac14} e^{-\frac{\sqrt{\lambda_k}}2Y}\leq
    ch_M^{-\frac12}\lambda_k^{s} e^{-\frac{\sqrt{\lambda_1}}2Y},
  \]
  where we used once again that the sequence $(\lambda_k)_{k\in\N}$ is non-decreasing.
  Due to the previous results, we arrive at
  \begin{equation}\label{eq:I_M2}
    \norm{y^{\frac\alpha2}(\psi_{s,k}-\tilde i_1\psi_{s,k})'}_{L^2(I_M)}^2\leq
    c\lambda_k^{2s}\left(h_\Omega^2Y^{2\mu}\bigl\{Y^{2(s-\mu)}-y_{M-1}^{2(s-\mu)}\bigr\}+h_M^{-1}
    e^{-\sqrt{\lambda_1}Y}\right).
  \end{equation}
  By combining \eqref{eq:inty}, \eqref{eq:I_M1} and \eqref{eq:I_M2}, we obtain
  \begin{align*}
    \norm{y^{\frac\alpha2}\nabla(u-i_yu)}_{L^2(\Omega\times I_M)}^2
    &\leq c\left(h_\Omega^2Y^{2\mu}\bigl\{Y^{2(s-\mu)}-y_{M-1}^{2(s-\mu)}\bigr\}
  +\bigl\{1+h_M^{-1}\bigr\} e^{-\sqrt{\lambda_1}Y}\right)\sum_{k=1}^\infty\lambda_k^{2s}\U_k^2\\
  &= c\left(h_\Omega^2Y^{2\mu}\bigl\{Y^{2(s-\mu)}-y_{M-1}^{2(s-\mu)}\bigr\}
+\bigl\{1+h_M^{-1}\bigr\} e^{-\sqrt{\lambda_1}Y}\right)\norm{\F}^2_{L^2(\Om)},
  \end{align*}
  where we used $\sum_{k=1}^\infty\lambda_k^{2s}\U_k^2=\sum_{k=1}^\infty \F_k^2$ (see Proposition
  \ref{prop:SolUF}) and the definition of $\norm{\F}^2_{L^2(\Om)}$. According to Lemma
  \ref{lemma:gradedmesh}, there holds $h_M^{-1}\leq c Y^{\mu-1} Y^{-\mu}M = c Y^{-1}M$. Since
  \[
    Y\geq \frac{3\abs{\ln h_\Omega}}{\sqrt{\lambda_1}}\quad\text{and}\quad M\le
    2h_\Om^{-1}
  \]
  by assumption, we obtain
  \[
    \bigl\{1+h_M^{-1}\bigr\} e^{-\sqrt{\lambda_1}Y}\leq h_\Omega^3+ch_\Omega^2\abs{\ln
    h_\Omega}^{-1}\leq c h_\Omega^2,
  \]
  which ends the proof.
\end{proof}

\begin{corollary}\label{cor:estY}
  For $\F\in L^2(\Om)$, let $u\in \mathring{H}^1_L(C,y^\alpha)$ be the solution
  of~\eqref{eq:weakExtension}. Moreover, let $2h_\Om^{-1}\ge M\ge h_\Om^{-1}$, $\mu< s$, and
  \[
    2\max\biggl(\frac{3\abs{\ln h_\Omega}}{\sqrt{\lambda_1}},1\biggr)\ge Y\geq \max\biggl(\frac{3\abs{\ln h_\Omega}}{\sqrt{\lambda_1}},1\biggr).
  \]
  Then, it holds
  \[
    \norm{y^{\frac\alpha2}\nabla(u-i_yu)}_{L^2(C_Y)}\leq c h_\Omega\abs{\ln h_
    \Om}^s \norm{\F}_{L^2(\Om)}.
  \]
\end{corollary}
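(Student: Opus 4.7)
The plan is to assemble the three preceding lemmas, which treat $I_1$, the interior intervals $I_m$ with $2\le m\le M-1$, and the last interval $I_M$ separately, into a single bound on the whole cylinder $C_Y$. Since $\nabla(u-i_yu)$ has disjoint support on the $\Om\times I_m$'s, I would write
\[
 \norm{y^{\frac\alpha2}\nabla(u-i_yu)}_{L^2(C_Y)}^2
 =\sum_{m=1}^{M}\norm{y^{\frac\alpha2}\nabla(u-i_yu)}_{L^2(\Om\times I_m)}^2
\]
and apply Lemma~\ref{lemma:linI1}, Lemma~\ref{lemma:1toN}, and Lemma~\ref{lemma:I_M} respectively, all of which are available under the hypotheses of the corollary.

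The core observation is that the estimate for the interior intervals produces a telescoping sum. Writing out the contributions of $m=2,\dots,M-1$ and the first term of the $I_M$ estimate, I get a bound proportional to
\[
 h_\Om^2 Y^{2\mu}\sum_{m=2}^{M}\bigl\{y_m^{2(s-\mu)}-y_{m-1}^{2(s-\mu)}\bigr\}
 = h_\Om^2 Y^{2\mu}\bigl(Y^{2(s-\mu)}-y_1^{2(s-\mu)}\bigr)
 \le c h_\Om^2 Y^{2s},
\]
where $y_M=Y$ was used to identify the top of the telescoped sum. The leftover constant term from Lemma~\ref{lemma:I_M} contributes $c h_\Om^2\le c h_\Om^2 Y^{2s}$ since $Y\ge 1$.

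Next I would handle the $I_1$ contribution $h_\Om^{2s/\mu} Y^{2s}$. Here the hypothesis $\mu<s$ enters decisively: it guarantees $2s/\mu>2$, and combined with $h_\Om\le\tfrac12$ yields $h_\Om^{2s/\mu}\le h_\Om^2$, so this term is also absorbed into $c h_\Om^2 Y^{2s}$. Collecting everything,
\[
 \norm{y^{\frac\alpha2}\nabla(u-i_yu)}_{L^2(C_Y)}^2
 \le c h_\Om^2 Y^{2s}\norm{\F}_{L^2(\Om)}^2 .
\]
Finally, the upper bound $Y\le 2\max\!\bigl(3\abs{\ln h_\Om}/\sqrt{\lambda_1},1\bigr)\le c\abs{\ln h_\Om}$ (valid since $h_\Om\le\tfrac12$, so $\abs{\ln h_\Om}\ge\ln 2>0$) gives $Y^{2s}\le c\abs{\ln h_\Om}^{2s}$, and taking square roots completes the proof.

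The only subtle point is the role of the assumption $\mu<s$: while Lemma~\ref{lemma:1toN} only requires $\mu\ne s$, the present corollary really needs the strict inequality $\mu<s$ so that the $I_1$ estimate does not dominate, and so that $y_m^{2(s-\mu)}$ is monotone increasing, making the telescoping sum give the cheap bound $Y^{2(s-\mu)}$ at the top. No other step is delicate; everything else is bookkeeping.
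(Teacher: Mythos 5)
Your proof is correct and follows essentially the same route as the paper: decompose over the intervals, apply the three preceding lemmas, telescope the interior contributions up to $Y^{2s}$, absorb the $I_1$ term via $h_\Om^{2s/\mu}\le h_\Om^2$ (using $\mu<s$ and $h_\Om\le\frac12$), and convert $Y^{2s}$ to $\abs{\ln h_\Om}^{2s}$ with the upper bound on $Y$. Your remark on where $\mu<s$ is genuinely needed matches the paper's own emphasis.
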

\begin{proof}
  By the Lemmas~\ref{lemma:linI1},~\ref{lemma:1toN}, and~\ref{lemma:I_M}, we obtain
  \begin{align*}
    \norm{y^{\frac\alpha2}\nabla(u-i_yu)}_{L^2(C_Y)}^2
    &=\sum_{m=1}^M\norm{y^{\frac\alpha2}\nabla(u-i_yu)}_{L^2(\Om\times I_m)}^2\\
    &\le \left(ch_\Om^{\frac{2s}\mu}Y^{2s}+ch_\Om^2Y^{2\mu}\sum_{m=2}^M\bigl\{y_m^{2(s-\mu)}-y_{m-1}^{2(s-\mu)}\bigr\}
  +ch_\Om^2\right)\norm{\F}_{L^2(\Om)}^2\\
  &\le ch_\Om^2\left(Y^{2s}-Y^{2\mu} y_1^{2(s-\mu)}+1\right)\norm{\F}_{L^2(\Om)}^2\le
  ch_\Om^2\abs{\ln h_\Om}^{2s}\norm{\F}_{L^2(\Om)}^2,
\end{align*}
where we have used $\mu<s$ and the upper bound on $Y$.
\end{proof}

Now, we are able to state the main result for this subsection analyzing the $h$-FEM on graded
meshes.

\begin{theorem}\label{th:errH}
  For $\F\in \HH^{1-s}(\Om)$, let $\U\in\HH^{s}(\Om)$ and $u\in \mathring{H}^1_L(C,y^\alpha)$ be the
  solutions of~\eqref{eq:Prob} and~\eqref{eq:weakExtension}, respectively, and let $u_h\in
  V_{h,M}$ be the solution of~\eqref{eq:weakdiscreteExtension}. Moreover, let
  $2h_\Om^{-1}\ge M\ge h_\Om^{-1}$, $\mu< s$, and
  \[
    2\max\biggl(\frac{3\abs{\ln h_\Omega}}{\sqrt{\lambda_1}},1\biggr)\ge Y\geq \max\biggl(\frac{3\abs{\ln h_\Omega}}{\sqrt{\lambda_1}},1\biggr).
  \]
  Then, it holds
  \[
    \norm{\U-\trO u_h}_{\HH^s(\Om)}\leq c\norm{y^{\frac\alpha2}\nabla(u-u_h)}_{L^2(C)}\leq c h_\Omega\abs{\ln h_
    \Om}^s \norm{\F}_{\HH^{1-s}(\Om)}.
  \]
\end{theorem}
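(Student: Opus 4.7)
The proof will be a short synthesis of the building blocks already established. The plan splits into two inequalities.

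For the first inequality, I would observe that Proposition \ref{prop:equi} gives $\U=\trO u$, so $\U-\trO u_h=\trO(u-u_h)$. Since $u_h\in V_{h,M}$ vanishes on $\D_L C_Y\cup(\Om\times\set{Y})$, its zero extension lies in $\mathring{H}^1_L(C,y^\alpha)$, and hence so does $u-u_h$. Proposition \ref{prop:trace} then yields
\[
  \norm{\U-\trO u_h}_{\HH^s(\Om)}=\norm{\trO(u-u_h)}_{\HH^s(\Om)}\le c\norm{u-u_h}_{\mathring{H}^1_L(C,y^\alpha)}\le c\norm{y^{\frac\alpha2}\nabla(u-u_h)}_{L^2(C)},
\]
where in the last step I use that on $\mathring{H}^1_L(C,y^\alpha)$ the weighted gradient controls the full weighted $H^1$-norm (standard weighted Poincaré argument).

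For the second inequality, I would apply Lemma \ref{lem:bestapprox} to bound the energy error by the best approximation on $C_Y$ plus the truncation error. The truncation error is controlled by Proposition \ref{prop:truncation}, and the choice $Y\ge 3\abs{\ln h_\Om}/\sqrt{\lambda_1}$ gives
\[
  \norm{y^{\frac\alpha2}\nabla u}_{L^2(C\setminus C_Y)}\le ce^{-\frac{\sqrt{\lambda_1}}{2}Y}\norm{\F}_{\HH^{-s}(\Om)}\le ch_\Om^{3/2}\norm{\F}_{\HH^{-s}(\Om)},
\]
which is of higher order than $h_\Om\abs{\ln h_\Om}^s$. For the best approximation, I apply Lemma \ref{lem:generalint}, which decomposes it into the $L^2$-projection error in $x$ and the $y$-interpolation error. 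Lemma \ref{lem:Clement} handles the former by $c h_\Om\norm{\F}_{\HH^{1-s}(\Om)}$; Corollary \ref{cor:estY} handles the latter by $ch_\Om\abs{\ln h_\Om}^s\norm{\F}_{L^2(\Om)}$, where the hypotheses on $M$, $\mu<s$, and $Y$ are precisely the ones inherited from the theorem.

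Combining these pieces and using the continuous embeddings $\HH^{1-s}(\Om)\hookrightarrow L^2(\Om)\hookrightarrow\HH^{-s}(\Om)$ to absorb the weaker norms into $\norm{\F}_{\HH^{1-s}(\Om)}$ yields the stated bound. There is no real obstacle: every nontrivial ingredient (regularity, trace, truncation, interpolation estimates) has been proved beforehand, and the only point requiring care is bookkeeping the hypotheses on $M$, $Y$, and $\mu$ so that Lemmas \ref{lemma:linI1}--\ref{lemma:I_M} and Corollary \ref{cor:estY} actually apply. The dominant term in the final estimate is the $y$-interpolation contribution $ch_\Om\abs{\ln h_\Om}^s$; the $L^2$-projection gives the better rate $ch_\Om$, and the truncation term is of order $h_\Om^{3/2}$, both negligible.
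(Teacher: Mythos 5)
Your proposal is correct and follows essentially the same route as the paper: the first inequality via Propositions \ref{prop:trace} and \ref{prop:equi}, and the second via the decomposition of Lemmas \ref{lem:bestapprox} and \ref{lem:generalint} with the three terms handled by Lemma \ref{lem:Clement}, Corollary \ref{cor:estY}, and Proposition \ref{prop:truncation}, the lower bound on $Y$ giving $e^{-\frac{\sqrt{\lambda_1}}2Y}\le ch_\Om^{3/2}$. Your explicit mention of the weighted Poincar\'e step and the embeddings $\HH^{1-s}(\Om)\hookrightarrow L^2(\Om)\hookrightarrow\HH^{-s}(\Om)$ only makes explicit what the paper leaves implicit.
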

\begin{proof}
  The first inequality of the assertion is due to Propositions~\ref{prop:trace} and~\ref{prop:equi}.
  Using the Lemmas~\ref{lem:bestapprox} and~\ref{lem:generalint}, we get
  \[
    \norm{y^{\frac\alpha2}\nabla(u-u_h)}_{L^2(C)}\le
    \norm{y^{\frac\alpha2}\nabla(u-\pi_xu)}_{L^2(C_Y)}+\norm{y^{\frac\alpha2}\nabla(u-i_yu)}_{L^2(C_Y)}+\norm{y^{\frac\alpha2}\nabla
    u}_{L^2(C\setminus C_Y)}.
  \]
  The three terms on the right-hand side are estimated in Lemma~\ref{lem:Clement},
  Corollary~\ref{cor:estY}, and Proposition~\ref{prop:truncation}. Hence, we get
  \[
    \norm{y^{\frac\alpha2}\nabla(u-u_h)}_{L^2(C)}\le ch_\Om \norm{\F}_{\HH^{1-s}(\Om)}+c h_\Omega\abs{\ln h_
    \Om}^s \norm{\F}_{L^2(\Om)} + ce^{-\frac{\sqrt{\lambda_1}}2Y}\norm{\F}_{\HH^{-s}(\Om)}.
  \]
  Then, the lower bound on $Y$ yields $e^{-\frac{\sqrt{\lambda_1}}2Y}\le ch_\Om^{\frac32}\le
  ch_\Om$, which implies the assertion.
\end{proof}

\begin{theorem}\label{th:NOY}
  The total number degrees of freedom $\NOY$ in $V_{h,M}$ to achieve the order of convergence given
  in Theorem~\ref{th:errH} behaves like 
  \[
    \NOY=O(\NO^{1+\frac1d}),
  \]
  where $d$ denotes the dimension of $\Om$.
\end{theorem}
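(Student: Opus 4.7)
The plan is to exploit the tensor product structure of the discrete space $V_{h,M}=V_h\otimes V_M$ and then plug in the bounds on the mesh parameters that are imposed in Theorem~\ref{th:errH}.

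First, I would observe that since $V_{h,M}$ is the tensor product of $V_h$ and $V_M$, its dimension satisfies
\[
  \NOY = \NO\cdot\NY.
\]
Next, I would recall the two elementary counting facts already established in Section~\ref{sec:Disc}: by quasi-uniformity of $\TT_\Om$, the number of degrees of freedom in $V_h$ satisfies $\NO = O(h_\Om^{-d})$, and in the $h$-FEM setting of Section~\ref{sec:lin_def} the polynomial degree vector is $p=(1,\ldots,1)$, so $\NY = O(M)$.

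Then I would invoke the hypothesis on $M$ used in Theorem~\ref{th:errH}, namely $M \le 2h_\Om^{-1}$, which immediately gives $\NY = O(h_\Om^{-1})$. Multiplying the two bounds yields
\[
  \NOY = O(h_\Om^{-d})\cdot O(h_\Om^{-1}) = O(h_\Om^{-(d+1)}) = O\bigl((h_\Om^{-d})^{1+\frac1d}\bigr) = O(\NO^{1+\frac1d}),
\]
which is the claim. There is no real obstacle here; the whole content of the statement is bookkeeping that makes visible the curse of the anisotropic refinement in the extended direction, and this bookkeeping is precisely what the much more favourable counting in the $hp$-case of Section~\ref{sec:p} will later be contrasted against.
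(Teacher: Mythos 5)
Your argument is correct and coincides with the paper's own proof: both use $\NOY=\NO\NY=\NO M$ together with $M=O(h_\Om^{-1})=O(\NO^{1/d})$, the latter relying (as the paper also implicitly does) on the quasi-uniformity equivalence $\NO=\Theta(h_\Om^{-d})$. Nothing further is needed.
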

\begin{proof}
  For the number of degrees of freedom $\NOY$ of the discretization considered in this section, it holds
  $\NOY = \NO \NY = \NO M$. Then, the assertion follows from $M=O( h_\Om^{-1})=O( \NO^{1/d})$.
\end{proof}

\subsection{Geometric meshes and \boldmath{$hp$}-FEM}\label{sec:p}

In this section, we derive discretization error estimates for the $hp$-method described in
Section~\ref{sec:p_def}, which results in a slightly improved rate of convergence of $h_\Omega$
compared to the previous subsection. However, we will have a drastically reduced computational
complexity in terms of the number of degrees of freedom. To this end, we do neither fix the number
of elements $M$ in direction $y$ nor the slope $\beta$ of the linear degree vector $p$ yet. These
will be set below. As before, we start with estimates for
$\norm{y^{\frac\alpha2}\nabla(u-i_y^pu)}_{L^2(\Omega\times I_m)}^2$ based on~\eqref{eq:inty}.

\begin{lemma}[Estimate on $I_1$]\label{lemma:plinI1}
  For $\F\in L^2(\Om)$, let $u\in \mathring{H}^1_L(C,y^\alpha)$ be the solution
  of~\eqref{eq:weakExtension} and let
  \[
    M \geq \frac{(1+\epsilon)\abs{\ln h_\Omega}}{s\abs{\ln \sigma}}
  \]
  for some $\epsilon\ge0$. Then, it holds
  \[
    \norm{y^{\frac\alpha2}\nabla(u-i^p_yu)}_{L^2(\Omega\times I_1)}^2\leq
    ch_\Omega^{2+2\epsilon}Y^{2s}\norm{\F}^2_{L^2(\Omega)}.
  \]
\end{lemma}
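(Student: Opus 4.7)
The plan is to essentially mimic the proof of Lemma~\ref{lemma:linI1} from the $h$-FEM section, with the key modification being that the length $h_1 = y_1$ of the first interval is now controlled via the geometric mesh and the assumption on $M$, rather than via the grading exponent $\mu$.

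First I would note that by the definition of $i_y^p$ in~\eqref{eq:def_i}, we have $(i_y^p\psi_{s,k})|_{I_1} = \psi_{s,k}(y_1)$, i.e., the interpolant is piecewise constant on $I_1$. Integration by parts on $I_1=[0,y_1]$, together with the vanishing of the boundary term $y^{\alpha+1}(\psi_{s,k}-\psi_{s,k}(y_1))^2|_0^{y_1}=0$ (which works because $\alpha+1 = 2(1-s) > 0$), yields
\[
  \norm{y^{\frac\alpha2}(\psi_{s,k}-\psi_{s,k}(y_1))}_{L^2(I_1)} \leq c\,\norm{y^{\frac\alpha2+1}\psi_{s,k}'}_{L^2(I_1)},
\]
exactly as in the earlier proof. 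Applying Corollary~\ref{cor:psi-2} with $n=1$, $r=1$ bounds the right-hand side by $c\lambda_k^{s-\frac12}\norm{y^{s-\frac12}}_{L^2(I_1)}$, whose square equals $c\lambda_k^{2s-1} y_1^{2s}/(2s)$. Multiplying by $\lambda_k$ then gives
\[
  \lambda_k\norm{y^{\frac\alpha2}(\psi_{s,k}-\psi_{s,k}(y_1))}_{L^2(I_1)}^2 \leq c\lambda_k^{2s} y_1^{2s}.
\]
An analogous computation with $n=1$, $r=0$ gives $\norm{y^{\frac\alpha2}(\psi_{s,k}-\psi_{s,k}(y_1))'}_{L^2(I_1)}^2 = \norm{y^{\frac\alpha2}\psi_{s,k}'}_{L^2(I_1)}^2 \leq c\lambda_k^{2s} y_1^{2s}$.

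The crucial step is to convert $y_1^{2s}$ into the desired factor $h_\Om^{2+2\epsilon}Y^{2s}$. By Lemma~\ref{lem:GeoMesh_geoMeshProps}, $y_1 = \sigma^{M-1}Y$. Since $\sigma\in(0,1)$ and $h_\Om\leq 1/2$, both $\ln\sigma$ and $\ln h_\Om$ are negative, so the hypothesis $M\geq (1+\epsilon)|\ln h_\Om|/(s|\ln\sigma|)$ rearranges to $\sigma^M \leq h_\Om^{(1+\epsilon)/s}$, hence
\[
  y_1^{2s} = \sigma^{-2s}\sigma^{2sM}Y^{2s} \leq c\,h_\Om^{2(1+\epsilon)}Y^{2s}.
\]

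Finally, inserting these estimates into the identity~\eqref{eq:inty} for $m=1$ and using Proposition~\ref{prop:SolUF} (so that $\sum_k \lambda_k^{2s}\U_k^2 = \sum_k \F_k^2 = \norm{\F}_{L^2(\Om)}^2$ by Parseval) yields
\[
  \norm{y^{\frac\alpha2}\nabla(u-i_y^pu)}_{L^2(\Om\times I_1)}^2 \leq c\,h_\Om^{2+2\epsilon}Y^{2s}\sum_{k=1}^\infty \lambda_k^{2s}\U_k^2 = c\,h_\Om^{2+2\epsilon}Y^{2s}\norm{\F}_{L^2(\Om)}^2,
\]
which is the claimed bound. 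There is no serious obstacle here: the argument is completely parallel to Lemma~\ref{lemma:linI1}, and the only new ingredient is the algebraic conversion of the $M$-bound into a power of $h_\Om$ via the geometric grading, which is essentially the defining feature of $hp$-FEM on geometric meshes near the singularity at $y=0$.
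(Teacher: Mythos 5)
Your proof is correct and follows essentially the same route as the paper: the paper's own proof simply refers back to the argument of Lemma~\ref{lemma:linI1} (constant interpolant on $I_1$, integration by parts, Corollary~\ref{cor:psi-2} with $r=1$ and $r=0$) and replaces the bound on $h_1$ by $h_1=\sigma^{M-1}Y\le ch_\Om^{(1+\epsilon)/s}Y$, exactly the algebraic conversion you carry out. No gaps.
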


\begin{proof}
  Notice that $i_y^p\psi_{s,k} = \psi_{s,k}(y_1)$ on the first interval $I_1$ as in the previous
  section. Thus, as in the proof of Lemma~\ref{lemma:linI1} but using
  \[
    h_1=\sigma^{M-1}Y\le ch_\Om^{\frac{1+\epsilon}s} Y
  \]
  from Lemma~\ref{lem:GeoMesh_geoMeshProps} and the assumption on $M$, we get
  \begin{align*}
    \lambda_k\norm{y^{\frac\alpha2}(\psi_{s,k}-\psi_{s,k}(y_1))}^2_{L^2(I_1)}&\leq
    c\lambda_{k}^{2s}h_1^{2s}\le c\lambda_{k}^{2s}h_\Omega^{2+2\epsilon}Y^{2s},\\
    \norm{y^{\frac\alpha2}(\psi_{s,k}-\psi_{s,k}(y_1))'}^2_{L^2(I_1)}&\leq
    c\lambda_k^{2s}h_1^{2s}\le c\lambda_k^{2s}h_\Omega^{2+2\epsilon}Y^{2s}.
  \end{align*}
  Hence, we obtain
  \[
    \norm{y^{\frac\alpha2}\nabla(u-i_y^pu)}_{L^2(\Omega\times I_1)}^2\leq
    ch_\Omega^{2+2\epsilon}Y^{2s}\sum_{k=1}^\infty\lambda_k^{2s}\U_k^2.
  \]
  As in the proof of Lemma~\ref{lemma:linI1}, this yields the assertion.
\end{proof}

In order to derive estimates on $I_m$ for $2\le m \le M-1$, we recall the following result which
is a direct consequence of \cite[Lemma 3.2.6]{Melenk2002}.

\begin{proposition}\label{prop:melenk}
  Let $w$ be analytic on $\hat I=(0,1)$ and satisfy for some $c_w, \delta> 0$ the estimate
  \[
    \norm{w^{(n)}}_{L^\infty(\hat I)} \le c_w \delta^n n!\quad\forall n \in \N.
  \]
  Then, there are constants $c, b > 0$ depending only on $\delta$ such that the Gauss-Lobatto
  interpolant $i_q w$ of degree $q\in\N$ on $\hat I$ satisfies
  \[
    \norm{w - i_qw}_{L^\infty(\hat I)} + \norm{(w - i_qw)'}_{L^\infty(\hat I)} \le c c_w e^{-bq}.
  \]
\end{proposition}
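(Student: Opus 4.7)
The plan is to reduce the proposition to the classical exponential convergence theory for polynomial interpolation of analytic functions, following essentially the argument of \cite[Lemma 3.2.6]{Melenk2002}.

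First, I would translate the derivative bound into an analyticity statement. The estimate $\norm{w^{(n)}}_{L^\infty(\hat I)} \le c_w \delta^n n!$ means the Taylor series of $w$ centered at any $x_0 \in \overline{\hat I}$ converges in the disk $\{z\in\mathbb{C}:\abs{z-x_0}<1/\delta\}$, so $w$ admits an analytic extension to a complex neighborhood $U_\delta=\{z\in\mathbb{C}:\operatorname{dist}(z,\overline{\hat I})<r\}$ for some $r>0$ depending only on $\delta$, with $\norm{w}_{L^\infty(U_\delta)}\le c'c_w$ for a constant $c'$ depending only on $\delta$. In particular, $w$ is analytic on a Bernstein ellipse $\mathcal{E}_\rho\supset\overline{\hat I}$ for some $\rho>1$ depending only on $\delta$.

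Second, I would invoke the standard exponential convergence bound for Gauss-Lobatto interpolation of analytic functions. Rescaling the classical estimate on $[-1,1]$ (provable via a Hermite contour integral representation of the interpolation error together with the Cauchy integral formula on $\partial\mathcal{E}_\rho$) to $\hat I$ yields
\[
\norm{w-i_q w}_{L^\infty(\hat I)}\le C\rho^{-q}\norm{w}_{L^\infty(\mathcal{E}_\rho)}\le C''c_w e^{-bq}
\]
with $b=\ln\rho>0$ and $C,C''$ depending only on $\delta$.

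Third, for the derivative $(w-i_q w)'$, the cleanest route is to differentiate the same Hermite contour integral representation under the integral sign: since the denominator of the Hermite kernel still contains the node polynomial $\omega_q(x)=\prod_{i=0}^q(x-x_i)$, differentiation in $x$ only produces rational factors bounded uniformly on $\hat I$ for $z\in\partial\mathcal{E}_\rho$, leaving the exponential decay factor $\rho^{-q}$ intact. Alternatively, one may apply the first two steps to $w'$ itself — which satisfies $\norm{(w')^{(n)}}_{L^\infty(\hat I)}\le c_w\delta\cdot(2\delta)^n n!$, i.e., the same hypothesis with a modified $\delta$ — and compare $(i_q w)'$ with the best polynomial approximation to $w'$ of degree $q-1$, absorbing any polynomial loss from a Markov-type step into a slightly smaller value of $b$.

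The main obstacle is avoiding a polynomial $q^2$ loss in the derivative estimate that a blunt Markov inequality would produce; this is resolved by the contour-integral argument, which produces both bounds with a common exponential rate, so that the stated inequality follows with constants $c,b>0$ depending only on $\delta$.
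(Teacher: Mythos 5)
The paper does not prove this proposition at all: it simply states that the result is a direct consequence of \cite[Lemma 3.2.6]{Melenk2002}. Your argument is a correct reconstruction of the standard proof behind that cited lemma: the derivative bounds $\norm{w^{(n)}}_{L^\infty(\hat I)}\le c_w\delta^n n!$ yield a bounded analytic extension to a $\delta$-dependent complex neighborhood (hence a Bernstein ellipse $\mathcal{E}_\rho$ with $\rho>1$ depending only on $\delta$), and the Hermite contour representation of the Gauss--Lobatto interpolation error then gives the rate $\rho^{-q}$, with the derivative handled by either differentiating the kernel or a Markov step. One small imprecision: after differentiating the Hermite kernel, the factor $\omega_q'(x)/\omega_q(z)$ is not uniformly bounded but only $O(q^2)\rho^{-q}$; this is harmless, and your fallback of absorbing the polynomial loss into a slightly smaller $b$ is exactly the right fix, so the proof is sound. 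In short, where the paper outsources the result to the literature, you supply the substance, and nothing essential is missing.
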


\begin{lemma}[Estimates on $I_m$ for $2\le m \le M-1$]\label{lemma:p1toN}
  For $\F\in L^2(\Om)$, let  $u\in \mathring{H}^1_L(C,y^\alpha)$ be the solution
  of~\eqref{eq:weakExtension}. Moreover, let $p\in\N^M$ be a linear degree vector as in
  \eqref{eq:lin_deg_vec} with some $\beta>0$ and let
  \[
    M \geq\frac{ (1+\epsilon)\abs{\ln  h_\Om}}{\min(s,\beta b) \abs{\ln \sigma}}
  \]
  for some $\epsilon\ge0$, where $b>0$ is a constant depending on $\sigma$ only. Then, it holds
  \[
    \norm{y^{\frac\alpha2}\nabla(u-i^p_yu)}_{L^2(\Omega\times I_m)}^2
    \le ch_\Omega^{2+2\epsilon}Y^{2s}\norm{\F}^2_{L^2(\Omega)}.
  \]
\end{lemma}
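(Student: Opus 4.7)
The plan is to reduce the claim via identity~\eqref{eq:inty} to estimating the Gauss-Lobatto interpolation error of $\psi_{s,k}$ in the weighted $L^2(I_m)$ norm on a single geometric subinterval $I_m$ with $2\le m\le M-1$, and then to apply the exponential convergence of $hp$-interpolation of analytic functions given by Proposition~\ref{prop:melenk}.

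First I would rescale $I_m=[y_{m-1},y_m]$ to the reference interval $\hat I=(0,1)$ via $\hat y=(y-y_{m-1})/h_m$ and set $\hat\psi_{s,k}(\hat y)=\psi_{s,k}(y)$, so that $\hat\psi_{s,k}^{(n)}(\hat y)=h_m^n\psi_{s,k}^{(n)}(y)$. To verify the analyticity bound required by Proposition~\ref{prop:melenk}, I combine Corollary~\ref{cor:psi-2}, which gives $\abs{\psi_{s,k}^{(n)}(y)}\le c\,8^n n!\,\lambda_k^{s-\frac r2}y^{2s-r-n}$ for any $r\in[0,1]$, with the key observation that on a geometric subinterval we have $\sigma y_m\le y\le y_m$ by Lemma~\ref{lem:GeoMesh_geoMeshProps}, so $y$ and $y_m$ are comparable uniformly in $m$ up to constants depending only on $\sigma$, and $h_m/y\le(1-\sigma)/\sigma$. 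This yields
\[
\norm{\hat\psi_{s,k}^{(n)}}_{L^\infty(\hat I)}\le c\bigl(8(1-\sigma)/\sigma\bigr)^n n!\,\lambda_k^{s-\frac r2}y_m^{2s-r},
\]
so Proposition~\ref{prop:melenk} applies with $c_w=c\lambda_k^{s-\frac r2}y_m^{2s-r}$ and constants $\delta,b>0$ depending only on $\sigma$.

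Next, I would pull the $L^\infty(\hat I)$ estimate back to $I_m$ (gaining a factor $h_m^{-1}$ for the derivative) and integrate to weighted $L^2(I_m)$ using $y\sim y_m$ once more together with $h_m=(1-\sigma)y_m$. Selecting $r=1$ for the zeroth-order term (which carries the extra factor $\lambda_k$) and $r=0$ for the first-order term, both contributions collapse to
\[
\lambda_k\norm{y^{\frac\alpha2}(\psi_{s,k}-i_{p_m}\psi_{s,k})}_{L^2(I_m)}^2+\norm{y^{\frac\alpha2}(\psi_{s,k}-i_{p_m}\psi_{s,k})'}_{L^2(I_m)}^2\le c\,\lambda_k^{2s}\,y_m^{2s}\,e^{-2bp_m}.
\]
Inserting this into~\eqref{eq:inty} and invoking Proposition~\ref{prop:SolUF} reduces the claim to the pointwise inequality $y_m^{2s}e^{-2bp_m}\le ch_\Om^{2+2\epsilon}Y^{2s}$ uniformly in $2\le m\le M-1$.

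The main obstacle is precisely this last inequality, since the two factors decay in opposite directions in~$m$: with $y_m=\sigma^{M-m}Y$ the factor $y_m^{2s}$ is small for small $m$, while Lemma~\ref{lem:GeoMesh_linDegDepOnK} yields $p_m\ge 1+\beta(m-1)\abs{\ln\sigma}+\beta\ln(1-\sigma)$, so $e^{-2bp_m}\le c\sigma^{2b\beta(m-1)}$ is small for large~$m$. Substituting gives $y_m^{2s}e^{-2bp_m}\le cY^{2s}\sigma^{2sM+2m(b\beta-s)-2b\beta}$. The worst case is attained at $m=2$ when $b\beta\ge s$, producing a factor governed by $\sigma^{2sM}$, and at $m=M-1$ when $b\beta<s$, producing a factor governed by $\sigma^{2b\beta M}$. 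Bounding both by $h_\Om^{2+2\epsilon}$ is exactly the content of the hypothesis $M\ge(1+\epsilon)\abs{\ln h_\Om}/(\min(s,\beta b)\abs{\ln\sigma})$, which completes the proof.
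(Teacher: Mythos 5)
Your proposal is correct and follows essentially the same route as the paper: reduction via \eqref{eq:inty}, rescaling to $\hat I$, the derivative bounds from Corollary~\ref{cor:psi-2} with $r=1$ and $r=0$, analyticity plus Proposition~\ref{prop:melenk}, and finally balancing $y_m^{2s}$ (equivalently $h_m^{2s}$) against $e^{-2bp_m}$ using the linear degree vector and the lower bound on $M$. The only cosmetic difference is in the last step, where the paper writes $e^{-2bp_m}\le c h_1^{2\beta b}h_m^{-2\beta b}$ and splits into the cases $s\le\beta b$ and $s>\beta b$, while you optimize the equivalent linear-in-$m$ exponent of $\sigma$ over the endpoints; the two computations coincide.
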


\begin{proof}
  For $m\ge 2$ it holds $y_{m-1}\le y_m\le \sigma^{-1} y_{m-1}$. By
  transforming to the reference element $\hat I=(0,1)$, we obtain for
  $(i_y^p\psi_{s,k})\bigr\rvert_{I_m} = i_{p_m}\psi_{s,k}\bigr\rvert_{I_m}$ that
  \[
    \norm{y^{\frac\alpha2}(\psi_{s,k}-i_{p_m}\psi_{s,k})}_{L^2(I_m)}
    \le c y_{m-1}^{\frac\alpha2}h_m^{\frac12}\norm{\hat \psi_{s,k}-\hat i_{p_m}\hat
    \psi_{s,k}}_{L^2(\hat I)}
    \le c y_{m-1}^{\frac\alpha2}h_m^{\frac12}\norm{\hat \psi_{s,k}-\hat i_{p_m}\hat
    \psi_{s,k}}_{L^\infty(\hat I)}.
  \]
  By means of Corollary~\ref{cor:psi-2}, applied with $r=1$, and
  Lemma~\ref{lem:GeoMesh_geoMeshProps}, we have
  \[
    \norm{\hat \psi^{(n)}_{s,k}}_{L^\infty(\hat I)}=h_m^n\norm{\psi^{(n)}_{s,k}}_{L^\infty(I_m)}\le
    c\lambda_k^{s-\frac12}h_m^n y_{m-1}^{2s-1-n}8^n  n!\le c
    \lambda_k^{s-\frac12}y_{m-1}^{2s-1}(8(\sigma^{-1}-1))^n n!.
  \]
  Moreover, due to well known series representations of $K_s$ from \cite[9.6.2 and
  9.6.10]{Abramowitz1964}, we directly conclude that $\hat\psi_{s,k}$ is analytic on $\hat I$.
  Hence, Proposition~\ref{prop:melenk} implies with $\delta=8(\sigma^{-1}-1)$ that
  \[
    \norm{\hat \psi_{s,k}-\hat i_{p_m}\hat \psi_{s,k}}_{L^\infty(\hat I)}\le c\lambda_k^{s-\frac12}
    y_{m-1}^{2s-1}e^{-bp_m}.
  \]
  Then, we get by Lemma~\ref{lem:GeoMesh_geoMeshProps}
  \[
    \norm{y^{\frac\alpha2}(\psi_{s,k}-i_{p_m}\psi_{s,k})}_{L^2(I_m)} \le c
    y_{m-1}^{s-\frac12}h_m^{\frac12} \lambda_k^{s-\frac12} e^{-bp_m}\le c
    h_m^s\lambda_k^{s-\frac12}e^{-bp_m}.
  \]

  Analogously, we obtain
  \begin{align*}
    \norm{y^{\frac\alpha2}(\psi_{s,k}-i_{p_m}\psi_{s,k})'}_{L^2(I_m)}
    &\le c y_{m-1}^{\frac\alpha2}h_m^{-\frac12}\norm{(\hat \psi_{s,k}-\hat i_{p_m}\hat
  \psi_{s,k})'}_{L^2(\hat I)}\\
  &\le c y_{m-1}^{\frac\alpha2}h_m^{-\frac12}\norm{(\hat \psi_{s,k}-\hat i_{p_m}\hat
\psi_{s,k})'}_{L^\infty(\hat I)}.
  \end{align*}
  By means of Corollary~\ref{cor:psi-2} with $r=0$ and  Lemma~\ref{lem:GeoMesh_geoMeshProps}, we
  have
  \[
    \norm{\hat \psi^{(n)}_{s,k}}_{L^\infty(\hat I)}=h_m^n\norm{\psi^{(n)}_{s,k}}_{L^\infty(I_m)}\le
    c\lambda_k^{s}h_m^n y_{m-1}^{2s-n}8^n  n!\le c
    \lambda_k^{s}y_{m-1}^{2s}(8(\sigma^{-1}-1))^n n!.
  \]
  Consequently, Proposition~\ref{prop:melenk} yields with $\delta=8(\sigma^{-1}-1)$ that
  \[
    \norm{(\hat \psi_{s,k}-\hat i_{p_m}\hat \psi_{s,k})'}_{L^\infty(\hat I)}\le c\lambda_k^{s}
    y_{m-1}^{2s}e^{-bp_m}.
  \]
  This implies
  \[
    \norm{y^{\frac\alpha2}(\psi_{s,k}-i_{p_m}\psi_{s,k})'}_{L^2(I_m)} \le c
    y_{m-1}^{s+\frac12}h_m^{-\frac12} \lambda_k^{s} e^{-bp_m}\le c \lambda_k^{s}h_m^se^{-bp_m}.
  \]

  Collecting the previous results yields
  \[
    \lambda_k\norm{y^{\frac\alpha2}(\psi_{s,k}-i_{p_m}\psi_{s,k})}_{L^2(I_m)}^2+\norm{y^{\frac\alpha2}(\psi_{s,k}-i_{p_m}\psi_{s,k})'}_{L^2(I_m)}^2\leq
    c \lambda_k^{2s}h_m^{2s}e^{-2bp_m}.
  \]
  Relation~\eqref{eq:lin_deg_vec} implies
  \[
    e^{-2b p_m}\leq c h_1^{2\beta b}h_m^{-2\beta b}.
  \]
  Thus, we deduce
  \begin{equation}\label{eq:p1toN}
    \lambda_k\norm{y^{\frac\alpha2}(\psi_{s,k}-i_{p_m}\psi_{s,k})}_{L^2(I_m)}^2+\norm{y^{\frac\alpha2}(\psi_{s,k}-i_{p_m}\psi_{s,k})'}_{L^2(I_m)}^2
    \leq c\lambda_k^{2s}h_m^{2(s-\beta b)} h_1^{2\beta b}.
  \end{equation}
  Let us now distinguish two cases:
  \begin{itemize}
    \item\textbf{\boldmath{$s\le\beta b$}:} Since $h_m\geq h_1$, we then obtain
      \[
        \lambda_k\norm{y^{\frac\alpha2}(\psi_{s,k}-i_{p_m}\psi_{s,k})}_{L^2(I_m)}^2+\norm{y^{\frac\alpha2}(\psi_{s,k}-i_{p_m}\psi_{s,k})'}_{L^2(I_m)}^2
        \leq c\lambda_k^{2s}h_1^{2(s-\beta b)}
        h_1^{2\beta b} =c\lambda_k^{2s}h_1^{2s}
      \]
      As before, the relation $M\geq\frac{ (1+\epsilon)\abs{\ln h_\Omega}}{s\abs{\ln \sigma}}$ together with
      Lemma~\ref{lem:GeoMesh_geoMeshProps} implies $h_1\le ch_\Om^{\frac{1+\epsilon}s}Y$. Hence, we get
      \[
        \lambda_k\norm{y^{\frac\alpha2}(\psi_{s,k}-i_{p_m}\psi_{s,k})}_{L^2(I_m)}^2+\norm{y^{\frac\alpha2}(\psi_{s,k}-i_{p_m}\psi_{s,k})'}_{L^2(I_m)}^2\leq
        c\lambda_k^{2s}h_\Omega^{2+2\epsilon} Y^{2s}.
      \]
    \item \textbf{\boldmath{$s>\beta b$}:} With $h_m\leq Y$, we get from \eqref{eq:p1toN} that
      \[
        \lambda_k\norm{y^{\frac\alpha2}(\psi_{s,k}-i_{p_m}\psi_{s,k})}_{L^2(I_m)}^2+\norm{y^{\frac\alpha2}(\psi_{s,k}-i_{p_m}\psi_{s,k})'}_{L^2(I_m)}^2\leq
        c\lambda_k^{2s}Y^{2(s-\beta b)} h_1^{2\beta b}.
      \]
      Similarly as before, the relation $M\geq \frac{(1+\epsilon)\abs{\ln h_\Omega}}{\beta b \abs{\ln \sigma}}$
      together with Lemma~\ref{lem:GeoMesh_geoMeshProps} implies $h_1\le ch_\Om^{\frac{1+\epsilon}{\beta b}}Y$. Thus, we get
      \[
        \lambda_k\norm{y^{\frac\alpha2}(\psi_{s,k}-i_{p_m}\psi_{s,k})}_{L^2(I_m)}^2+\norm{y^{\frac\alpha2}(\psi_{s,k}-i_{p_m}\psi_{s,k})'}_{L^2(I_m)}^2\leq
        c\lambda_k^{2s}h_\Omega^{2+2\epsilon} Y^{2s}.
      \]
  \end{itemize}

  The previous results in combination with \eqref{eq:inty} imply
  \[
    \norm{y^{\frac\alpha2}\nabla(u-i_y^pu)}^2_{L^2(\Omega\times I_m)}\leq ch_\Omega^{2+2\epsilon} Y^{2s} \sum_{k=1}^\infty
    \lambda_k^{2s}\U_k^2.
  \]
  Finally, applying Proposition~\ref{prop:SolUF} yields the assertion.
\end{proof}

\begin{lemma}[Estimate on $I_M$]\label{lemma:pI_M}
  For $\F\in L^2(\Om)$, let  $u\in \mathring{H}^1_L(C,y^\alpha)$ be the solution
  of~\eqref{eq:weakExtension}. Moreover, let $p\in\N^M$ be a linear degree vector as in
  \eqref{eq:lin_deg_vec} with some $\beta>0$ and let
  \[
    2\frac{(1+\epsilon)\abs{\ln  h_\Om}}{\min(s,\beta b) \abs{\ln\sigma}} \ge M \geq \frac{(1+\epsilon)\abs{\ln
    h_\Om}}{\min(s,\beta b) \abs{\ln\sigma}}\quad\text{and}\quad
    Y\geq \max\biggl(\frac{2\abs{\ln h_\Omega}}{\sqrt{\lambda_1}},1\biggr)
  \]
  for some $\epsilon\ge0$, where $b>0$ is a constant depending on $\sigma$ only. Then, it holds
  \[
    \norm{y^{\frac\alpha2}\nabla(u-i_y^pu)}_{L^2(\Omega\times I_M)}^2\leq c
    \bigl(h_\Omega^{2+2\epsilon}Y^{2s}+h_\Omega^2\bigr) \norm{\F}^2_{L^2(\Om)}.
  \]
\end{lemma}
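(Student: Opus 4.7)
The plan is to mirror the proof of Lemma~\ref{lemma:I_M}, replacing the linear Lagrange interpolant $\tilde i_1$ with its high-order Gauss--Lobatto analogue $\tilde i_{p_M}$. The key observation is that on $I_M$ the interpolant $i_y^p\psi_{s,k} = \tilde i_{p_M}\psi_{s,k}$ differs from the standard Gauss--Lobatto interpolant $i_{p_M}\psi_{s,k}$ only by the rightmost-node contribution, namely $i_{p_M}\psi_{s,k} - \tilde i_{p_M}\psi_{s,k} = \psi_{s,k}(Y)\,l_{p_M,p_M}$, where $l_{p_M,p_M}$ denotes the Gauss--Lobatto Lagrange basis polynomial of degree $p_M$ associated with the node $y_M = Y$. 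Inserting $i_{p_M}\psi_{s,k}$ as an intermediate function, the triangle inequality yields
\[
  \norm{y^{\frac\alpha2}(\psi_{s,k} - \tilde i_{p_M}\psi_{s,k})}_{L^2(I_M)} \le \norm{y^{\frac\alpha2}(\psi_{s,k} - i_{p_M}\psi_{s,k})}_{L^2(I_M)} + \psi_{s,k}(Y)\norm{y^{\frac\alpha2}\,l_{p_M,p_M}}_{L^2(I_M)},
\]
together with an analogous bound for the weighted $L^2$ norm of the derivative.

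The first summand (and its derivative analogue) is bounded exactly as in the proof of Lemma~\ref{lemma:p1toN}, whose argument only uses $m\ge 2$, which is satisfied by $m=M$. After weighting by $\lambda_k$ in \eqref{eq:inty}, this produces the contribution $c\lambda_k^{2s}h_\Om^{2+2\epsilon}Y^{2s}$, which when summed over $k$ via Proposition~\ref{prop:SolUF} yields the main term $ch_\Om^{2+2\epsilon}Y^{2s}\norm{\F}_{L^2(\Om)}^2$. For the boundary correction, standard estimates for Gauss--Lobatto Lagrange polynomials give $\norm{l_{p_M,p_M}}_{L^\infty(I_M)}\le c$ and $\norm{l'_{p_M,p_M}}_{L^\infty(I_M)}\le c h_M^{-1}$ (up to polynomial factors in $p_M$), whence $\norm{y^{\frac\alpha2} l_{p_M,p_M}}_{L^2(I_M)} \le cY^{\frac\alpha2}h_M^{1/2}$ and $\norm{y^{\frac\alpha2} l'_{p_M,p_M}}_{L^2(I_M)} \le cY^{\frac\alpha2}h_M^{-1/2}$. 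For the scalar factor $\psi_{s,k}(Y)$, I apply Corollary~\ref{cor:psi-3} with $r_1=\frac{\alpha+1}2=1-s$ for the $L^2$ term and $r_1=\frac\alpha2=\frac12-s$ for the derivative term, followed by the same monotonicity step $\lambda_k\ge\lambda_1$ used in Lemma~\ref{lemma:I_M}, obtaining $Y^{\frac{\alpha+1}2}\psi_{s,k}(Y)\le c\lambda_k^{s-\frac12}e^{-\frac{\sqrt{\lambda_1}}2Y}$ and $Y^{\frac\alpha2}\psi_{s,k}(Y)\le c\lambda_k^{s-\frac12}e^{-\frac{\sqrt{\lambda_1}}2Y}$ (after absorbing powers of $\lambda_k$ via $\lambda_k\ge\lambda_1$). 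Squaring, multiplying by $\lambda_k$ where appropriate, and summing over $k$ using Proposition~\ref{prop:SolUF} bounds the boundary contribution by $c(1 + h_M^{-1})e^{-\sqrt{\lambda_1}Y}\norm{\F}_{L^2(\Om)}^2$.

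Finally, the assumed lower bound $Y\ge 2\abs{\ln h_\Om}/\sqrt{\lambda_1}$ gives $e^{-\sqrt{\lambda_1}Y}\le h_\Om^2$, and $h_M = (1-\sigma)Y\ge c$ from Lemma~\ref{lem:GeoMesh_geoMeshProps} implies $h_M^{-1}\le c$, so the boundary contribution is bounded by $ch_\Om^2\norm{\F}_{L^2(\Om)}^2$. Combining both parts yields the claimed estimate. The main obstacle is controlling the $p_M$-dependence of the Gauss--Lobatto Lagrange-polynomial norms; since $p_M = O(\abs{\ln h_\Om})$ by Lemma~\ref{lem:GeoMesh_linDegDepOnK}, any polynomial-in-$p_M$ factor is polylogarithmic in $h_\Om$ and is absorbed by the strict exponential decay $e^{-\sqrt{\lambda_1}Y}$, if necessary by using the lower bound on $Y$ with a small margin.
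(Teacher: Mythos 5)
Your decomposition is exactly the paper's: insert the full Gauss--Lobatto interpolant $i_{p_M}\psi_{s,k}$ as an intermediate function, bound that part by the argument of Lemma~\ref{lemma:p1toN}, and treat the correction $\psi_{s,k}(Y)\,l_{p_M,p_M}$ via the uniform bound on Gauss--Lobatto Lagrange polynomials, a Markov-type inverse inequality, and Corollary~\ref{cor:psi-3}. Up to the last step this matches the paper's proof, and your observation that $h_M=(1-\sigma)Y\ge 1-\sigma$ makes $h_M^{-1}$ harmless is correct and even slightly cleaner than the graded-mesh case.

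The genuine gap is your disposal of the polynomial-in-$p_M$ factor in the derivative correction. The inverse inequality costs a factor $p_M^2$ (so $p_M^4$ after squaring), i.e.\ a factor of order $\abs{\ln h_\Om}^4$, and you claim this ``is absorbed by the strict exponential decay $e^{-\sqrt{\lambda_1}Y}$, if necessary by using the lower bound on $Y$ with a small margin.'' There is no margin: the hypothesis only guarantees $Y\ge 2\abs{\ln h_\Om}/\sqrt{\lambda_1}$, and at that value $e^{-\sqrt{\lambda_1}Y}$ equals $h_\Om^2$ exactly, so your argument delivers $c\,h_\Om^2\abs{\ln h_\Om}^4$ for the second term, which is not $O(h_\Om^2)$ (and, for $\epsilon=0$ and $Y$ at its lower bound, is not dominated by $h_\Om^{2+2\epsilon}Y^{2s}$ either, since $2s<4$). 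The paper's fix is to pay for $p_M^2$ with negative powers of $Y$ extracted from the decay of $\psi_{s,k}$ itself: it writes $\psi_{s,k}(Y)\norm{y^{\frac\alpha2}l_{p_M,p_M}'}_{L^2(I_M)}\le 2p_M^2Y^{-2}\cdot Y^{\frac{\alpha+1}2+2}\psi_{s,k}(Y)$ and applies Corollary~\ref{cor:psi-3} with the larger exponent $r_1=\frac{\alpha+1}2+2=3-s$, so that the surviving prefactor is $p_M^4Y^{-4}\le cM^4\abs{\ln h_\Om}^{-4}\le c$ by the assumed \emph{upper} bound on $M$ together with the lower bound on $Y$. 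Your choice $r_1=\frac\alpha2$ leaves no such $Y^{-4}$ to pair with $p_M^4$, so the stated estimate does not follow from your argument as written.
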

\begin{proof}
  We proceed as in the proof of Lemma \ref{lemma:I_M} and recall that $Y=y_M=\sigma^{-1}y_{M-1}$ and
  $h_M=(1-\sigma)Y=(\sigma^{-1}-1)y_{M-1}$. Moreover, according to \cite{Fejer1932}, the Lagrange
  basis functions $l_{i,q}$ of order $q\in\N$ on $I_M$ have the property
  \begin{equation}\label{eq:boundl}
    \norm{l_{i,q}}_{L^\infty(I_M)}\leq 1\quad\text{for }i=0,1,\dots,q.
  \end{equation}
  As a consequence, we obtain by means of an inverse inequality (see, e.g., \cite[Lemma
  3.2.2]{Melenk2002})
  \begin{equation}\label{eq:inversep}
    \norm{l_{i,q}'}_{L^\infty(I_M)}\leq 2q^2\norm{l_{i,q}}_{L^\infty(I_M)}\leq  2q^2.
  \end{equation}
  Noting that $i_y^p\psi_{s,k} = \tilde i_{p_M}\psi_{s,k}$ on $I_M$, we introduce the Gauss-Lobatto
  interpolant $i_{p_M}\psi_{s,k}$ on $I_M$ as an intermediate function such that
  \begin{align*}
    \norm{y^{\frac\alpha2}(\psi_{s,k}-\tilde i_{p_M}\psi_{s,k})}_{L^2(I_M)}
    &\leq
    \norm{y^{\frac\alpha2}(\psi_{s,k}-i_{p_M}\psi_{s,k})}_{L^2(I_M)}+\norm{y^{\frac\alpha2}(i_{p_M}\psi_{s,k}-\tilde
    i_{p_M}\psi_{s,k})}_{L^2(I_M)}\\
    &= \norm{y^{\frac\alpha2}(\psi_{s,k}-i_{p_M}\psi_{s,k})}_{L^2(I_M)}+\psi_{s,k}(Y)\norm{y^{\frac\alpha2}l_{p_M,p_M}}_{L^2(I_M)}\\
    &\leq \norm{y^{\frac\alpha2}(\psi_{s,k}-i_{p_M}\psi_{s,k})}_{L^2(I_M)}+Y^{\frac{\alpha+1}2}\psi_{s,k}(Y),
  \end{align*}
  where we used \eqref{eq:boundl} in the last step. As in the proof of Lemma \ref{lemma:p1toN}, we
  deduce for $M \geq \frac{(1+\epsilon)\abs{\ln  h_\Om}}{\min(s,\beta b) \abs{\ln \sigma}}$ that
  \[
    \lambda_k\norm{y^{\frac\alpha2}(\psi_{s,k}-i_{p_M}\psi_{s,k})}^2_{L^2(I_M)}\leq
    c\lambda_k^{2s}h_\Omega^{2+2\epsilon} Y^{2s}.
  \]
  Since $Y\geq 1$ by assumption, we obtain using Corollary \ref{cor:psi-3} with
  $r_1=\frac{\alpha+1}2=1-s$ together with the monotonicity of $e^{-\sqrt{\lambda_k}y}$
  \[
    Y^{\frac{\alpha+1}2}\psi_{s,k}(Y)\leq c\lambda_k^{\frac{s-1}2}
    e^{-\frac{\sqrt{\lambda_k}}2Y}\leq c\lambda_k^{s-\frac12}
    e^{-\frac{\sqrt{\lambda_1}}2Y},
  \]
  where we notice that $(\lambda_k)_{k\in\N}$ is a non-decreasing sequence. Combining the previous results yields
  \begin{equation}\label{eq:I_M1p}
    \lambda_k\norm{y^{\frac\alpha2}(\psi_{s,k}-\tilde i_{p_M}\psi_{s,k})}_{L^2(I_M)}^2\leq
    c\lambda_k^{2s}\left(h_\Omega^{2+2\epsilon} Y^{2s}+e^{-\sqrt{\lambda_1}Y}\right).
  \end{equation}
  Similarly, we deduce by means of \eqref{eq:inversep}
  \begin{align*}
    \norm{y^{\frac\alpha2}(\psi_{s,k}-\tilde i_{p_M}\psi_{s,k})'}_{L^2(I_M)}
    &\leq
    \norm{y^{\frac\alpha2}(\psi_{s,k}-i_{p_M}\psi_{s,k})'}_{L^2(I_M)}+\norm{y^{\frac\alpha2}(i_{p_M}\psi_{s,k}-\tilde
    i_{p_M}\psi_{s,k})'}_{L^2(I_M)}\\
    &= \norm{y^{\frac\alpha2}(\psi_{s,k}-i_{p_M}\psi_{s,k})'}_{L^2(I_M)}+\psi_{s,k}(Y)\norm{y^{\frac\alpha2}l_{p_M,p_M}'}_{L^2(I_M)}\\
    &\leq \norm{y^{\frac\alpha2}(\psi_{s,k}-i_{p_M}\psi_{s,k})'}_{L^2(I_M)}+2p_M^2Y^{-2}Y^{\frac{\alpha+1}2+2}\psi_{s,k}(Y).
  \end{align*}
  Using $M \geq \frac{(1+\epsilon)\abs{\ln h_\Om}}{\min(s,\beta b)
  \abs{\ln \sigma}}$, the first term can again be estimated as in the proof Lemma \ref{lemma:p1toN}
  such that
  \[
    \norm{y^{\frac\alpha2}(\psi_{s,k}-i_{p_M}\psi_{s,k})'}^2_{L^2(I_m)}\leq
    c\lambda_k^{2s}h_\Omega^{2+2\epsilon} Y^{2s}.
  \]
  Employing Corollary \ref{cor:psi-3} with $r_1=\frac{\alpha+1}2+2=3-s$ together with the
  monotonicity of $e^{-\sqrt{\lambda_k}y}$ yields for $Y\geq 1$
  \[
    Y^{\frac{\alpha+1}2+2}\psi_{s,k}(Y)\leq c\lambda_k^{\frac{s-3}2}
    e^{-\frac{\sqrt{\lambda_k}}2Y}\leq c\lambda_k^{s}e^{-\frac{\sqrt{\lambda_1}}2Y},
  \]
  where we used once again that the sequence $(\lambda_k)_{k\in\N}$ is non-decreasing.  Due to the
  previous results, we arrive at
  \begin{equation}\label{eq:I_M2p}
    \norm{y^{\frac\alpha2}(\psi_{s,k}-\tilde i_{p_M}\psi_{s,k})'}_{L^2(I_M)}^2\leq
    c\lambda_k^{2s}\left(h_\Omega^{2+2\epsilon} Y^{2s}+p_M^4Y^{-4}
    e^{-\sqrt{\lambda_1}Y}\right).
  \end{equation}
  By combining \eqref{eq:inty}, \eqref{eq:I_M1p} and \eqref{eq:I_M2p}, we obtain as in the proof of
  Lemma~\ref{lemma:I_M}
  \begin{align*}
    \norm{y^{\frac\alpha2}\nabla(u-i_y^pu)}_{L^2(\Omega\times I_M)}^2
    &\leq c\left(h_\Omega^{2+2\epsilon} Y^{2s}+\bigl\{1+p_M^4Y^{-4}\bigr\} e^{-\sqrt{\lambda_1}Y}\right)\sum_{k=1}^\infty\lambda_k^{2s}\U_k^2\\
    &= c\left(h_\Omega^{2+2\epsilon} Y^{2s}+\bigl\{1+p_M^4Y^{-4}\bigr\} e^{-\sqrt{\lambda_1}Y}\right)\norm{\F}^2_{L^2(\Om)}.
  \end{align*}
  According to Lemma \ref{lem:GeoMesh_linDegDepOnK}, there holds $p_M\leq cM$. Since
  \[
    Y\geq \frac{2\abs{\ln h_\Omega}}{\sqrt{\lambda_1}}\quad\text{and}\quad M\le \frac{2\abs{\ln  h_\Om}}{\min(s,\beta b) \abs{\ln\sigma}}
  \]
  by assumption, we obtain
  \[
    \bigl\{1+p_M^4Y^{-4}\bigr\} e^{-\sqrt{\lambda_1}Y}\leq
    h_\Omega^2+ch_\Omega^2M^4\abs{\ln
    h_\Omega}^{-4}\leq c h_\Omega^2,
  \]
  which ends the proof.
\end{proof}

\begin{corollary}\label{cor:pestY}
  For $\F\in L^2(\Om)$, let $u\in \mathring{H}^1_L(C,y^\alpha)$ be the solution
  of~\eqref{eq:weakExtension}. Moreover, let $p\in\N^M$ be a linear degree vector as in
  \eqref{eq:lin_deg_vec} with some $\beta>0$ and let
  \[
    2\frac{(1+\epsilon)\abs{\ln  h_\Om}}{\min(s,\beta b) \abs{\ln\sigma}} \ge M \geq \frac{(1+\epsilon)\abs{\ln
    h_\Om}}{\min(s,\beta b) \abs{\ln\sigma}}\quad\text{and}\quad 2\max\biggl(\frac{2\abs{\ln
    h_\Omega}}{\sqrt{\lambda_1}},1\biggr) \ge Y\geq \max\biggl(\frac{2\abs{\ln
    h_\Omega}}{\sqrt{\lambda_1}},1\biggr)
  \]
  for some $\epsilon>0$, where $b>0$ is a constant depending on $\sigma$ only. Then, it holds
  \[
    \norm{y^{\frac\alpha2}\nabla(u-i_y^pu)}_{L^2(C_Y)}\leq c h_\Omega \norm{\F}_{L^2(\Om)}.
  \]
\end{corollary}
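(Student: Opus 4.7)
The plan is to mimic the structure of the proof of Corollary~\ref{cor:estY}: decompose $C_Y$ along the geometric partition in the $y$-direction as $C_Y=\bigcup_{m=1}^{M}\Om\times I_m$, apply the three element-wise bounds already established, and finally absorb the emerging logarithmic factors into the extra $h_\Om^{2\epsilon}$ coming from the strict positivity of $\epsilon$.

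More precisely, I would first split
\[
  \norm{y^{\frac\alpha2}\nabla(u-i_y^pu)}_{L^2(C_Y)}^2=\sum_{m=1}^{M}\norm{y^{\frac\alpha2}\nabla(u-i_y^pu)}_{L^2(\Om\times I_m)}^2,
\]
then bound the first summand by Lemma~\ref{lemma:plinI1}, the summands for $2\le m\le M-1$ by Lemma~\ref{lemma:p1toN}, and the last one by Lemma~\ref{lemma:pI_M}. All three produce the same leading contribution $c h_\Om^{2+2\epsilon}Y^{2s}\norm{\F}_{L^2(\Om)}^2$ (plus the harmless $c h_\Om^2\norm{\F}_{L^2(\Om)}^2$ from $I_M$ that accounts for the truncation-of-cylinder error), so that summing over $m$ yields
\[
  \norm{y^{\frac\alpha2}\nabla(u-i_y^pu)}_{L^2(C_Y)}^2\le c\bigl(M\, h_\Om^{2+2\epsilon}Y^{2s}+h_\Om^2\bigr)\norm{\F}_{L^2(\Om)}^2.
\]

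The main point left is to show that the prefactor $M\,h_\Om^{2+2\epsilon}Y^{2s}$ is dominated by $h_\Om^2$. The upper bounds assumed on $M$ and $Y$ give $M\le c\,\abs{\ln h_\Om}$ and $Y\le c\,\abs{\ln h_\Om}$ (with constants depending only on $s$, $\beta$, $\sigma$, $b$, $\lambda_1$), hence $M\,Y^{2s}\le c\,\abs{\ln h_\Om}^{1+2s}$. Therefore
\[
  M\,h_\Om^{2+2\epsilon}Y^{2s}\le c\,h_\Om^{2}\bigl(h_\Om^{\epsilon}\abs{\ln h_\Om}^{\frac{1+2s}{2}}\bigr)^{2}\le c\,h_\Om^{2},
\]
using that $h_\Om\le\tfrac12$ and that $h_\Om^{\epsilon}\abs{\ln h_\Om}^{(1+2s)/2}$ is uniformly bounded for $h_\Om\in(0,\tfrac12]$ whenever $\epsilon>0$. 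Taking square roots then yields the claimed bound $c\,h_\Om\norm{\F}_{L^2(\Om)}$.

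The only delicate step is precisely this absorption: it is crucial here that the hypothesis strengthens $\epsilon\ge 0$ (as used in the per-element Lemmas~\ref{lemma:plinI1}--\ref{lemma:pI_M}) to $\epsilon>0$, so that the polynomial factor $h_\Om^{2\epsilon}$ can kill the polylogarithmic factor $\abs{\ln h_\Om}^{1+2s}$ coming from $M\,Y^{2s}$. Everything else is routine summation and use of the earlier lemmas; no new regularity or interpolation theory is required.
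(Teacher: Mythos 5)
Your proposal is correct and follows essentially the same route as the paper: split over the geometric partition, apply Lemmas~\ref{lemma:plinI1}, \ref{lemma:p1toN}, and \ref{lemma:pI_M}, bound $M\,Y^{2s}$ by $c\abs{\ln h_\Om}^{1+2s}$ via the upper bounds on $M$ and $Y$, and absorb this into $h_\Om^{2\epsilon}$ using $\epsilon>0$. (Only a cosmetic quibble: the extra $h_\Om^2$ term in Lemma~\ref{lemma:pI_M} stems from the modified interpolant $\tilde i_{p_M}$ dropping the value at $Y$, not from the cylinder truncation, which is handled separately by Proposition~\ref{prop:truncation}.)
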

\begin{proof}
  By the Lemmas~\ref{lemma:plinI1},~\ref{lemma:p1toN}, and~\ref{lemma:pI_M}, we obtain
  \begin{align*}
    \norm{y^{\frac\alpha2}\nabla(u-i_y^pu)}_{L^2(C_Y)}^2
    &=\sum_{m=1}^M\norm{y^{\frac\alpha2}\nabla(u-i_y^pu)}_{L^2(\Om\times I_m)}^2\le c
    \bigl(h_\Om^{2+2\epsilon}Y^{2s}M +h_\Om^2\bigr)\norm{\F}_{L^2(\Om)}^2\\
    &\le c\bigl(h_\Om^{2+2\epsilon}\abs{\ln h_\Om}^{2s+1}+h_\Om^2\bigr)\norm{\F}_{L^2(\Om)}^2
    \le ch_\Om^2\norm{\F}_{L^2(\Om)}^2,
  \end{align*}
  where we have used the upper bounds on $M$ and $Y$ and the boundedness of $h_\Om^{2\epsilon}\abs{\ln
  h_\Om}^{2s+1}$ for $\epsilon>0$.
\end{proof}

Now, we are able to state the main result for this subsection analyzing the $hp$-FEM on geometric
meshes.

\begin{theorem}\label{th:perrH}
  For $\F\in \HH^{1-s}(\Om)$, let $\U\in\HH^{s}(\Om)$ and $u\in \mathring{H}^1_L(C,y^\alpha)$ be the
  solutions of~\eqref{eq:Prob} and~\eqref{eq:weakExtension}, respectively, and let $u_h\in
  V_{h,M}$ be the solution of~\eqref{eq:weakdiscreteExtension}. Moreover, let $p\in\N^M$
  be a linear degree vector as in \eqref{eq:lin_deg_vec} with some $\beta>0$ and let
  \[
    2\frac{(1+\epsilon)\abs{\ln  h_\Om}}{\min(s,\beta b) \abs{\ln\sigma}} \ge M \geq
    \frac{(1+\epsilon)\abs{\ln
    h_\Om}}{\min(s,\beta b) \abs{\ln\sigma}}\quad\text{and}\quad 2\max\biggl(\frac{2\abs{\ln
    h_\Omega}}{\sqrt{\lambda_1}},1\biggr) \ge Y\geq \max\biggl(\frac{2\abs{\ln
    h_\Omega}}{\sqrt{\lambda_1}},1\biggr)
  \]
  for some $\epsilon>0$, where $b>0$ is a constant depending on $\sigma$ only. Then, it holds
  \[
    \norm{\U-\trO u_h}_{\HH^s(\Om)}\leq c\norm{y^{\frac\alpha2}\nabla(u-u_h)}_{L^2(C)}\leq c h_\Omega\norm{\F}_{\HH^{1-s}(\Om)}.
  \]
\end{theorem}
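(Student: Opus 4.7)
The plan is to mirror the proof of Theorem~\ref{th:errH} almost verbatim, since the $hp$-machinery developed in Section~\ref{sec:p} has been arranged so that the final assembly is a bookkeeping exercise. First, the inequality $\norm{\U-\trO u_h}_{\HH^s(\Om)}\le c\norm{y^{\alpha/2}\nabla(u-u_h)}_{L^2(C)}$ is immediate from Proposition~\ref{prop:equi} (identifying $\U=\trO u$) combined with the trace estimate in Proposition~\ref{prop:trace} applied to the function $u-u_h\in\mathring{H}^1_L(C,y^\alpha)$ (recall that $u_h$ extends by zero into $C\setminus C_Y$).

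For the energy error on the full cylinder, I would chain Lemma~\ref{lem:bestapprox} and Lemma~\ref{lem:generalint} to obtain the three-term bound
\[
  \norm{y^{\frac\alpha2}\nabla(u-u_h)}_{L^2(C)}\le
  \norm{y^{\frac\alpha2}\nabla(u-\pi_xu)}_{L^2(C_Y)}
  + c\norm{y^{\frac\alpha2}\nabla(u-i_y^pu)}_{L^2(C_Y)}
  + \norm{y^{\frac\alpha2}\nabla u}_{L^2(C\setminus C_Y)}.
\]
The first summand is handled directly by Lemma~\ref{lem:Clement}, giving $c\,h_\Om\norm{\F}_{\HH^{1-s}(\Om)}$. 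The second summand is the new $hp$-contribution: under the precise hypotheses on the geometric mesh, the linear degree vector, and the bounds on $M$ and $Y$ stated in the theorem, Corollary~\ref{cor:pestY} yields $c\,h_\Om\norm{\F}_{L^2(\Om)}$, which we further bound by $c\,h_\Om\norm{\F}_{\HH^{1-s}(\Om)}$ using the continuous embedding $\HH^{1-s}(\Om)\hookrightarrow L^2(\Om)$. The third summand is handled by Proposition~\ref{prop:truncation}, producing a factor $e^{-\frac{\sqrt{\lambda_1}}{2}Y}\norm{\F}_{\HH^{-s}(\Om)}$.

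The only remaining step is to absorb the exponential into the algebraic rate. The lower bound $Y\ge \frac{2\abs{\ln h_\Om}}{\sqrt{\lambda_1}}$ gives
\[
  e^{-\frac{\sqrt{\lambda_1}}{2}Y}\le e^{-\abs{\ln h_\Om}}=h_\Om,
\]
so together with $\norm{\F}_{\HH^{-s}(\Om)}\le c\norm{\F}_{\HH^{1-s}(\Om)}$ the truncation term is also $O(h_\Om\norm{\F}_{\HH^{1-s}(\Om)})$. Summing the three contributions yields the claimed bound $c\,h_\Om\norm{\F}_{\HH^{1-s}(\Om)}$.

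There is no genuine obstacle in this final assembly — all the technical effort (the sharp interpolation estimates on $I_1$, on interior intervals using analyticity of $\psi_{s,k}$ via Proposition~\ref{prop:melenk}, and on $I_M$ with the extra boundary correction) has already been discharged in Lemmas~\ref{lemma:plinI1}--\ref{lemma:pI_M} and synthesized in Corollary~\ref{cor:pestY}. The only place one has to be mildly careful is that Corollary~\ref{cor:pestY} requires $\epsilon>0$ strictly (to absorb the logarithmic factor $\abs{\ln h_\Om}^{2s+1}$), whereas the theorem's hypotheses inherit this same $\epsilon>0$; this is consistent and needs no further argument.
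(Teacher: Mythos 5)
Your proposal is correct and follows the paper's own proof essentially verbatim: the same chain of Propositions~\ref{prop:trace} and~\ref{prop:equi} for the first inequality, the same three-term splitting via Lemmas~\ref{lem:bestapprox} and~\ref{lem:generalint}, the same invocation of Lemma~\ref{lem:Clement}, Corollary~\ref{cor:pestY}, and Proposition~\ref{prop:truncation}, and the same absorption of the truncation term using the lower bound on $Y$. The extra remarks on the embeddings between the $\HH$-scales are harmless and consistent with what the paper leaves implicit.
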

\begin{proof}
  The first inequality of the assertion is due to Propositions~\ref{prop:trace} and~\ref{prop:equi}.
  Using Lemmas~\ref{lem:bestapprox} and~\ref{lem:generalint}, we get
  \[
    \norm{y^{\frac\alpha2}\nabla(u-u_h)}_{L^2(C)}\le
    \norm{y^{\frac\alpha2}\nabla(u-\pi_xu)}_{L^2(C_Y)}+\norm{y^{\frac\alpha2}\nabla(u-i_yu)}_{L^2(C_Y)}+\norm{y^{\frac\alpha2}\nabla
    u}_{L^2(C\setminus C_Y)}.
  \]
  The three terms on the right-hand side are estimated in Lemma~\ref{lem:Clement},
  Corollary~\ref{cor:pestY}, and Proposition~\ref{prop:truncation}. Hence, we get
  \[
    \norm{y^{\frac\alpha2}\nabla(u-u_h)}_{L^2(C)}\le ch_\Om \norm{\F}_{\HH^{1-s}(\Om)}+c h_\Omega \norm{\F}_{L^2(\Om)} +
    ce^{-\frac{\sqrt{\lambda_1}}2Y}\norm{\F}_{\HH^{-s}(\Om)}.
  \]
  Then, the lower bound on $Y$ yields $e^{-\frac{\sqrt{\lambda_1}}2Y}\le ch_\Om$, which implies the assertion.
\end{proof}

\begin{theorem}\label{th:pNOY}
  The total number degrees of freedom $\NOY$ in $V_{h,M}$ to achieve the order of convergence given in
  Theorem~\ref{th:perrH} behaves like 
  \[
    \NOY=O(\NO(\ln\NO)^2).
  \]
\end{theorem}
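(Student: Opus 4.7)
The proof will be a straightforward counting argument that parallels the proof of Theorem~\ref{th:NOY}, but exploits the fact that the geometric mesh coupled with a linearly increasing polynomial degree vector requires only logarithmically many intervals.

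The plan is to combine three ingredients: (i) the tensor product structure of $V_{h,M}$, which gives $\NOY = \NO \cdot \NY$; (ii) the bound $\NY = O(M^2)$ from Lemma~\ref{lem:GeoMesh_dofsInY}, which follows from summing the linear degree vector $p_m$ over the $M$ geometric intervals; (iii) the prescribed upper bound on $M$ from Theorem~\ref{th:perrH}, namely
\[
  M \le \frac{2(1+\epsilon)\abs{\ln h_\Om}}{\min(s,\beta b) \abs{\ln\sigma}},
\]
so that $M = O(\abs{\ln h_\Om})$ with the implicit constant depending only on $s$, $\beta$, $\sigma$, and $\epsilon$.

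First I would note the tensor-product identity $\NOY = \NO \cdot \NY$. Next, invoking Lemma~\ref{lem:GeoMesh_dofsInY}, I obtain $\NY = O(M^2)$. Substituting the bound on $M$ from Theorem~\ref{th:perrH} then yields $\NY = O(\abs{\ln h_\Om}^2)$. Finally, the standard relation $\NO = O(h_\Om^{-d})$ from the quasi-uniformity of $\TT_\Om$ gives $\abs{\ln h_\Om} = O(\ln \NO)$, so that $\NY = O((\ln\NO)^2)$ and therefore
\[
  \NOY = \NO \cdot \NY = O(\NO (\ln\NO)^2),
\]
which is the assertion.

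There is essentially no obstacle in this proof: all of the work has been done in the analysis leading to Theorem~\ref{th:perrH} (choosing $M$ logarithmically in $h_\Om$) and in the preparatory Lemma~\ref{lem:GeoMesh_dofsInY} (showing that the sum of polynomial degrees over the geometric mesh grows like $M^2$). The only subtlety worth mentioning is that the implicit constants in the bounds depend on $s$, $\beta$, $\sigma$, and $\epsilon$, but these are fixed parameters independent of $h_\Om$, so they are absorbed into the generic constant $c$ from the introduction.
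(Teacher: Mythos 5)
Your proposal is correct and follows exactly the paper's own argument: the tensor-product identity $\NOY=\NO\NY$, the bound $\NY=O(M^2)$ from Lemma~\ref{lem:GeoMesh_dofsInY}, and the choice $M=O(\abs{\ln h_\Om})=O(\ln\NO)$ from Theorem~\ref{th:perrH}. No differences worth noting.
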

\begin{proof}
  As a direct consequence of Lemma~\ref{lem:GeoMesh_dofsInY}, we obtain that the number of degrees
  of freedom $\NOY$ of the discretization considered in this section fulfills $\NOY =\NO\NY =O( \NO
  M^2)$. Then, the assertion follows from $M=O( \abs{\ln h_\Om})=O( \ln \NO)$.
\end{proof}

\section{Numerical Experiments}\label{sec:NumEx}

\subsection{Implementation}

For the discretization with respect to $y$ in $h$-FEM and $hp$-FEM, we use hierarchical Lobatto
polynomials (see, e.g., \cite{solin2003}) as local shape functions on $\hat I=(0,1)$ which then are
transformed onto each interval $I_m$.  Without the weight, i.e., for $s = \frac12$, this would
result in a very sparse structure of the local stiffness matrix, since those shape functions are
orthogonal.  For $s\neq\frac12$ the latter does not hold; nevertheless the global matrix is of
course still sparse.

Let $\eta_i$ for $i = 1,2,\dots,\NO$ be the ansatz functions for the discretization of
$\Omega$ and $\tau_j$ for $j = 1,2,\dots, \NY$ ansatz functions for the discretization of
$(0,Y)$.  On the cylinder $C_Y$ we then use ansatz functions of the form
$\phi_{i,j}(x,y) = \eta_i(x)\tau_j(y)$ with $i = 1,2,\dots,\NO$ and $j =
1,\dots,\NY$.

Due to this special structure, the system matrix $S\in\R^{\NOY\times
\NOY}$ for solving~\eqref{eq:weakdiscreteExtension} can be expressed by means of the
Kronecker product as
\[
  S =B^\text{mass} \otimes A^\text{stiff} + B^\text{stiff} \otimes A^\text{mass}.
\]
Here, $A\in\R^{\NO\times\NO}$ denotes matrices arising from discretization
of $\Om$ and $B\in\R^{\NY\times\NY}$ denotes matrices arising from
discretization of $(0,Y)$ given as
\begin{alignat*}{3}
  A_{ik}^\text{mass}&=\int_\Om \eta_i(x)\eta_k(x)\,dx, &\qquad A_{ik}^\text{stiff}&=\int_\Om
  \nabla\eta_i(x)\nabla\eta_k(x)\,dx,&\qquad i,k&=1,2,\dots,\NO,\\
  B_{jl}^\text{mass}&=\int_0^Y y^\alpha \tau_j(y)\tau_l(y)\,dy, &\qquad B_{jl}^\text{stiff}&=\int_0^Y
  y^\alpha \tau'_j(y)\tau'_l(y)\,dy,&\qquad j,l&=1,2,\dots,\NY.
\end{alignat*}
We observe that one can assemble the matrices $A$ and $B$ completely independent from each other. This
is advantageous since the weight $y^\alpha$ only affects the $B$ matrices, while the $A$ matrices
are standard FEM matrices, which can be computed by any FEM software. 

Using the special structure of $S$, one can implement a memory efficient solvers for the algebraic
systems without ever fully assembling $S$. This will be the topic of a forthcoming paper.

\subsection{Numerical Results}

We take the following configuration from~\cite[Section 6.1]{Nochetto2015}.  For $\Omega = (0,1)^2\subset \R^2$ the
eigenfunctions of the Dirichlet-Laplacian are known to be $\varphi_{k,l}(x) = \sin (k \pi x_1)
\sin(l \pi x_2)$ with corresponding eigenvalues $\lambda_{k,l} = \pi^2(k^2+l^2) $ for
$k,l=1,2,\dots$. For the right-hand side $\F(x) = \lambda_{1,1}^s \varphi_{1,1}(x) =
(2\pi^2)^{s}\sin(\pi x_1)\sin(\pi x_2)$, the solution $\U$ of~\eqref{eq:Prob} and $u$
of~\eqref{eq:weakExtension} are then given by
\[
  \U(x)=\sin(\pi x_1) \sin(\pi x_2)\quad\text{and}\quad
  u(x, y) = \frac{2^{1-\frac s2}\pi^s}{\Gamma(s)} \sin(\pi x_1) \sin(\pi x_2)y^sK_s(\sqrt{2}\pi y).
\]

For the discretization by means of $h$-FEM (cf. the Section~\ref{sec:lin_def} and~\ref{sec:lin}), we
choose the parameters
\[
  \mu = 0.8s,\quad M = \lceil h_\Om^{-1}\rceil,
  \quad\text{and}\quad Y   = \max\biggl(\frac{3\abs{\ln h_\Om}}{\sqrt{2}\pi},1\biggr),
\]
whereas for the discretization by means of $hp$-FEM (cf. the Section~\ref{sec:p_def}
and~\ref{sec:p}), we choose the following parameters:
\[
  \beta = 0.7,\quad \sigma = 0.125,\quad M =  \left\lceil \frac{1.75\abs{\ln h_\Om}}{s \abs{\ln \sigma}}\right\rceil,
  \quad\text{and}\quad Y   = \max\biggl(\frac{3\abs{\ln h_\Om}}{\sqrt{2}\pi},1\biggr).
\]
The orders of convergence stated by the Theorems~\ref{th:errH} and~\ref{th:perrH} in terms of
$h_\Om$ are confirmed by the results of the numerical computations given in Figure~\ref{fig:error}.
Note, that the error $\norm{y^{\frac\alpha2}\nabla(u-u_h)}_{L^2(C)}$ is evaluated by means of the
identity
\[
  \norm{y^{\frac\alpha2}\nabla(u-u_h)}_{L^2(C)}^2=d_s\int_\Om (f\trO u - f\trO u_h)\,dx,
\]
which holds due to the Galerkin orthogonality~\eqref{eq:GalerkinO}.

In Figure~\ref{fig:vergleich}, we depict the errors for both types of discretizations over the total
numbers of degrees of freedom $\NOY$. Thereby, the slower growth of $\NOY$ for the
$hp$-discretization given by Theorem~\ref{th:pNOY} in comparison to Theorem~\ref{th:NOY} clearly
leads to a drastic reduction of the number of degrees of freedom compared to $h$-FEM on a graded
mesh. For instance the number of degrees of freedom to achieve an error of less than $9\cdot
10^{-3}$ in the case $s=0.8$ reduces from $\NOY=1\,072\,692\,225$ for $h$-FEM to $\NOY=9\,661\,477$
for $hp$-FEM, which is a factor of about $111$.

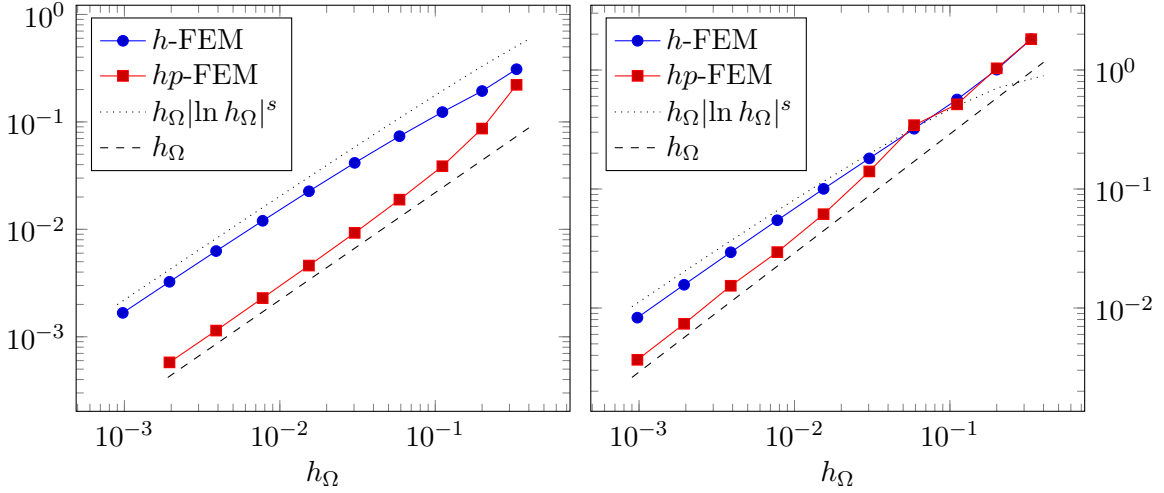
\begin{figure}
  \centering
  \begin{tikzpicture}
    \begin{loglogaxis}
      [
        xlabel=$h_\Om$,
        legend pos=north west,
        width=0.5\textwidth,
        legend cell align={left}
      ]
      \addplot table [y = Error, x  expr = 1/sqrt(\thisrow{NO})]{h_0.2};
      \addlegendentry{$h$-FEM}
      \addplot table [y = Error, x  expr = 1/sqrt(\thisrow{NO})]{hp_0.2};
      \addlegendentry{$hp$-FEM}
      \addplot [dotted, domain=0.0009:0.4, samples=10,]{1.5*x*(-ln(x))^(0.2)};
      \addlegendentry{$h_\Om\abs{\ln h_\Om}^s$}
      \addplot [dashed, domain=0.0019:0.4, samples=10,]{0.22*x};
      \addlegendentry{$h_\Om$}
    \end{loglogaxis}
  \end{tikzpicture}~
  \begin{tikzpicture}
    \begin{loglogaxis}
      [
        xlabel=$h_\Om$,
        legend pos=north west,
        width=0.5\textwidth,
        ylabel near ticks, yticklabel pos=right,
        legend cell align={left}
      ]
      \addplot table [y = Error, x  expr = 1/sqrt(\thisrow{NO})]{h_0.8};
      \addlegendentry{$h$-FEM}
      \addplot table [y = Error, x  expr = 1/sqrt(\thisrow{NO})]{hp_0.8};
      \addlegendentry{$hp$-FEM}
      \addplot [dotted, domain=0.0009:0.4, samples=10,]{2.4*x*(-ln(x))^(0.8)};
      \addlegendentry{$h_\Om\abs{\ln h_\Om}^s$}
      \addplot [dashed, domain=0.0009:0.4, samples=10,]{2.9*x};
      \addlegendentry{$h_\Om$}
    \end{loglogaxis}
  \end{tikzpicture}
  \caption{$\norm{y^{\frac\alpha2}\nabla(u-u_h)}_{L^2(C)}$ for $s=0.2$ (left) and $s=0.8$ (right)
  over $h_\Om$.}\label{fig:error}
\end{figure}

\begin{figure}
  \centering
  \begin{tikzpicture}
    \begin{loglogaxis}
      [
        xlabel=$\NOY$,
        legend pos=south west,
        width=0.5\textwidth,
        legend cell align={left},
        xtick={10, 100, 1000, 10000, 100000, 1000000, 10000000, 100000000, 1000000000}
      ]
      \addplot table [y = Error, x = NOY]{h_0.2};
      \addlegendentry{$h$-FEM}
      \addplot table [y = Error, x = NOY]{hp_0.2};
      \addlegendentry{$hp$-FEM}
    \end{loglogaxis}
  \end{tikzpicture}~
  \begin{tikzpicture}
    \begin{loglogaxis}
      [
        xlabel=$\NOY$,
        legend pos=south west,
        width=0.5\textwidth,
        ylabel near ticks, yticklabel pos=right,
        legend cell align={left},
        xtick={10, 100, 1000, 10000, 100000, 1000000, 10000000, 100000000, 1000000000}
      ]
      \addplot table [y = Error, x = NOY]{h_0.8};
      \addlegendentry{$h$-FEM}
      \addplot table [y = Error, x = NOY]{hp_0.8};
      \addlegendentry{$hp$-FEM}
    \end{loglogaxis}
  \end{tikzpicture}
  \caption{$\norm{y^{\frac\alpha2}\nabla(u-u_h)}_{L^2(C)}$ for $s=0.2$ (left) and $s=0.8$ (right)
  over the total number of degrees of freedom $\NOY$.}\label{fig:vergleich}
\end{figure}

\appendix

\section{Estimates for \boldmath{$\psi_s$} and its derivatives}

We begin with a representation of the derivatives of the expression $z^sK_s(z)$, where $K_s$ are the
modified Bessel functions of second kind. It which will be used in the sequel to derive estimates
for the derivatives of $\psi_s$.

\begin{lemma}\label{lem:SolutionRegularity_derivativeFormula}
  The derivatives of $z^sK_s(z)$ of order $n\in\N_0$ can be calculated as
  \begin{equation}
    (z^{s}K_{s}(z))^{(n)}
    = \sum_{m=0}^n  a_m^n  z^{s-m}K_{s-(n-m)}(z), \label{eq:SolutionRegularity_representation_hypo1}
  \end{equation}
  where the coefficients $a_m^n$ are given by 
  \begin{alignat}{2}
    a_0^n {}&= (-1)^n, \label{eq:SolutionRegularity_representation_hypo2}\\
    a_m^n {}&= (-1)^{n+m}\frac{1}{2^m} \frac{n!}{m!(n-2m)!} &\quad& \text{for } 1\leq m \leq \Bigl\lfloor
    \frac{n}{2} \Bigr\rfloor, \label{eq:SolutionRegularity_representation_hypo3}\\
    a_m^n {}&= 0  &\quad &\text{for } \Bigl\lfloor \frac{n}{2} \Bigr\rfloor<m\le n.  \label{eq:SolutionRegularity_representation_hypo4}
  \end{alignat}
\end{lemma}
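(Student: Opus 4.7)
The plan is to prove the formula by induction on $n$, using a standard differentiation identity for modified Bessel functions to carry out the inductive step, and then checking that the recursion for the coefficients matches the closed form.

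For the base case $n=0$, the right-hand side reduces to $a_0^0 z^s K_s(z) = z^s K_s(z)$ since $(-1)^0 = 1$, which is trivially the zeroth derivative. For the inductive step, I would use the identity
\[
  \bigl(z^\alpha K_\nu(z)\bigr)' = (\alpha-\nu)z^{\alpha-1}K_\nu(z) - z^\alpha K_{\nu-1}(z),
\]
which follows from the standard recurrence $K_\nu'(z) = -K_{\nu-1}(z) - \tfrac{\nu}{z}K_\nu(z)$ (see \cite[9.6.26]{Abramowitz1964}). Applied to each summand $z^{s-m}K_{s-(n-m)}(z)$ with $\alpha = s-m$, $\nu = s-(n-m)$ (so $\alpha - \nu = n-2m$), this yields
\[
  \bigl(z^{s-m}K_{s-(n-m)}(z)\bigr)' = (n-2m)z^{s-m-1}K_{s-(n-m)}(z) - z^{s-m}K_{s-(n+1-m)}(z).
\]
Re-indexing the first group of terms via $j = m+1$ and keeping the second group at $j=m$, the induction hypothesis gives
\[
  (z^sK_s(z))^{(n+1)} = \sum_{j=0}^{n+1} a_j^{n+1} z^{s-j}K_{s-(n+1-j)}(z)
  \quad\text{with}\quad a_j^{n+1} = -a_j^n + (n-2j+2)\,a_{j-1}^n,
\]
where we set $a_{-1}^n = a_{n+1}^n = 0$.

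It remains to verify that this recursion reproduces the coefficients \eqref{eq:SolutionRegularity_representation_hypo2}--\eqref{eq:SolutionRegularity_representation_hypo4}. The endpoint $j=0$ gives $a_0^{n+1} = -a_0^n = (-1)^{n+1}$ directly. For $1 \le j \le \lfloor (n+1)/2\rfloor$, plugging the hypothesised form for $a_j^n$ and $a_{j-1}^n$ and factoring $(-1)^{n+j+1}/2^j$ leads, after collecting over the common denominator $j!(n+1-2j)!$, to the Pascal-type identity
\[
  \frac{n!}{j!(n-2j)!} + \frac{2\,n!}{(j-1)!(n-2j+1)!}
  = \frac{n!\bigl[(n-2j+1) + 2j\bigr]}{j!(n-2j+1)!}
  = \frac{(n+1)!}{j!(n+1-2j)!},
\]
which is precisely the required expression for $a_j^{n+1}$. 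The vanishing cases $j > \lfloor (n+1)/2\rfloor$ are handled by noting that both $a_j^n$ and $a_{j-1}^n$ are zero, or by checking the single boundary case $j = \lfloor(n+1)/2\rfloor + 1$ that may arise when $n$ changes parity; there the second term of the recursion vanishes because the factor $(n-2j+2)$ equals zero, while the first term is zero by hypothesis.

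The only real obstacle is the careful bookkeeping of the index ranges in the inductive step, particularly the boundary case when $n$ switches parity (so that $\lfloor (n+1)/2\rfloor \ne \lfloor n/2\rfloor$); checking this case separately and exploiting the vanishing factor $n-2j+2$ dispatches the issue cleanly. All remaining manipulations are routine factorial algebra.
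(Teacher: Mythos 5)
Your proof is correct and follows essentially the same route as the paper: induction on $n$ based on the Bessel recurrence $(z^\nu K_\nu(z))'=-z^\nu K_{\nu-1}(z)$ (your identity for $(z^\alpha K_\nu(z))'$ is just its product-rule form), leading to the identical coefficient recursion $a_j^{n+1}=-a_j^n+(n-2(j-1))a_{j-1}^n$ and the same Pascal-type verification of the closed form. One bookkeeping point: for odd $n$ and $j=\lfloor\frac{n+1}{2}\rfloor=\frac{n+1}{2}$, the value $a_j^n$ is not given by \eqref{eq:SolutionRegularity_representation_hypo3} but equals $0$ by \eqref{eq:SolutionRegularity_representation_hypo4}, and the factor $n-2j+2$ does not vanish there (it equals $1$); so this sub-case is not covered by your Pascal identity nor by your "vanishing factor" argument, and one must instead verify directly that $a_{j-1}^n$ coincides with the claimed $a_j^{n+1}$ --- exactly the computation the paper carries out at the end of its third bullet.
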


\begin{proof}
  We prove this assertion by induction. To this end, we first collect some basic results for the
  modified Bessel function of second kind.  In~\cite[9.6.28]{Abramowitz1964}, we find for all $\nu
  \in\R$
  \[
    \frac{1}{z}\frac{d }{d z}(z^\nu K_\nu(z))= -z^{\nu-1}K_{\nu-1}(z).
  \]
  As a consequence, there holds
  \begin{equation} \label{eq:SolutionRegularity_representation_baseCase}
    ( z^\nu K_\nu(z) )'= -z^\nu K_{\nu-1}(z).
  \end{equation} 
  Using the latter result, we get the following formula for $m,l \in \N_0$:
  \begin{align*}
    (z^{s-m}K_{s-l}(z))'
    &= (z^{l-m} z^{s-l}K_{s-l}(z))'\\
    &=  - z^{l-m} z^{s-l}K_{s-l-1}(z) + (l-m) z^{l-m-1} z^{s-l}K_{s-l}(z)\\
    &= - z^{s-m} K_{s-(l+1)}(z) + (l-m) z^{s-m-1} K_{s-l}(z). 
  \end{align*}
  By means of this, we obtain for $m,n\in \N_0$ with $m\leq n$ by setting $l=n-m\ge 0$
  \begin{equation}
    (z^{s-m} K_{s-(n-m)}(z))' =-z^{s-m}K_{s-(n+1-m)}(z) + (n-2m)z^{s-m-1} K_{s-(n-m)}(z).\label{eq:indu1}
  \end{equation} 
  These elementary results build the basis for the induction: The hypothesis
  \eqref{eq:SolutionRegularity_representation_hypo1} clearly holds for $n=0$.
  
  Assuming that
  \eqref{eq:SolutionRegularity_representation_hypo1}  holds for some $n\in \N_0$, we deduce
  \[
    (z^{s}K_{s}(z))^{(n+1)} = \sum_{m=0}^n  a_m^n( z^{s-m}K_{s-(n-m)}(z))'.
  \]
  Employing \eqref{eq:indu1}, we continue with
  \begin{align*} \label{eq:SolutionRegularity_representation_inductionStep_1}
    (z^{s}K_{s}(z))^{(n+1)}
    &=\sum_{m=0}^n  a_m^n \bigl(-z^{s-m}K_{s-(n+1-m)}(z) + (n-2m)z^{s-m-1} K_{s-(n-m)}(z) \bigr)\\
    &=\sum_{m=0}^n - a_m^n  z^{s-m}K_{s-(n+1-m)}(z)\\
    &\quad+ \sum_{m=1}^{n+1} a_{m-1}^n (n-2(m-1))z^{s-m} K_{s-(n+1-m)}(z)\\
    &=- a_0^n z^{s}K_{s-(n+1)}(z) -n a_{n}^n z^{s-n-1} K_{s}(z)\\
    &\quad+\sum_{m=1}^n \bigl( - a_m^n + a_{m-1}^n (n-2(m-1)) \bigr) z^{s-m}K_{s-(n+1-m)}(z).
  \end{align*}
  It remains to show that
  \[
    a_0^{n+1}=- a_0^n,\qquad a_m^{n+1}=- a_m^n + a_{m-1}^n (n-2(m-1))\text{ for }1\leq m \leq n,\qquad a_{n+1}^{n+1}=-n a_{n}^n.
  \]
  The first and third equation are obvious due to \eqref{eq:SolutionRegularity_representation_hypo2}
  and \eqref{eq:SolutionRegularity_representation_hypo4}. Thus, we only elaborate on the second.
  We distinguish three cases for $1\le m\le n$:
  \begin{itemize}
    \item\textbf{\boldmath{$m\ge\bigl\lfloor\frac{n+1}{2}\bigr\rfloor+2$}:}
      Again due to \eqref{eq:SolutionRegularity_representation_hypo4}, we have $a_m^n =a_{m-1}^n=0$,
      since $m,m-1>\bigl\lfloor\frac{n}{2}\bigr\rfloor$. Hence, it holds
      \[
        - a_m^n + a_{m-1}^n (n-2(m-1))=0=a_{m}^{n+1}.
      \]
    \item\textbf{\boldmath{$m=\bigl\lfloor\frac{n+1}{2}\bigr\rfloor+1$}:} Here, it holds
      $m>\bigl\lfloor\frac{n}{2}\bigr\rfloor$ and we already know that $a_m^n=0$. Moreover, in
      case that $n$ is even, we deduce $m=\frac n2+1$ and
      \[
        n-2(m-1) = n-2\biggl(\frac{n}2+1-1\biggr)= 0.
      \]
      If $n$ is odd, we obtain $a_{m-1}^n =0$ since $m-1=\frac{n+1}{2}>\bigl\lfloor\frac{n}{2}
      \bigr\rfloor$. As a consequence, we get
      \[
        - a_m^n + a_{m-1}^n (n-2(m-1))=0=a_{m}^{n+1}.
      \]
  \item \textbf{\boldmath{$m\le\bigl\lfloor\frac{n+1}{2}\bigr\rfloor$}:} Here, we again distinguish
    between $n$ even and $n$ odd. In the first case, we have $\bigl\lfloor\frac{n+1}{2}
    \bigr\rfloor=\bigl\lfloor\frac{n}2\bigr\rfloor$. Hence, it holds
    $m-1,m\le\bigl\lfloor\frac{n}2\bigr\rfloor$ and by means of
    \eqref{eq:SolutionRegularity_representation_hypo3}, we get
      \begin{align*}
        -a_m^n + (n-2(m-1))a_{m-1}^n
               &= (-1)^{n+m+1} \frac{1}{2^m} \frac{n!}{m!(n-2m)!}\\
               &\quad +(n-2(m-1)) (-1)^{n+m-1} \frac{1}{2^{m-1}} \frac{n!}{(m-1)!(n-2(m-1))!} \\
               &=(-1)^{n+1+m} \frac{1}{2^m} \frac{(n+1)!}{m!(n+1-2m)!} = a_m^{n+1}.
      \end{align*}
      In case that $n$ is odd, we have that $\bigl\lfloor\frac{n+1}{2}\bigr\rfloor=\bigl\lfloor
      \frac{n}{2}\bigr\rfloor+1$. Thus, for $m\leq \bigl\lfloor\frac{n+1}{2}\bigr\rfloor-1$, we can
      reuse the calculations from before. If $m=\bigl\lfloor\frac{n+1}{2}\bigr\rfloor=\frac{n+1}2$,
      we have $a^n_m=0$ such that
      \begin{align*}
        -a_m^n + (n-2(m-1))a_{m-1}^n
        &=a^n_{m-1}=(-1)^{n+\frac{n-1}2}\frac{1}{2^{\frac{n-1}2}} \frac{n!}{\bigl(\frac{n-1}2\bigr)!}\\
        &=(-1)^{n+1+\frac{n+1}2}\frac{1}{2^{\frac{n+1}2}} \frac{(n+1)!}{\bigl(\frac{n+1}2\bigr)!}=a^{n+1}_m.
      \end{align*}
  \end{itemize}
  This ends the proof.
\end{proof}

We next analyze $\psi_s$ defined in~\eqref{eq:psi} and its derivatives with respect to its
boundedness properties.

\begin{lemma}\label{lem:psi-0}
  For $z\ge 0$, it holds
  \[
    0<\psi_s(z) \leq \psi_s(0) = 1.
  \]
\end{lemma}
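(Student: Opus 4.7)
The plan is to establish the boundary value $\psi_s(0)=1$ first, then positivity on $(0,\infty)$, and finally monotonicity via the derivative identity already isolated in Lemma~\ref{lem:SolutionRegularity_derivativeFormula}.

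For the boundary value I would use the known small-argument asymptotics of $K_s$, namely $K_s(z)\sim\tfrac12\Gamma(s)(z/2)^{-s}$ as $z\to 0^+$ for $s>0$ (see~\cite[9.6.9]{Abramowitz1964}). This gives $z^s K_s(z)\to 2^{s-1}\Gamma(s)$, so that
\[
  \psi_s(0)=c_s\cdot 2^{s-1}\Gamma(s)=\frac{2^{1-s}}{\Gamma(s)}\cdot 2^{s-1}\Gamma(s)=1.
\]
Strict positivity of $\psi_s$ on $(0,\infty)$ is immediate from the standard fact that $K_s(z)>0$ for all real orders and all $z>0$ (which follows from the integral representation of $K_s$).

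For the upper bound I would use the special case $n=1$ of the formula just proven, namely \eqref{eq:SolutionRegularity_representation_baseCase} with $\nu=s$:
\[
  (z^s K_s(z))'=-z^s K_{s-1}(z).
\]
Since $s\in(0,1)$ and $K_\nu=K_{-\nu}$, we have $K_{s-1}(z)=K_{1-s}(z)>0$ for $z>0$. Therefore $(z^s K_s(z))'<0$ on $(0,\infty)$, so $\psi_s$ is strictly decreasing on $[0,\infty)$, which together with $\psi_s(0)=1$ gives $\psi_s(z)\le\psi_s(0)=1$.

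There is no substantial obstacle here; the only point that requires a little care is justifying the limit $z^s K_s(z)\to 2^{s-1}\Gamma(s)$ at the origin, which is a standard consequence of the series representation of $K_s$ around $0$. Everything else reduces to positivity of $K_s$ (which is classical) and the derivative formula~\eqref{eq:SolutionRegularity_representation_baseCase} that has already been recorded in the proof of Lemma~\ref{lem:SolutionRegularity_derivativeFormula}.
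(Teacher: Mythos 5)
Your proposal is correct and follows essentially the same route as the paper: positivity of $K_\nu$, monotonicity via the derivative identity~\eqref{eq:SolutionRegularity_representation_baseCase}, and the value at the origin via the small-argument asymptotics~\cite[9.6.9]{Abramowitz1964}. The only cosmetic difference is that you invoke $K_{s-1}=K_{1-s}>0$ explicitly, whereas the paper uses the positivity of $z^\nu K_\nu(z)$ for all $\nu>-1$ directly.
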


\begin{proof}
  Since $z^\nu K_\nu(z)>0$ for all $z>0$ and $\nu>-1$, see, e.g.,~\cite[9.6.1]{Abramowitz1964}, and due
  to~\eqref{eq:SolutionRegularity_representation_baseCase}, the function $\psi_s$ is positive and
  monotone decreasing such that $\psi_s(z) \leq \psi_s(0)$ for all $z \ge0$.

  In \cite[9.6.9]{Abramowitz1964} one can find for $\nu>0$ the following behavior of the modified Bessel
  function of the second kind for $z\rightarrow0$:
  \[
    K_\nu(z) \sim\frac{\Gamma(\nu)}{2^{1-\nu}}z^{-\nu}.
  \]
  As a consequence, we obtain by~\eqref{eq:psi}
  \begin{equation}\label{eq:psi=1}
    \lim_{z\rightarrow 0}\psi_s(z)=\lim_{z\rightarrow 0}c_s z^s K_s(z)=1,
  \end{equation}
  which yields together with the foregoing observations the assertion.
\end{proof}

\begin{lemma}\label{lem:psi-2}
  Let $r\in[0,1]$. There exists a constant $c > 0$ depending only on $s$, such that for any $z>0$
  and $n\in\N$ it holds
  \[
    \abs{z^n  \psi_s^{(n)}(z)} \leq c 8^n n! z^{2s-r}.
  \]
\end{lemma}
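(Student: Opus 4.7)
Plan. My approach is to apply the explicit derivative formula from Lemma~\ref{lem:SolutionRegularity_derivativeFormula}, which upon multiplying by $z^n$ and using the symmetry $K_{-\nu}=K_\nu$ yields
\[
  z^n\psi_s^{(n)}(z) \;=\; c_s\sum_{m=0}^{\lfloor n/2 \rfloor} a_m^n\, z^{n+s-m}K_{n-m-s}(z),
\]
with the order $\nu:=n-m-s$ strictly positive on every admissible $m$ because $m\le n/2$ and $s\in(0,1)$. The basic pointwise estimate is $z^\nu K_\nu(z)\le 2^{\nu-1}\Gamma(\nu)$ for all $\nu>0$ and $z>0$, which is nothing but Lemma~\ref{lem:psi-0} rewritten via $\psi_\nu(z)=c_\nu z^\nu K_\nu(z)\le 1$. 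This yields termwise $z^{n+s-m}K_{n-m-s}(z)\le 2^{n-m-s-1}\Gamma(n-m-s)\,z^{2s}$. Combined with the coefficient bound $|a_m^n|\le\tfrac{n!}{2^m m!(n-2m)!}$ from Lemma~\ref{lem:SolutionRegularity_derivativeFormula}, the monotonicity $\Gamma(n-m-s)\le C_s(n-m-1)!$ (valid for $s\in(0,1)$), and the identity $\tfrac{(n-m-1)!}{m!(n-2m)!}=\tfrac{1}{n-m}\binom{n-m}{m}\le\tfrac{2^{n-m}}{n-m}$, the sum over $m$ collapses to a geometric series, producing the uniform estimate
\[
  |z^n\psi_s^{(n)}(z)|\;\le\; c\,4^n\,n!\,z^{2s}\qquad\text{for all }z>0,\;n\ge 1.
\]

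For $z\le 1$ this already proves the lemma, since $z^{2s}\le z^{2s-r}$ whenever $r\in[0,1]$ and $z\le 1$. On the intermediate band $1\le z\le 4n^2$, I use the same uniform bound together with $z^{2s}=z^{2s-r}z^r\le(4n^2)^r z^{2s-r}\le 4n^2\, z^{2s-r}$ (valid for $r\le 1$ and $4n^2\ge 1$) and the elementary inequality $n^2\le 2\cdot 2^n$ (whose maximum $9/8$ is attained at $n=3$), which converts $c\,4^n n!\,n^2$ into $c\,8^n n!$ and gives the claim here. Finally, for $z\ge 4n^2$ I replace the bound $z^\nu K_\nu(z)\le 2^{\nu-1}\Gamma(\nu)$ by the large-argument asymptotic $K_\nu(z)\le 2\sqrt{\pi/(2z)}\,e^{-z}$ (valid for $z\gtrsim\nu^2$, cf.\ Abramowitz 9.7.2), obtaining $z^{n+s-m}K_{n-m-s}(z)\le\sqrt{2\pi}\,z^{n+s-m-1/2}e^{-z}$. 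Factoring out $z^{2s-r}$ and bounding the remainder $z^{n-s-m-1/2+r}e^{-z}$ by its maximum over $z>0$, which by Stirling is $\le c\,(n-m)!$, the termwise estimate becomes $z^{n+s-m}K_{n-m-s}(z)\le c\,(n-m)!\,z^{2s-r}$. The summation $\sum_m|a_m^n|(n-m)!\le n!\sum_m 2^{n-2m}\le c\,n!\,2^n$ then yields $|z^n\psi_s^{(n)}(z)|\le c\,2^n n!\,z^{2s-r}\le c\,8^n n!\,z^{2s-r}$.

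The main obstacle is the tail regime $z\ge 4n^2$: the Bessel asymptotic is only accurate once $z$ is of order $\nu^2$ or larger, which forces the intermediate band to extend up to $z\sim n^2$ rather than $z\sim n$. The $(n-m)!$ factor produced by Stirling on $z^{n-s-m-1/2+r}e^{-z}$ then has to combine cleanly with the combinatorial factor from $|a_m^n|$, and the $4^n$ from the uniform estimate has to absorb the $n^2$-type loss from the intermediate band. This is exactly why the factor in the final bound is $8^n=4^n\cdot 2^n$: the additional $2^n$ provides the slack needed to join the three regimes via $n^2\le 2\cdot 2^n$.
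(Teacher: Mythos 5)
Your argument is correct in outline but takes a genuinely different route from the paper's proof. The paper never splits the $z$-axis: it writes $z^{\nu+r}K_\nu(z)=z^{\nu+r-\nu_0}e^{-z}\cdot z^{\nu_0}e^zK_\nu(z)$ with $\nu_0=\min(\nu,\tfrac12)$, invokes the Miller--Samko monotonicity of $z^{\nu_0}e^zK_\nu(z)$ to locate the maximum of $z^{\nu+r}K_\nu(z)$ in $[0,\nu+r-\nu_0]$, and so obtains $z^{\nu+r}K_\nu(z)\le(\nu+r-\nu_0)^r/c_\nu$ for all $z>0$ and all $r\in[0,1]$ at once; the weight $z^{2s-r}$ then drops out of the derivative formula in a single stroke, with the same combinatorial bookkeeping ($c_s/c_{\nu}\le c\,2^{n-m}(n-m-1)!$, the bound on $\abs{a_m^n}$, and $\lfloor n/2\rfloor+1\le 2^n$ terms) that you also perform. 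Your version first proves the $r=0$ bound $c\,4^nn!\,z^{2s}$ from $\psi_\nu\le1$ alone --- more elementary than the paper at this stage --- and then buys the extra factor $z^{-r}$ by a three-regime split, paying $n^2\le2\cdot2^n$ on the band $1\le z\le4n^2$ and using the exponential decay of $K_\nu$ beyond it. What the paper's single monotonicity lemma buys is that it replaces both your intermediate band and your tail regime, and in particular avoids any large-argument analysis of $K_\nu$.

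Two steps of yours need tightening, though neither breaks the proof. First, Abramowitz 9.7.2 is an asymptotic \emph{expansion}, not an inequality, and the uniformity of $K_\nu(z)\le2\sqrt{\pi/(2z)}\,e^{-z}$ in $\nu$ up to $\nu\sim n$ on $z\ge4n^2$ is exactly the delicate point; you should instead use a rigorous bound such as $K_\nu(z)\le\sqrt{\pi/(2z)}\,e^{-z}\bigl(1-\tfrac{\nu-1/2}{2z}\bigr)^{-(\nu+\frac12)}$ for $\nu\ge\tfrac12$ (from the standard integral representation of $K_\nu$), which on $z\ge4\nu^2$ is indeed at most $2\sqrt{\pi/(2z)}\,e^{-z}$, together with the trivial case $\nu<\tfrac12$. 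Second, the exponent $n-s-m-\tfrac12+r$ is negative for $n=1$, $m=0$, $s>\tfrac12+r$, so the supremum of $z^{n-s-m-1/2+r}e^{-z}$ over all $z>0$ is infinite in that case; you must take the supremum over $z\ge4n^2$ only, where the function is decreasing whenever the exponent is nonpositive, and this still yields the $c\,(n-m)!$-type bound you need.
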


\begin{proof}
  In order to deduce the bounds for the derivatives of $\psi_s$, we continue with collecting some
  auxiliary results. As before in the proof of Lemma~\ref{lem:psi-0}, we have that $z^rz^\nu
  K_\nu(z)$ is positive for all $z>0$ and $\nu >-1$. Let $\nu_0=\min\bigl(\nu,\frac12\bigr)$. From
  \cite[Theorem 5]{MillerSamko}, we obtain for $z>0$ that $z^{\nu_0}e^zK_\nu(z)$ is a decreasing
  function for all $\nu\ge0$. To employ this, we consider the product
  \[
    z^{\nu+r}K_\nu(z)=z^{\nu+r-\nu_0}e^{-z}\cdot z^{\nu_0}e^{z}K_\nu(z).
  \]
  and note that
  \[
    \operatorname*{arg\,max}_{z\ge 0}z^{\nu + r - \nu_0}e^{-z}=\nu + r -\nu_0\qquad\text{and}\qquad (z^{\nu
    + r - \nu_0}e^{-z})'<0\quad\text{for}\quad z>\nu + r -\nu_0.
  \]
  Hence, $z^{\nu+r}K_\nu(z)$ admits its maximum in the interval $[0,\nu+r-\nu_0]$.  Due to
  Lemma~\ref{lem:psi-0}, we consequently get by~\eqref{eq:psi}
  \begin{equation}\label{eq:boundK}
    z^{\nu+r}K_\nu(z)=\frac{z^r}{c_\nu}\psi_\nu(z)\leq \frac{(\nu + r -
    \nu_0)^r}{c_\nu}\psi_\nu(0)=\frac{(\nu + r - \nu_0)^r}{c_\nu}.
  \end{equation}
  Next, we are concerned with the bounds for the derivatives of $\psi_s$. Employing Lemma
  \ref{lem:SolutionRegularity_derivativeFormula} and the relation $K_\nu(z) = K_{-\nu}(z)$ for
  $\nu\in\R$, see \cite[9.6.6]{Abramowitz1964}, we obtain by~\eqref{eq:psi}
  \[
    \abs{z^{n-2s+r}  \psi_s^{(n)}(z)}=\Abs{z^{n - 2s+r}
      \sum_{m=0}^{\lfloor \frac n2 \rfloor} c_s a_m^n
    z^{s-m}K_{s-(n-m)}(z)}\leq\sum_{m=0}^{\lfloor \frac n2\rfloor}c_s \abs{a^n_m}\abs{z^{n- m
    -s+r} K_{n - m -s}(z)},
  \]
  where the coefficients are given by Lemma \ref{lem:SolutionRegularity_derivativeFormula}. Let
  $\nu(m,n)=n-m-s$. We observe that $\nu(m,n)>0$ since $m\leq \bigl\lfloor \frac n2 \bigr\rfloor$.
  Consequently, \eqref{eq:boundK} yields
  \[
    \abs{z^{n-2s+r}  \psi_s^{(n)}(z)}
    \leq\sum_{m=0}^{\lfloor \frac n2 \rfloor} \frac{c_s}{c_{\nu(m,n)}}\abs{a^n_m} (\nu(m,n) + r -
    \nu_0(m,n))^r
  \]
  with $\nu_0(m,n)=\min\bigl(\nu(m,n),\frac12\bigr)$. Since $n-m\ge1$, it holds 
  \begin{equation}\label{eq:32_4}
    (\nu(m,n) + r - \nu_0(m,n))^r\le (n-m+r)^r\le n-m+1.
  \end{equation}
  Further, using
  \[
    \Gamma(l+\rho)\le \frac{\Gamma(l+1)}{(l+\frac\rho2)^{1-\rho}}
  \]
  from~\cite[estimate (8)]{Lorch1984}, which holds for all $l\in\N_0$ and $0<\rho<1$ , we get by choosing $l=n-m-1$ and $\rho=1-s$
  \begin{equation}\label{eq:32_5}
    \begin{aligned}
      \frac{c_s}{c_{\nu(m,n)}}&=\frac{2^\nu\Gamma(\nu(m,n))}{2^s\Gamma(s)}=\frac1{4^s\Gamma(s)}2^{n-m}\Gamma(\nu(m,n))
      \le\frac1{4^s\Gamma(s)}\frac{2^{n-m}\Gamma(n-m)}{(n-m-1+\frac{1-s}{2})^s}\\
      &\le\frac1{2^s\Gamma(s)(1-s)^s}2^{n-m}(n-m-1)!=c 2^{n-m}(n-m-1)!
    \end{aligned}
  \end{equation}
  with a constant $c$ depending only on $s$.  Using~\eqref{eq:32_4} and~\eqref{eq:32_5}, we get
  \begin{equation}\label{eq:32_2}
    \abs{z^{n-2s+r}  \psi_s^{(n)}(z)}\le c \sum_{m=0}^{\lfloor \frac n2
    \rfloor}2^{n-m}(n-m+1)!\abs{a^n_m}.
  \end{equation}
  Estimating each summand separately, yields by means of Lemma~\ref{lem:SolutionRegularity_derivativeFormula}
  \begin{equation}
    \begin{aligned}
      2^{n-m}(n-m+1)!\abs{a^n_m}
      &= 2^{n-m} (n-m+1)!\frac{1}{2^m} \frac{n!}{m!(n-2m)!}\\\
      &= 2^{n-2m}(m+1)n!\frac{(n-m+1)!}{(m+1)!(n-m+1-(m+1))!}\\
      &=2^{n-2m}(m+1)n!\binom{n-m+1}{m+1}\\
      &\leq 4^n n!\,.\label{eq:32_3}
    \end{aligned}
  \end{equation}
  For the last step, notice that
  \[
    2^{-2m}(m+1)\leq 1\quad\text{and}\quad {{n-m+1}\choose{m+1}}\leq 2^n.
  \]
  Finally, \eqref{eq:32_2} and \eqref{eq:32_3} yield the assertion since $\bigl\lfloor \frac n2
  \bigr\rfloor+1\leq 2^n$.
\end{proof}

Finally, we state a result about the exponential decay of $\psi_s$ and its derivative.
\begin{lemma}\label{lem:psi-3}
  The following assertions hold:
  \begin{enumerate}[(a)]
    \item\label{it:1} Let $s_0=\min(s,\frac12)$. Moreover, let $z\geq a>0$ and $r\geq s_0-s$. Then
      there exists a constant $c$ only depending on $r$ and $s$ such that
      \[
        \abs{z^{r}\psi_s(z)}\leq c e^{-\frac{z}2}\cdot
        c_sa^{s_0}e^aK_s(a).
      \]
    \item\label{it:2} Let $s_0'=\min(1-s,\frac12)$. Moreover, let $z\geq a>0$ and $r\geq s_0'-s$.
      Then there exists a constant $c$ only depending on $r$ and $s$ such that
      \[
        \abs{z^{r}\psi_s'(z)}\leq c e^{-\frac{z}2}\cdot
        c_{s}a^{s_0'}e^aK_{1-s}(a).
      \]
  \end{enumerate}
\end{lemma}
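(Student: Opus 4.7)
The plan is to reduce everything to the monotonicity statement from \cite{MillerSamko} cited in the proof of Corollary~\ref{cor:psi-3}, namely that $z \mapsto z^{s_0} e^z K_s(z)$ is decreasing on $[0,\infty)$, and analogously $z \mapsto z^{s_0'} e^z K_{1-s}(z)$ is decreasing. The key algebraic trick is to split the factor $e^{-z}$ produced when we isolate the monotone quantity into $e^{-z/2}\cdot e^{-z/2}$ and absorb one half into a polynomial prefactor.

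For part~(\ref{it:1}), I would write, using the definition~\eqref{eq:psi},
\[
  z^r \psi_s(z) = c_s z^{r+s} K_s(z)
  = c_s z^{r+s-s_0} e^{-z}\cdot z^{s_0} e^z K_s(z).
\]
Since $z \geq a$, the monotonicity of $z^{s_0} e^z K_s(z)$ gives $z^{s_0} e^z K_s(z) \leq a^{s_0} e^a K_s(a)$. The assumption $r \geq s_0 - s$ guarantees $r+s-s_0 \geq 0$, so $z^{r+s-s_0} e^{-z/2}$ is bounded on $[0,\infty)$ by a constant depending only on $r$ and $s$ (it has its maximum at $z = 2(r+s-s_0)$). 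Splitting $e^{-z} = e^{-z/2}\cdot e^{-z/2}$ and absorbing one factor into this bound yields the claimed estimate
\[
  \abs{z^r \psi_s(z)} \leq c\,e^{-z/2}\cdot c_s a^{s_0} e^a K_s(a).
\]

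For part~(\ref{it:2}), I first need to differentiate $\psi_s$. By \eqref{eq:SolutionRegularity_representation_baseCase}, $(z^s K_s(z))' = -z^s K_{s-1}(z)$, and using $K_{s-1}(z) = K_{1-s}(z)$ (cf.~\cite[9.6.6]{Abramowitz1964}), we get
\[
  \psi_s'(z) = -c_s z^s K_{1-s}(z).
\]
Then exactly the same manipulation as above, but now applied with $\nu = 1-s$ in place of $\nu = s$, gives
\[
  \abs{z^r \psi_s'(z)} = c_s z^{r+s-s_0'} e^{-z}\cdot z^{s_0'} e^z K_{1-s}(z),
\]
and Miller--Samko's monotonicity applied with index $1-s$ (whose associated optimal exponent is $s_0' = \min(1-s,\tfrac12)$) together with $r \geq s_0' - s$ bounds the polynomial prefactor after the same $e^{-z/2}\cdot e^{-z/2}$ split, yielding the second inequality.

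The argument is essentially routine once the decomposition is written down; the only mildly delicate point is keeping track that the Miller--Samko exponents match the index of the Bessel function appearing after differentiation (i.e., $s_0'$ with $K_{1-s}$ in the derivative estimate, not $s_0$ with $K_s$), and that the hypothesis on $r$ is exactly what is needed to make $z^{r+s-s_0} e^{-z/2}$ (respectively $z^{r+s-s_0'} e^{-z/2}$) bounded on $[0,\infty)$.
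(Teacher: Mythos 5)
Your proof is correct and follows essentially the same route as the paper: isolate the Miller--Samko monotone quantity, split $e^{-z}=e^{-z/2}e^{-z/2}$, and bound the polynomial prefactor using the hypothesis on $r$. The only cosmetic difference is in part (b), where the paper rewrites $z^r\psi_s'(z)$ as a multiple of $z^{r+2s-1}\psi_{1-s}(z)$ and invokes part (a), while you redo the monotonicity argument directly for $K_{1-s}$ --- the two are equivalent.
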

\begin{proof}
  We start as in the proof of Lemma \ref{lem:psi-2}. According to \cite[Theorem 5]{MillerSamko}, we
  get for $z\geq0$ that $z^{s_0}e^zK_s(z)$ is a decreasing function. Consequently, having in mind
  the definition of $\psi_s$ and that $z^\nu K_\nu(z)>0$ for all $z\geq0$ and $\nu>-1$, see the
  proof of Lemma \ref{lem:psi-0}, we obtain
  \[
    \abs{z^{r} \psi_s(z)}=z^{r+s-s_0}e^{-z}\cdot c_s z^{s_0}e^zK_s(z)\leq z^{r+s-s_0}e^{-z}\cdot c_sa^{s_0}e^aK_s(a).
  \]
  This is already the desired result for $r=s_0-s$ noticing that $e^{-\frac{z}{2}}<1=c$ for $z\geq
  0$. For $r>s_0-s$, we observe that
  \[
    z^{r+s-s_0}e^{-z}=z^{r+s-s_0}e^{-\frac{z}2}e^{-\frac{z}2}\leq c e^{-\frac{z}2},
  \]
  where we used that
  \[
    \max_{z\geq 0}(z^{r+s-s_0} e^{-\frac{z}2})=\left(\frac{2(r+s-s_0)}{e}\right)^{r+s-s_0}=c.
  \]
  Combining the previous results yields the first inequality of the assertion. Next, we deduce by
  means of the definition of $\psi_s$ and \eqref{eq:SolutionRegularity_representation_baseCase}
  \[
    z^{r} \psi_s'(z)=\frac{c_s}{c_{1-s}}z^{r+2s-1}c_{1-s}z^{1-s}K_{1-s}(z)=\frac{c_s}{c_{1-s}}z^{r+2s-1}\psi_{1-s}(z)
  \]
  such that the second inequality of the assertion follows from the first one noting that
  $r+2s-1\ge s_0'-s+2s-1=\min(1-s,\frac12)-(1-s)$ by the assumption on $r$.
\end{proof}

\section*{Acknowledgement}

The authors acknowledge Christian Kahle's and Martin Stoll's support in the efficient solution of
the arising linear systems.

\bibliographystyle{abbrv}
\bibliography{lit}

\end{document}